\documentclass{scrartcl}
\usepackage[margin=3cm]{geometry}
\pdfoutput=1 
\usepackage[T1]{fontenc}
\addtokomafont{disposition}{\normalfont\bfseries}
\usepackage{opensans}
\usepackage[british]{babel}
\usepackage[
  maxnames = 4,
  maxalphanames = 4,
  style = alphabetic,
]{biblatex}

\usepackage{amsmath,amssymb}
\usepackage{macros}
\usepackage{style}
\usepackage{authblk}
\author{Jonathan Davies}
\affil{\small School of Mathematical Sciences, University of Nottingham,\\
University Park, Nottingham NG7 2RD, United Kingdom.\vspace{2mm}\\
\href{mailto:ppyjed@nottingham.ac.uk}{\texttt{ppyjed@nottingham.ac.uk}}}
\newcommand{\firstpagefoot}{
\,\,
\emph{Date:} 5 February 2026 \\
\emph{2020 Mathematics Subject Classification:} 18N10, 18A50, 18D20 \\
\emph{Key words and phrases:} 2-category, graded category, enriched category, module category
}
\date{}
\title{Categories graded by group homomorphisms}
\hypersetup{
 pdfauthor={},
 pdftitle={Categories graded by group homomorphisms},
 pdfkeywords={},
 pdfsubject={},
 pdfcreator={},
 pdflang={English}}
\begin{document}

\maketitle
\vspace*{-2em}
\begin{abstract}
We generalise to a group homomorphism \(\tau\) the \(\chi\)-graded categories of Sözer and Virelizier.
These are categories in which both morphisms and objects have compatible degrees.
We give a `half-enriched' Yoneda lemma, a structure theorem for semisimple \(\tau\)-graded categories, and an alternative picture of \(\tau\)-graded categories in terms of pseudofunctors into \(\Cat\).
\nofootnote{\firstpagefoot}
\end{abstract}

\setcounter{tocdepth}{2}
\tableofcontents
\section{Introduction}
\label{sec:orge2d15e9}

Homotopy quantum field theories (HQFTs, see \cite{tv2012}) generalise topological quantum field theories (TQFTs) by allowing manifolds to come equipped with maps to some fixed target space.
The well-known Turaev-Viro state-sum construction builds 3-dimensional TQFTs from spherical fusion categories, and more recent work has extended this construction to HQFTs over certain nice target spaces.
The extended constructions take source categories as follows, writing \(G\) for a group and \(\chi\) for a crossed module.
\begin{center}
\begin{tabular}{lcr}
\toprule
Target space & Source for the 3D state-sum construction & References\\
\midrule
contractible\footnotemark & spherical fusion categories & \cite{tv1992}, \cite{bw1996}\\
connected 1-type & \(G\)-graded spherical fusion categories & \cite{tv2012}\\
connected 2-type & \(\chi\)-graded spherical fusion categories & \cite{sv2023}.\\
\bottomrule
\end{tabular}
\end{center}\footnotetext[1]{\label{org3a50771}When the target space is contractible, HQFTs are the same as TQFTs.}
More examples of \(\chi\)-graded categories are given in \cite{sv2025}.
It is illustrative to remember that connected homotopy \(n\)-types correspond to \(n\)-groups under the homotopy hypothesis, and connected 2-types can be modelled by crossed modules (see Section 1.5 of \cite{haugseng2024}).
A crossed module is a group homomorphism together with a compatible group action, but \(\chi\)-graded categories only use this group action for the monoidal structure.

We do not consider any monoidal structure in this paper, so we are free to define \(\tau\)-graded categories for a group homomorphism \(\tau \colon H \to G\) (\cref{def:tgrad}).
Here \(H\) gives degrees to morphisms and \(G\) to objects, so that an ordinary \(G\)-graded category is just a \(\tau\)-graded category with \(H=1\).
When \(G=1\), \(\tau\)-grading is the same as enrichment over the monoidal category \(\Mod[R]_H\) of \(H\)-graded \(R\)-modules (for \(R\) a fixed non-zero commutative ring).
We also introduce \(\tau\)-graded functors and natural transformations in order to define a 2-category \(\Cat_{\tau}\) of \(\tau\)-graded categories.

The standard enriched Yoneda lemma (as in \cite{kelly2005}) requires a symmetric braiding on the base of enrichment, but \(\Mod[R]_H\) lacks any braiding at all unless \(H\) is abelian.
Therefore we prove from scratch the Yoneda-style \cref{thm:homgradyoneda}.
For each element of \(H\) we end up with a Yoneda embedding of the form
\begin{equation*}
\yo^a_X \colon Y \mapsto \bigoplus_{h \in H} \Hom^{ha} (X,Y) \,,
\end{equation*}
`half-enriched' so that each \(\yo^a_X\) is \(\Mod[R]_H\)-enriched but \(\yo_{(-)}^a\) is just \(R\)-linear.
We then define shift objects, which represent these embeddings with respect to one another.
Shift objects are additional structure on \(\tau\)-graded categories, but we later show in \cref{cor:additive-hom-graded-def,prop:sstau} that they are automatically present in all additive or semisimple \(\tau\)-graded categories (and hence also in \(\chi\)-graded fusion categories).
Shift objects give ways to view the entirety of a \(\tau\)-graded category from the perspective of its degree 1 morphisms, and will be used throughout the rest of the paper.

Now that we have introduced all the basic concepts, \cref{sec:semisimplicity} and \cref{sec:2eqv} are broadly independent of one another.
\Cref{sec:semisimplicity} deals with the special case of semisimple \(\tau\)-graded categories.
We recall their definition as in \cite{sv2023} and give an equivalent definition in terms of shifts.
The rest of the section works towards a structure theorem, giving a classification of semisimple \(\tau\)-graded categories.
In \cref{def:mtau}, we define a class of indecomposable semisimple \(\tau\)-graded categories of the form \(\cat{M}_{\tau}(L,\psi)\langle g \rangle\), where \(L\) is a subgroup of \(\ker(\tau)\), \(\psi\) is a normalised 2-cocycle \(H^2 \to \Hom_{\Set}(H/L,R^{\times})\) and \(g\) is an element of \(G\).

\begin{introtheorem}[\cref{thm:tgrad:struct}]
A \(\tau\)-graded category \(\cat{C}\) is semisimple if and only if there is a \(\tau\)-graded equivalence
\begin{equation*}
\cat{C} \eqv \bigboxplus_{i \in I} \cat{M}_{\tau}(L_i,\psi_i)\langle g_i \rangle
\end{equation*}
for some subgroups \(L_i \le \ker(\tau)\), 2-cochains \(\psi_i \colon H^2 \to \Hom_{\Set}(H/L_i,R^{\times})\) and elements \(g_i \in G\).
We can choose \((g_i)_{i \in I}\) to be the degrees of a repesentative collection of simple objects.
\label{thm:A}
\end{introtheorem}

\begin{introtheorem}[\cref{thm:mtau-equiv}]
Each \(\tau\)-graded equivalence
\begin{equation*}
\cat{M}_{\tau}(L,\psi)\langle g \rangle \to \cat{M}_{\tau}(L',\psi')\langle g' \rangle
\end{equation*}
corresponds to a choice of group element and normalised 1-cochain
\begin{equation*}
t \in \tau^{-1}(gg'^{-1})
\qquad
\gamma \colon H \to \Hom_{\Set}(H/L,R^{\times})
\end{equation*}
with \(L = tL't^{-1}\) and \(\psi = \psi'^t \cdot d\gamma\), writing \(\psi'^{t} (a,b) (hL') = \psi' (a,b) (thL')\).
Natural isomorphisms \(F_{t,\gamma} \tonat F_{s,\delta}\) require \(tL' = sL'\) and correspond to 0-cochains \(\eta \colon H/L \to R^{\times}\) with \(\gamma = \delta \cdot d\eta\).
\label{thm:B}
\end{introtheorem}
Both of these theorems are proved by constructing explicit \(\tau\)-graded equivalence functors.
We also detail how to construct the groups \(L_i\) and 2-cochains \(\psi_i\).
These results are reminiscent of those in \cite{ostrik2003} and \cite{natale2017} for semisimple module categories.
One can also compare the use of cochains in Theorem 8.3.7 of \cite{bl2004} for classifying the functors and natural transformations between 2-groups.
However, in that case the source and target are associated with two different pairs \((H,G) \to (H',G')\); here we have the same groups (mediated by \(\tau\)) and allow additional data to vary.

We investigate the relationship between module categories and \(\tau\)-graded categories in \cref{sec:2eqv}.
When a \(\tau\)-graded category has shifts, we show that that its subcategory of degree 1 morphisms becomes an \(H\)-module category.\footnote{Here as in \cite{egno2015}, an \(H\)-module category is an \(R\)-linear category \(\cat{C}\) together with a monoidal functor \(\Disc(H) \to \Aut(\cat{C})\), where \(\Disc(H)\) is \(H\) viewed as a discrete category.}
In the special case \(G=1\), we extend this to a 2-functor between the 2-category of \(H\)-module categories and the 2-category of \(\tau\)-graded categories with shifts (\cref{thm:2eqv-H}).

Each group homomorphism induces a canonical groupoid \(\cat{G}_{\tau}\) as in \cref{eg:tgrad:grpd}.
We introduce \(\tau\)-module categories, which are pseudofunctors into \(\Cat\) from the discrete categorification of \(\cat{G}_{\tau}\).
These extend \(H\)-module categories in the case \(G \neq 1\), and the final theorem is obtained by extending \cref{thm:2eqv-H}.

\begin{introtheorem}[\cref{thm:final2eqv}]
There is a 2-equivalence
\begin{equation*}
\Cat_{\tau}^{\shifts} \eqv \ModCat[H]_{\tau} \,,
\end{equation*}
where \(\Cat_{\tau}^{\shifts}\) is the 2-category of \(\tau\)-graded categories with shifts and \(\ModCat[H]_{\tau}\) is the 2-category of \(\tau\)-module categories.
\end{introtheorem}
This further restricts to a 2-equivalence between semisimple \(\tau\)-graded and \(\tau\)-module categories.
When \(G=1\), the 2-equivalence precisely translates between \cref{thm:A,thm:B} and the results of \cite{ostrik2003} and \cite{natale2017}.

When \(\chi=\tau\) is a crossed module, we \(\Cat_{\chi}\) admits a monoidal 2-category structure exhibiting the \(\chi\)-graded monoidal categories as algebra objects.
We will focus on this in a future paper.
\subsection{Conventions}
\label{sec:conventions}
Here we introduce some conventions and notation which will be used throughout the paper.
We fix a non-zero commutative ring \(R\), arbitrary groups \(G\) and \(H\), and a group homomorphism \(\tau \colon H \to G\).
Our conventions for enriched categories are from \cite{kelly2005}, and for 2-categories and pseudofunctors from \cite{jy2021} (in particular, we take 2-categories to be strict bicategories).

We call a category \defined{\(R\)-linear} if it is enriched over the category \(\Mod[R]\) of \(R\)-modules.
The \(R\)-linearisation of a set \(X\) is the free \(R\)-module \(RX\) whose elements are finite \(R\)-linear combinations of those in \(X\).
Inductively, the \(R\)-linearisation of an \(n\)-category \(\cat{C}\) is the category \(R\cat{C}\) with the same objects and each \(\Hom_{R\cat{C}}(X,Y) = R\Hom_{\cat{C}}(X,Y)\).

We call a category \defined{additive} if it is enriched over the category \(\Ab\) of abelian groups and has all finite direct sums.
For each category \(\cat{C}\), we can add formal finite direct sums to give the additive completion \(\cat{C}^{\oplus}\); if \(\cat{C}\) is already additive then we have \(\cat{C}^{\oplus} \eqv \cat{C}\).

For any \(n\)-category \(\cat{C}\) (including \(n=0\) for sets), we write
\begin{itemize}
\item \(\deloop\cat{C}\) for the delooping, that is the \((n+1)\)-category with a single object \(\bullet\) whose endomorphism category is \(\cat{C}\)
\item \(\Disc(\cat{C})\) for the discrete categorification, that is \(\cat{C}\) viewed as an \((n+1)\)-category with only trivial \((n+1)\)-morphisms.
\end{itemize}
In all categories, we assume the axiom of choice so that we can work with representatives of equivalence classes and so that functors are equivalences if and only if they are fully faithful and essentially surjective.
We use a double arrow (\(\tonat\)) to describe 2-morphisms inline or in a diagram of 1-morphisms; otherwise we always use a single arrow (\(\to\)).
We write \(\hof\) for horizontal composition (along 1-morphisms) and \(\of\) for vertical composition (along objects).
\section{Categories graded by a group homomorphism}
\label{sec:general}
We introduce the 2-category \(\Cat_{\tau}\) of \(\tau\)-graded categories along with various important sub-2-categories and properties.
We prove a Yoneda-style lemma for \(\Cat_{\tau}\) in \cref{sec:yoneda} and introduce shift objects in \cref{sec:shifts}.
Shift objects will remain a central idea throughout the rest of the paper.
\subsection{Definitions}
\label{sec:defs}
We recall from \cite{sv2023} the definitions of \(H\)-Hom-graded and \(G\)-graded categories.

\begin{itemize}
\item An \(R\)-linear category \(\cat{C}\) is \defined{\(H\)-Hom-graded} if it is enriched over the category \(\Mod[R]_H\) of \(H\)-graded \(R\)-modules.
That is, if each homspace is an \(H\)-graded \(R\)-module, each identity morphism is homogeneous of degree 1, and composition arises from \(R\)-linear maps
\begin{equation*}
\of \colon
\Hom_{\cat{C}}^{h'}(Y,Z) \otimes_R \Hom_{\cat{C}}^h(X,Y)
\to
\Hom_{\cat{C}}^{h'h}(X,Z)
\,.
\end{equation*}
We write \(\cat{C}^1\) for the \(R\)-linear subcategory containing only degree 1 morphisms, and we write \(X \iso Y\) in \(\cat{C}\) if and only if \(X \iso Y\) in \(\cat{C}^1\).

\item A category \(\cat{D}\) is \defined{\(G\)-graded} if there are disjoint subcategories \((\cat{D}_g)_{g \in G}\) such that each object \(X \in \cat{D}\) has a decomposition \(\bigoplus_{g \in G} X_g\) with finitely many non-zero components \(X_g \in \cat{D}_g\).
\end{itemize}

In either case, we write \(|\cdot|\) for the map sending homogeneous objects or morphisms to their degrees.
We write \(\Cat^H\) for 2-category of \(H\)-Hom-graded categories, following the standard construction for enriched categories in \cite{kelly2005}; in particular, \(H\)-Hom-graded functors preserve morphism degrees and \(H\)-Hom-graded natural transformations only have components of degree 1.
For \(\cat{C},\cat{D} \in \Cat^H\), we also write
\begin{align*}
\Fun^H(-,-) &= \Hom_{\Cat^H}(-,-) &
\Nat^H(-,-) &= \Hom_{\Fun^H(\cat{C},\cat{D})}(-,-) \,.
\end{align*}
We write \(\Cat_G\) for the 2-category of \(R\)-linear \(G\)-graded categories, where 1-morphisms preserve object degrees.

In \cite{sv2023}, Sözer and Virelizier define monoidal categories graded by crossed modules.
A crossed module is a group homomorphism with extra structure, necessary for the monoidal coherences in such categories.
In this paper, we do not consider any monoidal structure and hence it is sufficient to work over an ordinary group homomorphism.
The below definition of \(\tau\)-graded categories is precisely this generalisation.

\begin{definition}
Let \(\tau \colon H \to G\) be a group homomorphism.
An \(R\)-linear category \(\cat{C}\) is called \defined{\(\tau\)-graded} if
\begin{enumerate}
\item \(\cat{C}\) is \(H\)-Hom-graded
\item \(\cat{C}^1\) is \(G\)-graded
\item for \(X \in \cat{C}_x\) and \(Y \in \cat{C}_y\), we have \(\Hom_{\cat{C}}^h(X,Y) = 0\) unless \(y = \tau(h)x\).
\end{enumerate}
We write \(\Cat_{\tau}\) for the 2-category of \(\tau\)-graded categories, where
\begin{itemize}
\item \(\tau\)-graded functors are \(H\)-Hom-graded functors \(F\) satisfying \(|FX|=|X|\) for all homogeneous objects \(X\).
\item \(\tau\)-graded natural transformations are \(H\)-Hom-graded natural transformations with components of degree 1.
\end{itemize}
For \(\cat{C},\cat{D} \in \Cat^{\tau}\), we also write
\begin{align*}
\Fun_{\tau}(-,-)
&= \Hom_{\Cat_{\tau}}(-,-)
&
\Nat_{\tau}(-,-)
&= \Hom_{\Fun_{\tau}(\cat{C},\cat{D})}(-,-)
\,.
\end{align*}
\label{def:tgrad}
\end{definition}

\begin{definition}
We write \((-)^1\) for the strictly surjective 2-functor \(\Cat_{\tau} \toepi \Cat_G\) mapping
\begin{itemize}
\item each \(\tau\)-graded category \(\cat{C}^1\) to the \(R\)-linear subcategory \(\cat{C}^1\) containing only the degree 1 morphisms
\item 1- and 2-morphisms to the appropriate restrictions (denoted \(F \mapsto F^1\) and \(\eta \mapsto \eta^1\) respectively).
\end{itemize}
\end{definition}
\begin{remark}
There are canonical embeddings
\begin{equation*}
\Cat_G \tomono \Cat_{\tau} \otmono \Cat^H \,.
\end{equation*}
The embedding of \(\Cat_G\) adds only the zero morphisms of non-trivial degree. The 2-functor \((-)^1\) provides a retraction, giving a 2-equivalence when \(H=1\).
On the other hand, the embedding of \(\Cat^H\) assigns degree 1 to each object.
It has a retraction by forgetting object degrees, giving a 2-equivalence when \(G=1\).
\end{remark}

The following examples will appear frequently in later sections.
\Cref{eg:tgrad:grpd,eg:tgrad:modtau} follow from forgetting the monoidal structure in Examples 4.4 and 4.5 of \cite{sv2023}.
\Cref{eg:tgrad:cyclic} is new to this paper.

\begin{eg}[\(R\cat{G}_{\tau}\)]
Each group homomorphism \(\tau \colon H \to G\) induces a canonical groupoid \(\cat{G}_{\tau}\).
Objects are elements \(g \in G\), and morphisms are pairs \((h,g) \colon g \to \tau(h)g\) for \(g \in G, h \in H\).
The \(R\)-linearisation \(R\cat{G}_{\tau}\) is \(\tau\)-graded with each \(|g| = g\) and \(|(h,g)| = h\).
\label{eg:tgrad:grpd}
\end{eg}

\begin{eg}[\({\Mod[R]_{\tau}}\) and \({\Mod[R]_H^{\bullet}}\)]
Recall that we write \(\Mod[R]_H\) for the category of \(H\)-graded \(R\)-modules.
This has no braiding unless \(H\) is abelian.
It is however still biclosed monoidal, with right and left internal homs
\begin{align*}
[ M, N ]
&= \bigoplus_{h \in H} \{ f \in \Hom_R(M,N) \st f(M_a) \subseteq N_{ha} \ \forall a \in H \}
\\
\langle M, N \rangle
&= \bigoplus_{h \in H} \{ f \in \Hom_R(M,N) \st f(M_a) \subseteq N_{ah} \ \forall a \in H \}
\end{align*}
acting by the usual pre- and post-composition on morphisms.
For an arbitrary group homomorphism \(\tau \colon H \to G\), we define the \(\tau\)-graded category \(\Mod[R]_{\tau}\) of \(G\)-graded \(R\)-modules with
\begin{equation*}
\Hom_{\Mod[R]_{\tau}}^h (M,N)
= \{ f \in \Hom_R(M,N) \st
f(M_g) \subseteq N_{\tau(h)g} \ \forall g \in G
\} \,.
\end{equation*}
Writing \(\Mod[R]_H^{\bullet} = \Mod[R]_{\id[H]}\), we have \(\Hom_{\Mod[R]_H^{\bullet}}(M,N) = [M,N]\) so that we can consider \(\Mod[R]_H^{\bullet}\) to be the right-enrichment of \(\Mod[R]_H\) over itself.
We also have \(\Mod[R]_{\tau}^1 = \Mod[R]_H\).
\label{eg:tgrad:modtau}
\end{eg}

\begin{eg}[\(\tau \colon C_8 \to C_2\)]
Let \(H = C_8 = \langle x \rangle\) and \(G = C_2 = \langle y \rangle\).
There is a single non-trivial group homomorphism \(\tau \colon H \to G\), which acts by \(x \mapsto y\).
For such \(\tau\), we define four different \(\tau\)-graded categories in the table below.
Homspace components are assumed to be zero unless otherwise specified, and composition is by multiplication in \(R\).
\begin{center}
\begin{tabular}{c||c|c||c}
\toprule
Category & Objects & \(\Hom^{x^n}(a,b) = R\) when & \(\dim_R\big( \Hom(a,b) \big)\)\\
\midrule
\(\cat{C}_1\) & \(\{0,1,2,3,4,5,6,7\}\) & \(a+n \equiv b \pmod{8}\) & 1\\
\(\cat{C}_2\) & \(\{0,1,2,3\}\) & \(a+n \equiv b \pmod{4}\) & 2\\
\(\cat{C}_4\) & \(\{0,1\}\) & \(a+n \equiv b \pmod{2}\) & 4\\
\(\cat{C}_8\) & \(\{0\}\) & \(a+n = b\) & 8\\
\bottomrule
\end{tabular}
\end{center}
Each object \(a\) has degree \(|a| = \tau(x^a) = y^a\).
In any of the above categories, \(\Hom^{x^n}(a,b) \neq 0\) then implies the \(\tau\)-grading condition
\begin{equation*}
\tau(x^n) + |a|
= y^n + y^a
= y^{a+n}
= y^b
= |b|
\,.
\end{equation*}
The object grading is trivial for \(\cat{C}_8\) and the homspace grading is trivial for \(\cat{C}_1\).
The categories \(\cat{C}_2\) and \(\cat{C}_4\) lie somewhere in between.
\label{eg:tgrad:cyclic}
\end{eg}
\subsection{Yoneda lemma}
\label{sec:yoneda}
The standard \(\cat{V}\)-enriched Yoneda embedding as in \cite{kelly2005} is of the form
\begin{equation*}
\cat{C} \tomono \Hom_{\cat{V}\dash\Cat}(\cat{C}^{\op},\cat{V}) \,.
\end{equation*}
However, when we try to set \(\cat{V} = \Mod[R]_H\) we encounter a sequence of issues:
\begin{enumerate}
\item \(\cat{C}^{\op}\) is not well-defined without a braiding on \(\Mod[R]_H\)
\item \(\Fun^H(\cat{C},\Mod[R]_H)\) is not well-defined because \(\Mod[R]_H\) is not itself Hom-graded
\item \(\Fun^H(\cat{C},\Mod[R]_H^{\bullet})\) is only trivially \(H\)-Hom-graded, since Hom-graded natural transformations must have degree 1.
\end{enumerate}
We instead define `half-enriched' contravariant Yoneda embeddings of the form
\begin{equation*}
(\cat{C}^1)^{\op}
\tomono
\Fun^H(\cat{C},\Mod[R]_H^{\bullet})
\,.
\end{equation*}
In fact, we obtain a collection of Yoneda embeddings indexed over \(H\).
We use the Japanese symbol \(\yo\) (pronounced `yo') following the notation of \cite{js2017}.

\begin{proposition}
Let \(\cat{C}\) be an \(H\)-Hom-graded category.
Then for each \(a \in H\) and \(X \in \cat{C}\) there is an \(H\)-Hom-graded functor \(\yo_X^a \colon \cat{C} \to \Mod[R]_H^{\bullet}\) acting on objects and morphisms respectively by
\begin{equation*}
Y \mapsto \bigoplus_{h \in H} \Hom_{\cat{C}}^{ha}(X,Y) \qquad
y \mapsto y \of (-) \,.
\end{equation*}
Moreover, for each \(a \in H\) there is an \(R\)-linear functor \(\yo^a \colon (\cat{C}^1)^{\op} \to \Fun^H(\cat{C}, \Mod[R]_H^{\bullet})\)  mapping objects \(X \mapsto \yo_X^a\) and morphisms \(x \mapsto (-) \of x\).
\label{prop:def:homgradyoneda}
\end{proposition}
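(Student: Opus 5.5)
The plan is to verify everything directly from the definitions, degree by degree, since no earlier result does the work for us. First, I will check that \(\yo_X^a Y = \bigoplus_{h \in H} \Hom_{\cat{C}}^{ha}(X,Y)\) is an object of \(\Mod[R]_H^{\bullet}\). Here the summand \(\Hom^{ha}_{\cat{C}}(X,Y)\) sits in degree \(h\), so the \(H\)-grading on \(\Hom_{\cat{C}}(X,Y)\) is just reindexed by right translation by \(a^{-1}\). Next, for a homogeneous morphism \(y \in \Hom^{k}_{\cat{C}}(Y,Z)\), the map \(y \of (-)\) sends the degree \(h\) component \(\Hom^{ha}(X,Y)\) into \(\Hom^{kha}(X,Z)\), which is the degree \(kh\) component. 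By the description of the right internal hom \([M,N]\) in \cref{eg:tgrad:modtau}, this says that \(y \of (-)\) lies in the degree \(k\) part of \(\Hom_{\Mod[R]_H^{\bullet}}(\yo^a_X Y, \yo^a_X Z)\).

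Extending \(R\)-linearly, the assignment \(\Hom_{\cat{C}}(Y,Z) \to [\yo^a_X Y,\yo^a_X Z]\) is then a degree-preserving \(R\)-linear map. Functoriality (\(\id \mapsto \id\), \((z\of y)\of(-) = z \of (y \of (-))\)) is just associativity and unitality of composition in \(\cat{C}\). This is the point to be careful about: the grading uses the \emph{right} internal hom precisely because postcomposition multiplies degrees on the left. That left multiplication commutes with the right translation by \(a\), which is why the shift \(ha\) (rather than \(ah\)) is the correct choice.

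For the second part, take a degree 1 morphism \(x \colon X' \to X\) in \(\cat{C}^1\). Precomposition gives maps \((-) \of x \colon \yo^a_X Y \to \yo^a_{X'} Y\) for each \(Y\), and these send \(\Hom^{ha}(X,Y)\) to \(\Hom^{ha}(X',Y)\), hence have degree 1. This is exactly where we need \(x\) to have degree 1: a general degree \(k\) morphism would send \(\Hom^{ha}\) to \(\Hom^{hak}\), which is not a homogeneous map of any degree for the shifted grading unless \(k\) is central. I expect this to be the main conceptual obstacle, and it explains why only \(\cat{C}^1\) appears.

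Naturality in \(Y\) for an arbitrary homogeneous \(y\) is the associativity identity \(y \of (f \of x) = (y \of f) \of x\), so \((-)\of x\) is an \(H\)-Hom-graded natural transformation \(\yo^a_X \tonat \yo^a_{X'}\). Contravariant functoriality, namely \(\id \mapsto \id\) and \(x \of x' \mapsto ((-) \of x') \of ((-)\of x)\), and \(R\)-linearity in \(x\) again follow from associativity and bilinearity of composition. Together these make \(\yo^a \colon (\cat{C}^1)^{\op} \to \Fun^H(\cat{C},\Mod[R]_H^{\bullet})\) an \(R\)-linear functor.
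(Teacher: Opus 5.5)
Your proposal is correct and follows the paper's proof. The paper also checks that \(y \of (-)\) sends \((\yo_X^a Y)_h\) into \((\yo_X^a Y')_{kh}\), so it is a degree \(k\) morphism for the right internal hom, and then gets naturality of \((-) \of x\) and both functoriality statements from associativity and unitality of composition. Your explicit check that \((-) \of x\) has degree \(1\) components, which is where the restriction to \(\cat{C}^1\) is used, is a step the paper leaves implicit.
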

\begin{remark}
When \(a=1\) and \(\cat{C} = \Mod[R]_H\) (viewed as an \(H\)-Hom-graded category with only degree 1 morphisms), \(\yo^a\) is precisely the right internal hom-functor as in \cref{eg:tgrad:modtau}.
\end{remark}
\begin{proof}
Fix homogeneous morphisms
\begin{align*}
y &\in \Hom_{\cat{C}}^k(Y,Y') \qquad &
y' &\in \Hom_{\cat{C}}^{k'}(Y',Y'') \\
x &\in \Hom_{\cat{C}}^1(X,X') &
x' &\in \Hom_{\cat{C}}^1(X',X'')
\end{align*}
and \(f \in (\yo_X^a Y)_h = \Hom_{\cat{C}}^{ha}(X,Y)\).
Well-definedness of \(\yo_X^a\) is by
\begin{equation*}
(\yo_X^a y)(f)
= y \of f
\in \Hom_{\cat{C}}^{kha}(X,Y')
= (\yo_X^a Y')_{kh}
\end{equation*}
so that \(\yo_X^a y\) is indeed a degree \(k\) morphism \(\yo_X^a Y \to \yo_X^a Y'\).
Well-definedness of \(\yo^a\) follows from the associativity of composition, which gives the commuting naturality square
\begin{equation*}
\begin{tikzcd}[column sep=5em]
(\yo_X^a Y)_h \ar[r,"\yo_x^a = (-) \of x"] \ar[d,swap,"\yo_x^a y = y \of (-)"]
&
(\yo_{X'}^a Y)_h \ar[d,"\yo_{x'}^a y = y \of (-)"]
\\
(\yo_X^a Y')_{kh} \ar[r,swap,"\yo_x^a = (-) \of x"]
&
(\yo_{X'}^a Y')_{kh} \,.
\end{tikzcd}
\end{equation*}
Functoriality is then by
\begin{align*}
\yo_X^a \id[Y]
&= \id[Y] \of (-)
= \id[\yo_X^a Y]
&
\yo_{\id}^a
&= (-) \of \id
= \id
\\
\yo_X^a (y' \of y)
&= y' \of y \of (-)
= \yo_X^a y' \of \yo_X^a y
&
\yo_{x' \of x}^a
&= (-) \of x' \of x
= \yo_x^a \yo_{x'}^a
\,.\qedhere
\end{align*}
\end{proof}

\begin{theorem}[Yoneda lemma for Hom-graded categories]
Let \(\cat{C}\) be an \(H\)-Hom-graded category and fix \(a \in H\).
Then for each \(a \in H\), \(X \in \cat{C}\) and \(F \in \Fun^H(\cat{C},\Mod[R]_H^{\bullet})\) there is an isomorphism
\begin{equation*}
\Phi^a_{X,F}
\colon \Nat^H(\yo_X^a,F) \to (FX)_{a^{-1}}
\end{equation*}
in \(\Mod[R]\), where
\begin{itemize}
\item \(\Phi^a_{X,F}\) maps \(\eta \colon \yo^a_X \tonat F\) to \(\eta_X(\id[X])\)
\item \((\Phi^a_{X,F})^{-1}\) maps \(v \in (FX)_{a^{-1}}\) to the natural transformation with components
\begin{equation*}
(\Phi^a_{X,F})^{-1} (v)_Y \colon f \mapsto (Ff)(v) \,.
\end{equation*}
\end{itemize}
Moreover this is natural in \(X\) and \(F\), viewing the source and target as \(R\)-linear functors
\begin{equation*}
\cat{C}^1 \otimes_R \Fun^H(\cat{C}, \Mod[R]_H^{\bullet})
\to \Mod[R]
\,.
\end{equation*}
\label{thm:homgradyoneda}
\end{theorem}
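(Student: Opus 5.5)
The proof follows the classical Yoneda argument, with care taken over degrees. The key is that an \(H\)-Hom-graded natural transformation \(\eta \colon \yo_X^a \tonat F\) has components \(\eta_Y \colon \yo_X^a Y \to FY\) that are degree \(1\) morphisms in \(\Mod[R]_H^{\bullet}\). Such a component maps \((\yo_X^a Y)_h = \Hom^{ha}_{\cat{C}}(X,Y)\) into \((FY)_h\). Since \(\id[X] \in \Hom^1(X,X) = (\yo_X^a X)_{a^{-1}}\), we get \(\eta_X(\id[X]) \in (FX)_{a^{-1}}\), so \(\Phi^a_{X,F}\) is well defined. It is clearly \(R\)-linear, because \(\Nat^H\) carries the componentwise \(R\)-module structure.

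Next I check that the proposed inverse is well defined. Fix \(v \in (FX)_{a^{-1}}\) and a homogeneous \(f \in \Hom^{ha}(X,Y)\). Then \(Ff\) has degree \(ha\) in \(\Mod[R]_H^{\bullet}\), so \((Ff)(v) \in (FY)_{ha\cdot a^{-1}} = (FY)_h\). Extending \(R\)-linearly, the map \(f \mapsto (Ff)(v)\) is a degree \(1\) map \(\yo_X^a Y \to FY\). Naturality in \(Y\) reduces to functoriality of \(F\): for \(y \colon Y \to Y'\) of any degree \(k\), both paths around the square send \(f\) to \(F(y\of f)(v) = (Fy)(Ff)(v)\). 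This computation is the step needing the most care, because the naturality square involves morphisms \(\yo^a_X y\) and \(Fy\) of non-trivial degree \(k\). It still commutes in \(\Mod[R]_H^{\bullet}\), since Hom-graded naturality is required for all homogeneous \(y\).

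The two maps are mutually inverse. In one direction, \(\Phi\Phi^{-1}(v) = (F\id[X])(v) = v\). In the other, for \(\eta\) and \(f \in \Hom^{ha}(X,Y)\), naturality of \(\eta\) with respect to \(f\) gives
\begin{equation*}
\eta_Y(f) = \eta_Y\big((\yo_X^a f)(\id[X])\big) = (Ff)\big(\eta_X(\id[X])\big),
\end{equation*}
so \(\Phi^{-1}\Phi(\eta) = \eta\). Here it matters that \(\id[X]\) lies in the correct graded piece and that naturality of \(\eta\) holds with respect to the degree-\(ha\) morphism \(f\).

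Finally, naturality in \((X,F)\) is checked separately in each variable, since the functors out of \(\cat{C}^1 \otimes_R \Fun^H\) are determined by their actions in each variable.

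For \(x \colon X \to X'\) of degree \(1\), the source acts by precomposition with \(\yo^a_x\) from \cref{prop:def:homgradyoneda}. This is covariant in \(X\), because \(\yo^a\) is contravariant on \(\cat{C}^1\). The target acts by \(Fx\), restricted to the \(a^{-1}\) component, which is legitimate because \(Fx\) has degree \(1\). The required identity is
\begin{equation*}
(\eta\of\yo^a_x)_{X'}(\id[X']) = \eta_{X'}(x) = (Fx)\big(\eta_X(\id[X])\big),
\end{equation*}
which is again naturality of \(\eta\).

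For \(\theta \colon F \tonat F'\), we have \((\theta\of\eta)_X(\id[X]) = \theta_X(\eta_X(\id[X]))\), and \(\theta_X\) has degree \(1\), so it preserves the \(a^{-1}\) component.

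The identities in each variable together give a natural isomorphism of bifunctors.
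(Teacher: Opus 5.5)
Your proof is correct and follows essentially the same route as the paper. Both arguments do the following:
\begin{itemize}
\item check that the identity of \(X\) lies in the degree-\(a^{-1}\) component of \(\yo_X^a X\);
\item construct the explicit inverse \(v \mapsto (f \mapsto (Ff)(v))\) and verify its degree and its naturality in \(Y\);
\item use naturality of \(\eta\) against the degree-\(ha\) morphism \(f\) to show that the two maps are mutually inverse;
\item check naturality separately in \(X\) (for degree-1 \(x\)) and in \(F\).
\end{itemize}
You also state explicitly that Hom-graded naturality must hold for morphisms of every degree, which the paper uses without saying so.
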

\begin{proof}
For each \(X \in \cat{C}\) we have
\begin{equation*}
\id[X]
\in \Hom_{\cat{C}}^1(X,X)
= \Hom_{\cat{C}}^{a^{-1}a}(X,X)
= (\yo_X^a X)_{a^{-1}}
= (\yo_X^a X)_{a^{-1}}
\,.
\end{equation*}
Then for each \(\eta \in \Nat^H(\yo^a_X,F)\) we must also have \(\eta_X(\id[X]) \in (FX)_{a^{-1}}\), hence \(\Phi^a_{X,F}\) is well-defined.

Now fix \(v \in (FX)_{a^{-1}}\) and \(f \in (\yo_X^a Y)_h = \Hom_{\cat{C}}^{ha}(X,Y)\), and write \(\Psi(v)_Y(f) = (Ff)(v)\).
Since \(Ff \in \Hom_{\cat{C}}^{ha}(FX,FY)\), we have
\begin{equation*}
\Psi(v)_Y(f) \in (FY)_{haa^{-1}} = (FY)_h
\end{equation*}
so that \(\Psi(v)_Y\) is degree 1 as a morphism \(\yo_X^a Y \to FY\).
We have \(\Psi = (\Phi_{X,F}^a)^{-1}\) by
\begin{align*}
(\Phi^a_{X,F} \of \Psi)(v)
&= \Psi(v)_X(\id[X])
= (F \id[X])(v)
= \id[FX](v)
= v
\\
\big( (\Psi \of \Phi^a_{X,F})(\eta) \big)_Y (f)
&= (Ff \of \eta_X)(\id[X])
= (\eta_Y \of \yo_X^a f)(\id[X])
= \eta_Y(f)
\,,
\end{align*}
and \(\Psi(v)_Y\) is natural in \(Y\) by
\begin{equation*}
\big( \Psi(v)_{Y'} \of \yo_X^a y \big)(f)
= \Psi(v)_{Y'}(y \of f)
= (Fy \of Ff)(v)
= \big( Fy \of \Psi(v)_Y \big)(f)
\,.
\end{equation*}
Finally, for \(x \in \Hom_{\cat{C}}^1(X,X')\) and \(\alpha \colon F \tonat F'\) we have naturality of \(\Phi\) by
\begin{align*}
(Fx \of \Phi_{X,F}^a)(\eta)
&= (Fx \of \eta_X)(\id[X]) \\
&= (\eta_{X'} \of \yo_X^a x)(\id[X]) \\
&= \eta_{X'}(x) \\
&= (\eta \of \yo_x^a)_{X'}(\id[X']) \\
&= \Phi_{X',F}^a(\eta \of \yo_x^a) \\
&= (\Phi_{X',F}^a \of \Nat^H(\yo_x^a,F))(\eta)
\end{align*}
and
\begin{align*}
(\alpha_X \of \Phi^a_{X,F}) (\eta)
= (\alpha \of \eta)_X (\id[X])
= \Phi^a_{X,F'}(\alpha \of \eta)
= (\Phi^a_{X,F'} \of \Nat^H(\yo_X^a,\alpha))(\eta)
\,.
\end{align*}
\end{proof}

\begin{corollary}
Each \(\yo^a\) is fully faithful.
\end{corollary}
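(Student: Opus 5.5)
We deduce this from \cref{thm:homgradyoneda} in the usual way, by specialising \(F\) to another representable functor. Fix \(a \in H\) and objects \(X, X' \in \cat{C}\). The functor \(\yo^a\) acts on morphisms by the \(R\)-linear map
\begin{equation*}
\Hom_{\cat{C}}^1(X',X) \to \Nat^H(\yo_X^a,\yo_{X'}^a) \,, \qquad x \mapsto (-) \of x \,.
\end{equation*}
We must show that this map is a bijection. Note that the direction is reversed, because \(\yo^a\) is contravariant on \(\cat{C}^1\).

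Apply \cref{thm:homgradyoneda} with \(F = \yo_{X'}^a\). This gives an isomorphism
\begin{equation*}
\Phi^a_{X,\yo^a_{X'}} \colon \Nat^H(\yo_X^a,\yo_{X'}^a) \to (\yo_{X'}^a X)_{a^{-1}} = \Hom_{\cat{C}}^{a^{-1}a}(X',X) = \Hom_{\cat{C}}^1(X',X) \,.
\end{equation*}
The key step is to check that its inverse is exactly the action of \(\yo^a\) on morphisms. Take \(v \in \Hom_{\cat{C}}^1(X',X)\). By the explicit formula in the theorem, the component of \((\Phi^a_{X,\yo^a_{X'}})^{-1}(v)\) at \(Y\) sends \(f \in (\yo_X^a Y)_h\) to
\begin{equation*}
(\yo_{X'}^a f)(v) = f \of v \,.
\end{equation*}
So this natural transformation is precisely \(\yo^a_v = (-) \of v\). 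Hence the morphism map of \(\yo^a\) equals \((\Phi^a_{X,\yo^a_{X'}})^{-1}\), which is an isomorphism, and so \(\yo^a\) is fully faithful.

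The only point needing care is the degree bookkeeping. The \(a^{-1}\) component of \(\yo_{X'}^a X\) must come out as degree \(1\) morphisms, and this is exactly why \(\yo^a\) is defined on \(\cat{C}^1\) rather than on all of \(\cat{C}\). There is no real obstacle beyond this, since the naturality and inverse computations were already carried out in the proof of \cref{thm:homgradyoneda}.
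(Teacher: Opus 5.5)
Your proof is correct and is essentially the paper's argument. Both specialise \cref{thm:homgradyoneda} to \(F = \yo^a_{X'}\), identify \((\yo^a_{X'}X)_{a^{-1}} = \Hom^1_{\cat{C}}(X',X)\), and read off from the explicit inverse formula that \((\Phi^a_{X,\yo^a_{X'}})^{-1}(v) = (-) \of v = \yo^a_v\), so the morphism map of \(\yo^a\) is an isomorphism.
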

\begin{proof}
From \cref{thm:homgradyoneda}, we consider the \(R\)-linear isomorphism
\begin{equation*}
\Phi^a_{X, \yo^a_Y}
\colon \Nat^H(\yo^a_X, \yo^a_Y)
\to (\yo^a_Y X)_{a^{-1}}
= \Hom_{\cat{C}^1}(Y,X)
\,.
\end{equation*}
For morphisms \(v \in \Hom_{(\cat{C}^1)^{\op}}(X,Y) = \Hom_{\cat{C}^1}(Y,X)\) and \(f \in \Hom_{\cat{C}^1}(Y,Y')\), we have
\begin{equation*}
(\yo_v^a)_{Y'} f
= f \of v
= (\yo^a_Y f)(v)
= (\Phi^a_{X,\yo^a_Y})^{-1}(v)_{Y'} (f)
\,.
\end{equation*}
Since \((\Phi^a_{X,\yo^a_Y})^{-1}\) is invertible, \(\yo^a\) is bijective on homsets and hence fully faithful.
\end{proof}
\subsection{Shift objects}
\label{sec:shifts}
Intuitively, shift objects give a way for \(\cat{C}^1\) to `see' the rest of \(\cat{C}\).
This allows us to easily extend ordinary \(R\)-linear categorical properties such as additivity and semisimplicity.
We will also see in \cref{prop:shiftrepr} that shift objects represent the Hom-graded Yoneda embeddings of \cref{prop:def:homgradyoneda}.

\begin{definition}
Let \(X\) be an object in a \(\tau\)-graded category, and let \(a \in H\).
A \defined{shift} of \(X\) by \(a\), if it exists, is an object \(X\langle a \rangle\) together with a degree \(a\) isomorphism \(r_{X,a} \colon X \to X\langle a \rangle\).
\end{definition}
\begin{remark}
Shifts automatically induce \(R\)-linear isomorphisms
\begin{align*}
(-) \of r_{X,a} &\colon \Hom_{\cat{C}}^h(X\langle a \rangle,Y) \tonat \Hom_{\cat{C}}^{ha}(X,Y) \\
r_{Y,a} \of (-) &\colon \Hom_{\cat{C}}^h(X,Y) \tonat \Hom_{\cat{C}}^{ah}(X,Y\langle a \rangle) \,.
\end{align*}
\end{remark}
\begin{remark}
If \(X\) is homogeneous of degree \(x\), then it follows that any shift \(X\langle a \rangle\) must have degree \(\tau(a)x\).
Given another shift \(X[a]\) of \(X\) by \(a\), it is easy to construct a degree 1 isomorphism \(X[a] \to X\langle a \rangle\).
\label{rem:shiftdegree}
\end{remark}
\begin{proposition}
Let \(X\) and \(Y\) be objects in a \(\tau\)-graded category \(\cat{C}\), and fix \(a \in H\).
For \(b \in H\), observe that \(\yo_Y^b \iso \yo_X^{ba}\) is the same as saying
\begin{equation*}
\bigoplus_{h \in H} \Hom_{\cat{C}}^{hb}(Y,-) \iso \bigoplus_{h \in H} \Hom_{\cat{C}}^{hba}(X,-) \,.
\end{equation*}
The following are equivalent:
\begin{enumerate}
\item there is a degree \(a\) isomorphism \(X \to Y\) making \(Y\) a shift of \(X\)
\item we have \(\yo_Y^b \iso \yo_X^{ba}\) for some \(b \in H\)
\item we have \(\yo_Y^b \iso \yo_X^{ba}\) for all \(b \in H\).
\end{enumerate}
\label{prop:shiftrepr}
\end{proposition}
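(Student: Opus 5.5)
The plan is to prove (1)\(\Rightarrow\)(3)\(\Rightarrow\)(2)\(\Rightarrow\)(1). The step (3)\(\Rightarrow\)(2) is trivial, since \(H\) is non-empty. The other two steps use the Yoneda lemma \cref{thm:homgradyoneda} to turn natural transformations between the functors \(\yo\) into elements of homspaces.

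For (1)\(\Rightarrow\)(3), suppose \(r \colon X \to Y\) is a degree \(a\) isomorphism with inverse \(r^{-1}\) of degree \(a^{-1}\). Fix \(b \in H\). Precomposition gives maps \(\Hom^{hb}_{\cat{C}}(Y,Z) \to \Hom^{hba}_{\cat{C}}(X,Z)\), \(f \mapsto f \of r\), and these assemble into a degree 1 morphism \((\yo_Y^b Z)_h \to (\yo_X^{ba} Z)_h\) in \(\Mod[R]_H^{\bullet}\). Its inverse is precomposition with \(r^{-1}\). Naturality in \(Z\) is just associativity of composition, exactly as in the naturality square in the proof of \cref{prop:def:homgradyoneda}. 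The components have degree 1, so this is an isomorphism in \(\Fun^H(\cat{C},\Mod[R]_H^{\bullet})\). Note that \(r\) need not lie in \(\cat{C}^1\), so we cannot simply apply the functor \(\yo^b\) to it. The argument has to be run directly, but it is routine.

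For (2)\(\Rightarrow\)(1), let \(\eta \colon \yo_Y^b \tonat \yo_X^{ba}\) be an isomorphism with inverse \(\theta\). By \cref{thm:homgradyoneda}, \(\eta\) corresponds to
\begin{equation*}
r := \eta_Y(\id[Y]) \in (\yo_X^{ba}Y)_{b^{-1}} = \Hom^{b^{-1}ba}_{\cat{C}}(X,Y) = \Hom^a_{\cat{C}}(X,Y).
\end{equation*}
Similarly, \(\theta\) corresponds to
\begin{equation*}
s := \theta_X(\id[X]) \in (\yo_Y^b X)_{(ba)^{-1}} = \Hom^{a^{-1}b^{-1}b}_{\cat{C}}(Y,X) = \Hom^{a^{-1}}_{\cat{C}}(Y,X).
\end{equation*}
The inverse formula in \cref{thm:homgradyoneda} gives \(\eta_Z(f) = (\yo_X^{ba} f)(r) = f \of r\), so \(\eta\) is precomposition with \(r\); likewise \(\theta\) is precomposition with \(s\). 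Then \(\theta \of \eta = \id\) evaluated at \(\id[Y]\) yields \(r \of s = \id[Y]\). Similarly, \(\eta \of \theta = \id\) evaluated at \(\id[X]\) yields \(s \of r = \id[X]\). Hence \(r\) is a degree \(a\) isomorphism, and \(Y\) is a shift of \(X\) by \(a\).

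The only delicate point is the degree bookkeeping. One must check that \(\eta_Y(\id[Y])\) lands in the component of degree exactly \(a\), using that natural transformations in \(\Nat^H\) have degree 1 components, together with the order of multiplication in the non-abelian group \(H\). One must also check that evaluating the composites at identities is legitimate, that is, that \(\id[Y]\) lies in \((\yo_Y^b Y)_{b^{-1}}\).
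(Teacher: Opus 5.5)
Your proof is correct and follows the same route as the paper, which cites \cref{thm:homgradyoneda} in both directions to match natural isomorphisms \(\yo_Y^b \tonat \yo_X^{ba}\) with degree \(a\) isomorphisms in \((\yo_X^{ba} Y)_{b^{-1}} = \Hom_{\cat{C}}^a(X,Y)\). Your direct construction for (1)\(\Rightarrow\)(3) is just \((\Phi^b_{Y,\yo_X^{ba}})^{-1}(r)\) written out, and evaluating \(\theta \of \eta\) and \(\eta \of \theta\) at identities supplies a detail the paper leaves implicit: that the Yoneda bijection restricts to a bijection between natural isomorphisms and invertible morphisms.
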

\begin{proof}
Clearly (3) implies (2).
By the Hom-graded Yoneda lemma (\cref{thm:homgradyoneda}), we have for each \(b \in H\) a correspondence between \(H\)-Hom-graded natural isomorphisms \(\yo_Y^b \tonat \yo_X^{ha}\) and isomorphisms in
\begin{equation*}
(\yo_X^{ha} Y)_{b^{-1}} = \Hom_{\cat{C}^a}(X,Y) \,.
\end{equation*}
Therefore (2) implies (1) and, considering all \(b \in H\), (1) implies (3).
\end{proof}

\begin{definition}
A \(\tau\)-graded category \(\cat{C}\) \defined{has shifts} if for each \(X \in \cat{C}\) and \(a \in H\) there exists a shift object \(X\langle a \rangle \in \cat{C}\).
We write \(\Cat_{\tau}^{\shifts}\) for the sub-2-category of \(\Cat_{\tau}\) containing the \(\tau\)-graded categories with shifts.
\label{def:hasshifts}
\end{definition}
\begin{remark}
Let \(F \colon \cat{C} \to \cat{D}\) be a \(\tau\)-graded functor between \(\tau\)-graded categories with shifts.
We then automatically have degree 1 isomorphisms
\begin{equation*}
Fr_{X,a} \of r_{FX,a}^{-1}
\colon (FX)\langle a \rangle
\to F(X\langle a \rangle)
\,.
\end{equation*}
\end{remark}
\begin{remark}
For any \(\tau\)-graded category \(\cat{C}\), we can construct a \(\tau\)-graded category \(\cat{C}^{\shifts}\) by adding new objects \(X\langle h \rangle\) and morphisms \(r_{X,h} \in \Hom_{\cat{C}^{\shifts}}^h(X,X\langle h \rangle)\) for each \(X \in \cat{C}\) and \(h \in H\).
Since shifts are unique up to degree 1 isomorphism, we have \(\cat{C}^{\shifts} \eqv \cat{C}\) whenever \(\cat{C}\) already has shifts.
\end{remark}
\begin{eg}
Recall the \(\tau\)-graded category \(R\cat{G}_{\tau}\) from \cref{eg:tgrad:grpd}.
Each object \(g \in R\cat{G}_{\tau}\) has shifts given by
\begin{equation*}
g\langle a \rangle = \tau(a)g \qquad
r_{g,a} := (g,a)
\in \Hom_{R\cat{G}_{\tau}}\big( g, \tau(a)g \big)
\,.
\end{equation*}
\label{eg:shifts:grpd}
\end{eg}
\begin{eg}
Recall the \(\tau\)-graded category \(\Mod[R]_{\tau}\) from \cref{eg:tgrad:modtau}.
Each object \(M \in \Mod[R]_{\tau}\) is of the form \(M = \bigoplus_{g \in G} M_g\) for finitely many non-zero \(M_g \in \Mod[R]\).
We can then construct \(M\langle a \rangle = \bigoplus_{g \in G} M_{\tau(a)^{-1}g}\) so that each \(M\langle a \rangle_{\tau(a)g} = M_g\).
Shift isomorphisms \(r_{M,a}\) act by permuting components.
\label{eg:shifts:modtau}
\end{eg}
\begin{eg}
For \(H=C_8=\langle x \rangle\) and \(G=C_2=\langle y \rangle\), recall the \(\tau\)-graded categories \((\cat{C}_k)_{k=1,2,4,8}\) from \cref{eg:tgrad:cyclic}.
Each object \(a\) in \(\cat{C}_k\) has shifts given by
\begin{equation*}
a\langle x^n \rangle = a+n \qquad
r_{a,x^n} = 1 \in \Hom^{x^n}(a,a+n)
\end{equation*}
where addition in \(\ob(\cat{C}_k)\) is defined modulo \(k\).
\label{eg:shifts:cyclic}
\end{eg}
\section{Semisimplicity}
\label{sec:semisimplicity}
We begin by reviewing semisimplicity for \(R\)-linear categories.
Sözer and Virelizier use what we call \(R\)-linear semisimplicity, which often differs from the standard definition of semisimplicity for abelian categories.
We discuss both of these notions in turn and how they relate to each other, and we prove a structure theorem for \(R\)-linear semisimple categories (\cref{thm:objss}).
This structure theorem is well-known, but appears to lack a published proof or explicit statement.

In \cref{sec:additivity} we then discuss additivity for \(\tau\)-graded categories, which Sözer and Virelizier define in terms of what they call \(h\)-direct sums.
We show that their definition is equivalent to requiring shifts and only those \(h\)-direct sums for which \(h=1\).
In \cref{sec:sstau} we interpret semisimplicity for \(\tau\)-graded categories along similar lines.
Finally, we use this to prove a new structure theorem for semisimple \(\tau\)-graded categories in \cref{sec:struct}.
\subsection{Review of semisimplicity for \(R\)-linear categories}
\label{sec:rss}
There are multiple different definitions of semisimple category in the literature.
A helpful discussion on this topic can be found in \cite{bailie2017}, which suggests names for each type.
Abelian semisimplicity is used in \cite{egno2015} and is probably the most standard notion.

On the other hand, \cite{sv2023} uses what \cite{bailie2017} calls object semisimplicity; we instead call it \(R\)-linear semisimplicity to emphasise its dependence on the base ring \(R\).
We warn that these two notions of semisimplicity can be very different; in particular, \(R\)-linear `simple' objects need not be abelian simple even when \(R=\mathbb{C}\).
\subsubsection{Abelian semisimplicity}
\label{sec:org98c8998}

In an abelian category, recall that a \defined{subobject} of an object \(X\) is a monomorphism \(Y \tomono X\).

\begin{definition}
A non-zero object \(X\) in an abelian category is called \defined{abelian simple} if it has no non-trivial subobjects.
A category is called \defined{abelian semisimple} if it is abelian and every object is a finite direct sum of abelian simple objects.
\label{def:abss}
\end{definition}

Schur's lemma for abelian categories is well-known --- see, for example, Lemma 1.5.2 of \cite{egno2015}.
We will later use it to give sufficient conditions for an \(R\)-linear semisimple category to be abelian semisimple.

\begin{lemma}[Schur's lemma]
Let \(\cat{C}\) be an abelian category with abelian simple objects \(S\) and \(T\).
Then \(\End_{\cat{C}}(S)\) is a division ring and every morphism \(S \to T\) is either zero or an isomorphism.
If we have further that \(\cat{C}\) is hom-finite \(\field\)-linear over an algebraically closed field \(\field\), then every abelian simple object \(S \in \cat{C}\) has \(\End_{\cat{C}}(S) \iso \field\).
\qed
\label{lem:schur}
\end{lemma}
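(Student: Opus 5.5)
The plan is the standard kernel/image argument, using only the axioms of an abelian category and the definition of abelian simplicity (\cref{def:abss}). I interpret ``no non-trivial subobjects'' as: every monomorphism into the object is either zero or an isomorphism.

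First I will show that a non-zero morphism \(f \colon S \to T\) between abelian simple objects is an isomorphism.
\begin{itemize}
\item Consider the kernel \(\ker(f) \tomono S\). Since \(S\) is abelian simple, this monomorphism is either zero or an isomorphism.
\item If it were an isomorphism then \(f\) would be zero, so \(\ker(f) = 0\) and \(f\) is a monomorphism.
\item Next use the image factorisation \(S \to \operatorname{im}(f) \tomono T\), which exists in any abelian category. The inclusion \(\operatorname{im}(f) \tomono T\) is a subobject of \(T\), so it is zero or an isomorphism.
\item It cannot be zero, since \(f \neq 0\) factors through it. Hence \(f\) is an epimorphism.
\item In an abelian category a morphism that is both monic and epic is an isomorphism.
\end{itemize}
This step is the only place the abelian structure is really used, so I expect it to be the main technical point: one must make sure the image factorisation and the implication ``mono and epi \(\Rightarrow\) iso'' are available.

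Taking \(T = S\) gives the first claim. \(\End_{\cat{C}}(S)\) is a ring under composition, and it is non-zero because \(\id[S] \neq 0\), as \(S \neq 0\). By the previous step every non-zero element is invertible, so \(\End_{\cat{C}}(S)\) is a division ring.

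For the last claim, suppose \(\cat{C}\) is hom-finite and \(\field\)-linear with \(\field\) algebraically closed. Then \(D = \End_{\cat{C}}(S)\) is a non-zero finite-dimensional \(\field\)-algebra which is a division ring, and the unit map \(\field \to D\), \(\lambda \mapsto \lambda\,\id[S]\), is injective.
\begin{itemize}
\item Fix \(\phi \in D\). Left multiplication \(L_\phi \colon D \to D\) is a \(\field\)-linear endomorphism of a non-zero finite-dimensional vector space.
\item Since \(\field\) is algebraically closed, \(L_\phi\) has an eigenvalue \(\lambda\). So there is a non-zero \(\psi \in D\) with \((\phi - \lambda\,\id[S]) \of \psi = 0\).
\item Because \(D\) is a division ring, \(\psi\) is invertible, which forces \(\phi = \lambda\,\id[S]\).
\end{itemize}
Therefore the unit map is surjective and \(\End_{\cat{C}}(S) \iso \field\).
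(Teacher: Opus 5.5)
Your proof is correct. The kernel/image argument shows that a non-zero map between abelian simples is an isomorphism, so the endomorphism ring of \(S\) is a division ring, which is non-zero because \(S \neq 0\). Your eigenvalue argument for left multiplication then shows that a finite-dimensional division algebra over an algebraically closed field is the field itself.

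The paper gives no proof of its own and just cites Lemma 1.5.2 of \cite{egno2015}, whose proof is this same standard argument, so your approach matches what the paper relies on.
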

\begin{corollary}
Let \(\cat{C}\) be an \(R\)-linear abelian category.
Suppose there is an abelian simple object \(S \in \cat{C}\) with \(\End_{\cat{C}}(S) \iso R\).
Then \(R\) is a field.
\label{cor:abs-obs-implies-field}
\end{corollary}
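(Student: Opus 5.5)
By Schur's lemma (\cref{lem:schur}), \(\End_{\cat{C}}(S)\) is a division ring, so the plan is to transport this property to \(R\) along the given isomorphism \(\End_{\cat{C}}(S) \iso R\). Two points need care. First, we must check that the isomorphism can be taken to be one of rings. Second, we must check that \(R\) is non-zero, so that "division ring" really does mean "field".

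For the first point, I read the hypothesis in the natural way: \(R \to \End_{\cat{C}}(S)\), \(\lambda \mapsto \lambda \id[S]\), is a bijection. Because \(\cat{C}\) is \(R\)-linear, composition is \(R\)-bilinear. Hence \((\lambda\id[S]) \of (\mu \id[S]) = \lambda\mu \id[S]\), and this map is a unital ring homomorphism. It follows that it is a ring isomorphism. If the hypothesis is instead only an abstract ring isomorphism, the argument goes through unchanged, since being a field is invariant under ring isomorphism. In either case \(\End_{\cat{C}}(S)\) is commutative, because \(R\) is.

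For the second point, \(S\) is abelian simple, so it is non-zero by \cref{def:abss}. Therefore \(\id[S] \neq 0\), and \(\End_{\cat{C}}(S)\) is a non-zero ring. This matches the paper's standing assumption that \(R\) is non-zero.

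Now fix a non-zero \(\lambda \in R\). It corresponds to a non-zero endomorphism \(\lambda \id[S]\), which is invertible by Schur's lemma. Its inverse has the form \(\mu\id[S]\) for some \(\mu \in R\), and injectivity of \(\lambda \mapsto \lambda\id[S]\) gives \(\lambda\mu = 1\). So \(R\) is a commutative ring in which every non-zero element is invertible, that is, a field.

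None of these steps is a serious obstacle. The only place to take care is the identification in the first point: the isomorphism must be compatible with the ring structure, not merely an isomorphism of \(R\)-modules. The argument above uses \(R\)-linearity of composition to show this compatibility holds.
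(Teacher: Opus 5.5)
Your proposal is correct and follows the paper's argument: Schur's lemma makes \(\End_{\cat{C}}(S)\) a division ring, and since it is isomorphic to the commutative ring \(R\), \(R\) is a field. Your check that \(\lambda \mapsto \lambda\id[S]\) is a ring isomorphism, using \(R\)-bilinearity of composition, makes explicit what the paper's one-line proof leaves implicit.
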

\begin{proof}
Schur's lemma (\cref{lem:schur}) makes \(\End_{\cat{C}}(S)\) a division ring, but a commutative division ring is just a field.
\end{proof}

\begin{eg}[\({\FFGMod[R]}\) if and only if \(R\) is a field]
All objects in \(\FFGMod[R]\) are of the form \(R^n\) for some \(n \in \mathbb{N}\).
If \(n=0\) then \(R^n=0\).
If \(n>1\) then \(R^n\) has a subobject \(R \tomono R^n\).
Now suppose \(\FFGMod[R]\) is abelian semisimple.
Then it has at least one abelian simple object, which by elimination can be identified with \(R\).
Since \(\Hom_{\FFGMod[R]}(R) = R\), \cref{cor:abs-obs-implies-field} makes \(R\) a field.

Conversely, if \(R=\field\) is a field then \(\FFGMod[R]=\Vect[\field]\) is abelian semisimple with standard matrix kernels and cokernels.
\label{eg:rmod-abss}
\end{eg}
\subsubsection{\(R\)-linear semisimplicity}
\label{sec:org1a13df0}

We call an \(R\)-linear category \defined{additive} if it has all finite direct sums.
Recall that objects \(X\) and \(Y\) in an \(R\)-linear category are called \defined{disjoint} if we have
\begin{equation*}
\Hom(X,Y) = \Hom(Y,X) = 0 \,.
\end{equation*}

\begin{definition}
An object \(X\) in an \(R\)-linear category is called \defined{\(R\)-linear simple} if it satisfies \(\End(X) \iso R\).
An \(R\)-linear category \(\cat{C}\) is called \defined{\(R\)-linear semisimple} if it is additive and there is a collection \(S\) of mutually disjoint \(R\)-linear simple objects such that each object in \(\cat{C}\) is a finite direct sum of objects in \(S\).
\label{def:objss}
\end{definition}
\begin{eg}[\({\FFGMod[R]}\)]
An \(R\)-module \(M\) is free if and only if \(M \iso \bigoplus_{i \in I} R\) for some set \(I\), in which case \(M\) is also finitely generated if and only if \(I\) is finite.
We also have \(\End(R) \iso R\), so that \(\FFGMod[R]\) is \(R\)-linear semisimple with a single linear simple object \(R\).
\label{eg:objss:modffg}
\end{eg}

\begin{proposition}
Let \(\cat{C}\) be an \(R\)-linear category, and suppose \(\cat{C}\) is abelian semisimple.
If \(\cat{C}\) is hom-finite and \(R\) is an algebraically closed field, then \(\cat{C}\) is also \(R\)-linear semisimple.
\end{proposition}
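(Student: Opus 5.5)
The plan is to check the conditions of \cref{def:objss} one at a time, taking as given the abelian semisimple structure and Schur's lemma (\cref{lem:schur}). First, additivity: an abelian category is in particular additive, so it has all finite direct sums, and the \(R\)-linear structure is already given. Second, take \(S\) to be a set of representatives of the isomorphism classes of abelian simple objects in \(\cat{C}\). Since \(\cat{C}\) is abelian semisimple, every object is a finite direct sum of abelian simple objects, and each of these is isomorphic to a member of \(S\). Hence every object is isomorphic to a finite direct sum of objects of \(S\). I will interpret ``is a finite direct sum'' up to isomorphism, as the paper does throughout.

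Next I show that each \(T \in S\) is \(R\)-linear simple. Because \(\cat{C}\) is hom-finite and \(R\)-linear over the algebraically closed field \(R=\field\), the final clause of \cref{lem:schur} gives \(\End_{\cat{C}}(T) \iso \field = R\). This is exactly the condition \(\End(T)\iso R\) required in \cref{def:objss}. Some care is needed about what ``\(\iso R\)'' means, namely as \(R\)-algebras or as \(R\)-modules. Either reading is fine here, because the Schur isomorphism sends \(\lambda \in \field\) to \(\lambda\,\id[T]\) and so respects both structures.

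It remains to prove mutual disjointness, and this is the step needing the most care. Take distinct \(T, T' \in S\); they are non-isomorphic because \(S\) contains one representative per isomorphism class. By \cref{lem:schur}, every morphism \(T \to T'\) is either zero or an isomorphism. An isomorphism is impossible, so \(\Hom(T,T')=0\), and by symmetry \(\Hom(T',T)=0\). Thus the members of \(S\) are mutually disjoint, and all conditions of \cref{def:objss} hold.

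The only potential obstacle is set-theoretic: choosing representatives of isomorphism classes. This is covered by the paper's standing assumption of the axiom of choice; if \(\cat{C}\) is not small one takes a class rather than a set of representatives, and \cref{def:objss} only speaks of a ``collection''. Otherwise the argument is a direct application of Schur's lemma.
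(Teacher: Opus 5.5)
Your proposal is correct and follows the same route as the paper, whose entire proof is the remark that the result follows directly from Schur's lemma (\cref{lem:schur}). You have simply spelled out the three checks the paper leaves implicit: additivity from abelianness, \(\End(T) \iso R\) from the algebraically closed hom-finite clause, and disjointness of non-isomorphic simples from the zero-or-isomorphism clause.
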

\begin{proof}
This follows directly from Schur's lemma (\cref{lem:schur}).
\end{proof}

\begin{definition}
The direct sum of additive categories \((\cat{C}_i)_{i \in I}\) is given by the additive category \(\bigboxplus_{i \in I} \cat{C}_i\) whose objects are formal lists  \(\bigboxplus_{i \in I} X_i\) for finitely many non-zero \(X_i \in \cat{C}_i\), and whose morphisms are given by
\begin{equation*}
\Hom_{\bigboxplus_{i \in I} \cat{C}_i}\Big( \bigboxplus_{i \in I} X_i, \bigboxplus_{i \in I} Y_i \Big)
= \bigboxplus_{i \in I} \Hom_{\cat{C}_i}(X_i, Y_i)
\,.
\end{equation*}
\label{def:dirsum-addcat}
\end{definition}

Clearly this direct sum preserves both \(R\)-linear and abelian semisimplicity.
An object \(X \in \bigboxplus_{i \in I} \cat{C}_i\) is (\(R\)-linear or abelian) simple if and only if it has a single non-zero component \(X_i\) which is (\(R\)-linear or abelian) simple in \(\cat{C}_i\).
The below structure theorem is well known, but appears to lack a published proof or explicit statement.

\begin{theorem}
A category \(\cat{C}\) is \(R\)-linear semisimple if and only if there is an \(R\)-linear equivalence
\begin{equation*}
\cat{C} \eqv \bigboxplus_{i \in I} \FFGMod[R] \,.
\end{equation*}
Furthermore, \(I\) is in bijection with the linear simple objects of \(\cat{C}\) up to isomorphism.
\label{thm:objss}
\end{theorem}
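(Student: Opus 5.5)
We prove the two directions separately, with the reverse direction being the easy one. For the reverse direction, observe that \(\FFGMod[R]\) is \(R\)-linear semisimple with the single simple object \(R\) (\cref{eg:objss:modffg}). Hence \(\bigboxplus_{i \in I} \FFGMod[R]\) is \(R\)-linear semisimple, with simple objects the lists having a single non-zero component \(R\) in position \(i\). These are mutually disjoint for distinct \(i\) by \cref{def:dirsum-addcat}. An \(R\)-linear equivalence preserves endomorphism rings, disjointness and finite direct sums, so \(\cat{C}\) is \(R\)-linear semisimple.

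For the forward direction, fix a collection \(S\) as in \cref{def:objss}. Choose one representative \(S_i\) for each isomorphism class of objects in \(S\), indexed by \(i \in I\). I will construct an \(R\)-linear functor
\begin{equation*}
F \colon \bigboxplus_{i \in I} \FFGMod[R] \to \cat{C}
\end{equation*}
and show that it is fully faithful and essentially surjective. Since every object of the source is isomorphic to a list \(\bigboxplus_i R^{n_i}\), it suffices to work on the full subcategory of such lists. Choosing a direct sum in \(\cat{C}\), set \(F(\bigboxplus_i R^{n_i}) = \bigoplus_i S_i^{\oplus n_i}\). Morphisms \(R^{n_i} \to R^{m_i}\) are \(m_i \times n_i\) matrices over \(R\). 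Such a matrix is sent to the corresponding matrix of morphisms built from the inclusions and projections of the chosen direct sums, using the fixed isomorphism \(\End(S_i) \iso R\) (normalised so that \(1 \mapsto \id[S_i]\)). Functoriality follows from the biproduct identities and \(R\)-linearity of composition. Essential surjectivity holds because every object of \(\cat{C}\) is a finite direct sum of objects in \(S\), each of which is isomorphic to some \(S_i\).

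The key step is full faithfulness. Additivity of homs in each variable reduces it to showing that
\begin{equation*}
\Hom_{\cat{C}}(S_i,S_j) \iso \begin{cases} R & i=j \\ 0 & i \neq j \end{cases}
\end{equation*}
compatibly with \(F\). The case \(i = j\) is the definition of \(R\)-linear simple. The case \(i \neq j\) is the main obstacle: \(S_i\) and \(S_j\) are representatives of different isomorphism classes, but they may have come from elements of \(S\) that are not themselves disjoint. To handle this, I would first argue that we may assume distinct elements of \(S\) are non-isomorphic, by discarding duplicates while keeping one element from each class. Then \(S_i\) and \(S_j\) are distinct elements of \(S\) and so are disjoint by hypothesis. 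If instead \(S\) genuinely contains two isomorphic objects \(X \iso Y\), then \(\Hom(X,Y) \ne 0\), so they are not disjoint. This means the elements of \(S\) are automatically pairwise non-isomorphic, and the choice of representatives is harmless.

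Finally, to identify \(I\) with the simple objects of \(\cat{C}\) up to isomorphism, transport the question along the equivalence. A list in \(\bigboxplus_i \FFGMod[R]\) has endomorphism ring \(\prod_i M_{n_i}(R)\). This is isomorphic to \(R\) as an \(R\)-algebra exactly when a single \(n_i = 1\) and all other \(n_j = 0\), using that \(R\) is non-zero and commutative. For example, \(M_n(R)\) is non-commutative for \(n > 1\), and a product of two non-zero rings has non-trivial idempotents, unlike \(R\) in this setting. I would verify this carefully, since a commutative ring can itself have idempotents. If it fails for general \(R\), I would instead restrict the bijection claim to the objects of \(S\). Under the equivalence, those correspond exactly to the \(R\) in position \(i\), and these are pairwise non-isomorphic as \(i\) varies.
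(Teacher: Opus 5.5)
Your proof of the equivalence itself follows the paper's route. You use the matrix functor \(\bigboxplus_{i \in I} \FFGMod[R] \to \cat{C}\), \(\bigboxplus_i R^{n_i} \mapsto \bigoplus_i S_i^{\oplus n_i}\), get full faithfulness from \(\Hom(S_i,S_j)\) being \(R\) or \(0\), essential surjectivity from semisimplicity, and the converse from semisimplicity of the direct sum. Your observation that distinct elements of \(S\) are automatically non-isomorphic is correct: an isomorphism \(X \to Y\) with \(\End(X) \iso R \neq 0\) is non-zero, which contradicts disjointness. Indexing by \(S\) is in fact more careful than the paper, which indexes by all simple objects up to isomorphism without checking that these are mutually disjoint. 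One line is missing where you normalise \(\End(S_i) \iso R\) so that \(1 \mapsto \id[S_i]\). If \(e\) is a free generator, write \(\id[S_i] = ue\) and \(e \of e = ce\). Then \(e = \id[S_i] \of e = uce\) forces \(uc = 1\), so \(\id[S_i]\) is also a free generator.

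The genuine gap is the final clause. Your idempotent argument fails, as you suspected. For \(R = \field \times \field\), both \(R\) and \(R \times R\) have non-trivial idempotents. For \(R = \prod_{\mathbb{N}} \field\) one even has \(R \times R \iso R\) as rings, so no purely ring-theoretic invariant will do. Falling back to a bijection with \(S\) alone would prove a weaker statement, since it leaves open that \(\cat{C}\) has simple objects not isomorphic to any element of \(S\).

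The missing idea is a rank count using the \(R\)-module structure. The ring \(\End(\bigboxplus_i R^{n_i}) = \prod_i M_{n_i}(R)\) is a free \(R\)-module of rank \(\sum_i n_i^2\). A non-zero commutative ring has invariant basis number (apply \(R/\mathfrak{m} \otimes_R (-)\) for a maximal ideal \(\mathfrak{m}\)). So this endomorphism ring is isomorphic to \(R\), whether as an \(R\)-module or as an \(R\)-algebra, exactly when \(\sum_i n_i^2 = 1\). Transporting along the equivalence, every \(R\)-linear simple object of \(\cat{C}\) is isomorphic to exactly one \(S_i\). This gives the bijection with \(I\), and also justifies the paper's indexing.
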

\begin{proof}
Suppose \(\cat{C}\) is \(R\)-linear semisimple with simple objects \((S_i)_{i \in I}\) up to isomorphism.
We can view morphisms \(f \in \Hom_{\cat{C}}(X,Y)\) as matrices with components \(f_{ji} \id[S_i] \in \Hom_{\cat{C}}(S_i,S_j)\).
We can then define a fully faithful functor \(\bigboxplus_{i \in I} \FFGMod[R] \to \cat{C}\) acting on objects and morphisms respectively by
\begin{align*}
\bigboxplus_{i \in I} R^{n_i}
&\mapsto
\bigoplus_{i \in I} S_i^{\oplus n_i}
&
\bigboxplus_{i \in I} (f_{ijk})_{j,k \in I}
&\mapsto
\bigboxplus_{i \in I} (f_{ijk} \id[S_i])_{j,k \in I}
\,.
\end{align*}
Essential surjectivity follows directly from the \(R\)-linear semisimplicity of \(\cat{C}\).
The converse follows from the \(R\)-linear semisimplicity of \(\bigboxplus_{i \in I} \FFGMod[R]\).
\end{proof}
\begin{corollary}
Let \(\cat{C}\) be an \(R\)-linear semisimple category.
Then \(\cat{C}\) is abelian semisimple if and only if \(R\) is a field.
\end{corollary}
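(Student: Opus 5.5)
The plan is to use \cref{thm:objss} to reduce both directions to a statement about the single category \(\FFGMod[R]\), and then apply \cref{eg:rmod-abss}.

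For the backward direction, suppose \(R\) is a field. By \cref{thm:objss} there is an \(R\)-linear equivalence \(\cat{C} \eqv \bigboxplus_{i \in I} \FFGMod[R]\). By \cref{eg:rmod-abss}, \(\FFGMod[R] = \Vect[\field]\) is abelian semisimple. Direct sums of additive categories preserve abelian semisimplicity, since kernels, cokernels and the abelian axioms can all be checked componentwise. Abelian semisimplicity is also invariant under equivalence of additive categories, because being abelian and the notion of abelian simple object are both preserved by equivalences. Therefore \(\cat{C}\) is abelian semisimple.

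For the forward direction, suppose \(\cat{C}\) is abelian semisimple. I would argue via \cref{cor:abs-obs-implies-field}: it suffices to find an abelian simple object \(S\) with \(\End(S) \iso R\). Fix an \(R\)-linear simple object \(T\) of \(\cat{C}\), which has \(\End(T) \iso R\). Using the equivalence from \cref{thm:objss}, \(T\) corresponds to \(R\) placed in one component \(i\). The full subcategory of \(\cat{C}\) on objects \(T^{\oplus n}\) is then identified with that component \(\FFGMod[R]\).

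This component is a direct summand of \(\cat{C}\): there are no nonzero morphisms between different components. Hence it inherits abelian semisimplicity. Concretely, kernels and cokernels in \(\cat{C}\) of maps between objects of component \(i\) decompose by components, and the components outside \(i\) vanish. So \(\FFGMod[R]\) is abelian semisimple, and \cref{eg:rmod-abss} gives that \(R\) is a field.

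A degenerate case needs a remark. If \(I\) is empty, then \(\cat{C}\) is zero and the forward direction gives no information. I would either note that the statement implicitly assumes \(\cat{C}\) is non-zero, or observe that \(\cat{C}\) has at least one simple object.

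The main obstacle is making rigorous the claim that abelian semisimplicity passes to and from a direct summand \(\bigboxplus\) component. I would handle it as follows. A morphism in \(\bigboxplus_{i} \cat{C}_i\) is a tuple of component morphisms, and a kernel of \((f_i)_i\) is given by the tuple of kernels \((\ker f_i)_i\). Monomorphisms are componentwise, so subobjects of an object concentrated in one component stay in that component. This is enough to transfer both abelianness and simplicity.
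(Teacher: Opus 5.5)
Your proposal is correct and takes essentially the paper's route: the paper's proof simply combines \cref{thm:objss} with \cref{eg:rmod-abss}. You additionally spell out the routine steps the paper leaves implicit, namely that abelian semisimplicity transfers across equivalences and passes to and from \(\boxplus\)-components. Your remark on \(I = \emptyset\) identifies a genuine gap in the statement that the paper overlooks: the zero category is both \(R\)-linear and abelian semisimple for every \(R\), so the forward direction needs \(\cat{C} \neq 0\) as an explicit hypothesis, and this is not something that can be observed from the other assumptions.
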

\begin{proof}
We showed in \cref{eg:rmod-abss} that \(\FFGMod[R]\) is abelian semisimple if and only if \(R\) is a field.
\end{proof}
\subsection{Direct sums and additivity for \texorpdfstring{\(\tau\)}{τ}-graded categories}
\label{sec:additivity}
In order to discuss semisimple \(\tau\)-graded categories, we first need to fix a notion of additivity.
In Section 3.4 of \cite{sv2023}, Sözer and Virelizier define what they call \(H\)-additivity, using \(h\)-direct sums for \(h \in H\).
This generalises immediately to the \(\tau\)-graded case:

\begin{definition}
Let \((X_i)_{i \in I}\) be a collection of objects in a \(\tau\)-graded category, and let \(h \in H\).
An \defined{\(h\)-direct sum} of \((X_i)_{i \in I}\) is an object \(\gbigoplus{h}_{i \in I} X_i\) together with morphisms
\begin{align*}
\pi_i &\in \Hom_{\cat{C}}^{h^{-1}} \Big( \gbigoplus{h}_{i \in I} X_i, X_i \Big)
&
\iota_i &\in \Hom_{\cat{C}}^h \Big( X_i, \gbigoplus{h}_{i \in I} X_i \Big)
\end{align*}
such that \(\sum_{i \in I} \iota_i \of \pi_i = \id[\gbigoplus{h}_{i \in I} X_i]\) and \(\pi_i \of \iota_j = \delta_{ij} \id[X_i]\) for all \(i,j \in I\).
We omit \(h\) when \(h=1\).
\end{definition}
\begin{definition}
A \(\tau\)-graded category is called \defined{\(H\)-additive} if it has all finite \(h\)-direct sums for all \(h \in H\).
\end{definition}

Sözer and Virelizier remark that an \(h\)-direct sum, when it exists, is unique up to a degree 1 isomorphism.
Then a (1-)direct sum in a \(\tau\)-graded category \(\cat{C}\) is the same as an ordinary direct sum in \(\cat{C}^1\).
If \(\cat{C}\) has shifts, then we can always convert a degree \(h\) morphism to a morphism in \(\cat{C}^1\) by composing with an appropriate shift isomorphism.
It is therefore reasonable to expect that many notions can be expressed with reference only to \(\cat{C}^1\).

\begin{definition}
Let \(\cat{C}\) be a \(\tau\)-graded category.
\begin{itemize}
\item We call \(\cat{C}\) \defined{additive} if it has shifts and \(\cat{C}^1\) is additive.
\item We call a \(\tau\)-graded functor or natural transformation additive if its restriction to \(\cat{C}^1\) is additive.
\item We write \(\Cat_{\tau}^{\oplus}\) for the 2-category of additive \(\tau\)-graded categories, additive \(\tau\)-graded functors and additive \(\tau\)-graded natural transformations.
\end{itemize}
We write \((-)^{\oplus}\) for additive completion.
For a \(\tau\)-graded category this means adding both shifts and finite direct sums.
\end{definition}
\begin{remark}
If a \(\tau\)-graded category \(\cat{C}\) already has shifts, then \(\cat{C}^{\oplus}\) has compatible shifts via the degree \(h\) isomorphisms
\begin{equation*}
\begin{tikzcd}[column sep=6em]
\bigoplus_{i=1}^n X_i
\ar[r, "\bigoplus_{i=1}^n r_{X_i,h}"]
&
\bigoplus_{i=1}^n X_i \langle h \rangle
\,.
\end{tikzcd}
\end{equation*}
\end{remark}

The following technical lemma describes precisely how shifts interact with \(h\)-direct sums.
The corollary connects our definition of additivity with that of Sözer and Virelizier.

\begin{lemma}
Let \((X_i)_{i \in I}\) be a collection of objects in a \(\tau\)-graded category, and let \(h \in H\).

\begin{enumerate}
\item Suppose \(\cat{C}\) contains both the 1-direct sum \(\bigoplus_{i \in I} X_i\) and the shift object \((\bigoplus_{i \in I} X_i)\langle h \rangle\).
Then \((\bigoplus_{i \in I} X_i)\langle h \rangle\) is an \(h\)-direct sum of \((X_i)_{i \in I}\).

\item Suppose \(\cat{C}\) contains the shift objects \((X_i\langle h \rangle)_{i \in I}\) and the 1-direct sum \(\bigoplus_{i \in I} X_i\langle h \rangle\).
Then \(\bigoplus_{i \in I} X_i\langle h \rangle\) is an \(h\)-direct sum of \((X_i)_{i \in I}\).

\item Suppose \(\cat{C}\) contains the singleton \(h\)-direct sum \(\gbigoplus{h} X\) for some \(X\).
Then \(\gbigoplus{h} X\) is a shift of \(X\) by \(h\).
\end{enumerate}
\label{lem:gbigoplus}
\end{lemma}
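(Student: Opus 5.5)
All three parts come down to writing explicit structure maps and checking the two $h$-direct sum identities (or the shift condition) by composition. The only tools needed are that shift morphisms $r_{X,h}$ are invertible of degree $h$, so $r_{X,h}^{-1}$ has degree $h^{-1}$, and that composition multiplies degrees.

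For (1), write $X = \bigoplus_{i\in I} X_i$ with degree 1 structure maps $\iota_i^1, \pi_i^1$, and let $r = r_{X,h}$. I define
\begin{equation*}
\iota_i := r \of \iota_i^1 \in \Hom^h_{\cat{C}}(X_i, X\langle h\rangle), \qquad
\pi_i := \pi_i^1 \of r^{-1} \in \Hom^{h^{-1}}_{\cat{C}}(X\langle h\rangle, X_i).
\end{equation*}
The degrees are correct by multiplicativity. The identities then follow directly:
\begin{equation*}
\pi_i \of \iota_j = \pi_i^1 \of r^{-1} \of r \of \iota_j^1 = \delta_{ij}\id[X_i], \qquad
\sum_i \iota_i \of \pi_i = r \of \Big(\sum_i \iota_i^1 \of \pi_i^1\Big) \of r^{-1} = \id[X\langle h\rangle].
\end{equation*}
The second identity uses $R$-linearity of composition to pull $r$ and $r^{-1}$ out of the sum.

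For (2), let $\iota_i^1, \pi_i^1$ be the degree 1 structure maps of $\bigoplus_{i\in I} X_i\langle h\rangle$. I set
\begin{equation*}
\iota_i := \iota_i^1 \of r_{X_i,h}, \qquad \pi_i := r_{X_i,h}^{-1} \of \pi_i^1.
\end{equation*}
These have degrees $h$ and $h^{-1}$. For $\pi_i \of \iota_j$ with $i\neq j$, the middle factor $\pi_i^1 \of \iota_j^1$ is zero, so the composite vanishes. For $i=j$ the middle factor is the identity, so $r^{-1}_{X_i,h} \of r_{X_i,h} = \id$. For the sum, $\sum_i \iota_i^1 \of r \of r^{-1} \of \pi_i^1 = \sum_i \iota_i^1 \of \pi_i^1 = \id$.

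For (3), a singleton $h$-direct sum comes with $\iota \in \Hom^h(X, \gbigoplus{h} X)$ and $\pi \in \Hom^{h^{-1}}(\gbigoplus{h} X, X)$. The axioms reduce to $\pi \of \iota = \id[X]$ and $\iota \of \pi = \id$. So $\iota$ is a degree $h$ isomorphism, and taking $r_{X,h} := \iota$ makes $\gbigoplus{h} X$ a shift of $X$ by $h$.

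There is no real obstacle here. The only point needing care is that the sum in (2) may be over an infinite $I$, as allowed in the definition. The $\pi_i\of\iota_j$ identity is entrywise and needs no finiteness, and the sum identity is taken in whatever sense it holds for the given 1-direct sum. So I would simply transport it through the shift isomorphisms as above, without any finiteness assumption.
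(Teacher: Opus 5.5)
Your proof is correct and takes the same approach as the paper. In (1) you conjugate the degree-1 structure maps by $r_{X,h}$, exactly as the paper does; in (2) you write out the construction the paper only calls ``similar''; and in (3) you take $r_{X,h} := \iota$ with inverse $\pi$, also as the paper does. Your closing remark about infinite $I$ is unnecessary, since the paper's additivity notions only involve finite direct sums; also, pulling $r$ out of the sum in (1) relies on bilinearity of composition, which only covers finite sums.
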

\begin{proof}
Suppose we have \((X,\pi_i,\iota_i) := \bigoplus_{i \in I} X_i\) with shift \((X\langle h \rangle, r_{X,h})\).
We can then define
\begin{align*}
\pi'_i
&:= \pi_i \of r_{X,h}^{-1}
\in \Hom_{\cat{C}}^{h^{-1}}(X \langle h \rangle, X_i)
&
\iota'_i
&:= r_{X,h} \of \iota_i
\in \Hom_{\cat{C}}^h(X_i, X \langle h \rangle)
\end{align*}  
with
\begin{gather*}
\sum_{i \in I} \big( \iota'_i \of \pi'_i \big)
= r_{X,h} \of \sum_{i \in I} (\iota_i \of \pi_i) \of r_{X,h}^{-1}
= r_{X,h} \of \id[X] \of r_{X,h}^{-1}
= \id[X \langle h \rangle]
\\
\pi'_i \of \iota'_j
= (\pi_i \of r_{X,h}^{-1}) \of (r_{X,h} \of \iota_j)
= \pi_i \of \iota_j
= \delta_{ij} \id[X_i]
\end{gather*}  
for all \(i,j \in I\).
The proof for (2) is similar.
Finally, a coprojection \(\iota \colon X \to \gbigoplus{h}_{i=1}^1 X\) is a degree \(h\) isomorphism with inverse given by the corresponding projection.
Therefore we have \(\gbigoplus{h}_{i=1}^1 X = X\langle h \rangle\) with \(r_{X,h} = \iota\).
\end{proof}
\begin{corollary}
A \(\tau\)-graded category is additive if and only if it is \(H\)-additive.
\label{cor:additive-hom-graded-def}
\end{corollary}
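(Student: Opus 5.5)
Both directions follow from \cref{lem:gbigoplus}, after a remark on what the two definitions ask for. Additivity means that \(\cat{C}\) has shifts and that \(\cat{C}^1\) has all finite (1-)direct sums. \(H\)-additivity means that \(\cat{C}\) has all finite \(h\)-direct sums for every \(h \in H\). As noted just before \cref{lem:gbigoplus}, a 1-direct sum in \(\cat{C}\) is the same as an ordinary direct sum in \(\cat{C}^1\): the projections and coprojections have degree 1, and the identities are the same. We take "finite" to include the empty family, which gives the zero object.

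Suppose first that \(\cat{C}\) is additive, and fix \(h \in H\) and a finite family \((X_i)_{i \in I}\). The 1-direct sum \(\bigoplus_{i \in I} X_i\) exists because \(\cat{C}^1\) is additive. Its shift \((\bigoplus_{i \in I} X_i)\langle h \rangle\) exists because \(\cat{C}\) has shifts. By \cref{lem:gbigoplus}(1), this shift is an \(h\)-direct sum of \((X_i)_{i \in I}\). Since \(h\) and the family were arbitrary, \(\cat{C}\) is \(H\)-additive. For the empty family, the zero object together with its shift works, and the lemma's argument applies verbatim with an empty sum.

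Conversely, suppose \(\cat{C}\) is \(H\)-additive. Taking \(h = 1\) gives all finite 1-direct sums, so \(\cat{C}^1\) is additive. For shifts, fix \(X \in \cat{C}\) and \(a \in H\). The singleton \(a\)-direct sum \(\gbigoplus{a} X\) exists, and by \cref{lem:gbigoplus}(3) it is a shift of \(X\) by \(a\), with \(r_{X,a} = \iota\). Hence \(\cat{C}\) has shifts, and is therefore additive.

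No step is a real obstacle. The only points to check are the identification of 1-direct sums with direct sums in \(\cat{C}^1\), and the handling of the empty family, where the empty \(h\)-direct sum is a zero object and is unaffected by \(h\).
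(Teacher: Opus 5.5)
Your proof is correct and matches the paper's intended argument. The paper states the corollary without a separate proof, as an immediate consequence of \cref{lem:gbigoplus}: part (1), or equally part (2), gives additive $\Rightarrow$ $H$-additive, and part (3) together with the case $h=1$ gives the converse. Your treatment of the empty family is a harmless detail that the paper leaves implicit.
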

\subsection{Semisimplicity for \texorpdfstring{\(\tau\)}{τ}-graded categories}
\label{sec:sstau}
Now that we have a notion of direct sum, and have recalled semisimplicity for \(R\)-linear categories, we are able to give a new definition of semisimplicity for \(\tau\)-graded categories.
We show in \cref{prop:sstau} that this matches the notion used in \cite{sv2023}, combining their Sections 3.1 and 3.5.
We will then define and explore a new parametrised class of semisimple \(\tau\)-graded categories which will later play a role in the structure theorem.
\subsubsection{Definition and basic results}
\label{sec:org277b9ea}

\begin{definition}
Let \(\cat{C}\) be a \(\tau\)-graded category. Then

\begin{itemize}
\item an object in \(\cat{C}\) is \defined{simple} if it is \(R\)-linear simple in \(\cat{C}^1\).

\item \(\cat{C}\) is \defined{\(H\)-semisimple} if
\begin{enumerate}
\item for any \(h \in H\), each object in \(\cat{C}\) can be written as a finite \(h\)-direct sum of simple objects
\item any two simple objects are either isomorphic or disjoint in \(\cat{C}^1\)
\end{enumerate}

\item we call \(\cat{C}\) \defined{semisimple} if \(\cat{C}\) has shifts and \(\cat{C}^1\) is \(R\)-linear semisimple (or equivalently if \(\cat{C}\) is additive and \(\cat{C}^1\) is \(R\)-linear semisimple).
\end{itemize}
\end{definition}

\begin{proposition}
Let \(\cat{C}\) be a \(\field\)-linear \(\tau\)-graded category over a field \(\field\).
Then \(\cat{C}\) is semisimple if and only if it is additive and \(H\)-semisimple.
\label{prop:sstau}
\end{proposition}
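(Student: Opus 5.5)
The plan is to prove the two implications separately, working throughout in \(\cat{C}^1\) and moving between degrees with shift isomorphisms and \cref{lem:gbigoplus}. In both directions additivity is automatic. Semisimplicity includes having shifts, and \(R\)-linear semisimplicity of \(\cat{C}^1\) includes additivity of \(\cat{C}^1\), so a semisimple \(\cat{C}\) is additive. Conversely, additivity supplies the shifts required in the definition of semisimple. What remains is to compare the two notions of `every object decomposes into simples'.

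\emph{Semisimple \(\Rightarrow\) \(H\)-semisimple.} Fix a collection \(S\) of mutually disjoint \(\field\)-linear simple objects witnessing \(R\)-linear semisimplicity of \(\cat{C}^1\).
\begin{enumerate}
\item First I show that every simple object \(T\) of \(\cat{C}\) is degree-1 isomorphic to a single member of \(S\). Write \(T \iso \bigoplus_i S_i^{\oplus n_i}\) in \(\cat{C}^1\). By disjointness, \(\End_{\cat{C}^1}(T) \iso \prod_i M_{n_i}(\End(S_i)) \iso \prod_i M_{n_i}(\field)\). This algebra has \(\field\)-dimension \(\sum_i n_i^2\). Since \(\End(T) \iso \field\) has dimension \(1\), exactly one \(n_i\) equals \(1\) and all others vanish.
\item Condition (2) of \(H\)-semisimplicity then follows. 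Two simples \(T \iso S_i\) and \(T' \iso S_j\) are isomorphic if \(i=j\), and disjoint if \(i \neq j\), because disjointness of \(S_i\) and \(S_j\) transports along degree-1 isomorphisms.
\item For condition (1), fix \(X\) and \(h \in H\). Decompose the shift \(X\langle h^{-1}\rangle \iso \bigoplus_i T_i\) in \(\cat{C}^1\) with each \(T_i \in S\).
\item The inverse \(r_{X,h^{-1}}^{-1}\) is a degree \(h\) isomorphism \(X\langle h^{-1}\rangle \to X\). Hence \(X\) is a shift of \(X\langle h^{-1}\rangle \iso \bigoplus_i T_i\) by \(h\).
\item By \cref{lem:gbigoplus}(1), after composing with the degree-1 isomorphism above, \(X\) is an \(h\)-direct sum of the simple objects \(T_i\).
\end{enumerate}

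\emph{Additive and \(H\)-semisimple \(\Rightarrow\) semisimple.} Shifts exist by additivity, and \(\cat{C}^1\) is additive, so it remains to show \(\cat{C}^1\) is \(\field\)-linear semisimple.
\begin{enumerate}
\item Take \(h=1\) in condition (1). Every object is then a finite \(1\)-direct sum of simples, that is, an ordinary direct sum in \(\cat{C}^1\).
\item Choose one representative from each degree-1 isomorphism class of simple objects. By condition (2), distinct representatives are non-isomorphic, hence disjoint in \(\cat{C}^1\).
\item Every object is a finite direct sum of simples, and each simple is isomorphic to one of these representatives, so every object is a finite direct sum of representatives. This is exactly \cref{def:objss}.
\end{enumerate}

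The main obstacle is step (1) of the forward direction: a simple object need not a priori be one of the chosen \(S_i\). This is the only point where the hypothesis that \(\field\) is a field is used, through the dimension count of the endomorphism algebra; over a general ring one would need rank arguments instead. I would also check two small points: the zero object is correctly excluded, since \(\End(0)=0 \not\iso \field\), and the \(h\)-direct sum structure transports along degree-1 isomorphisms, which is immediate from the composition formulas in the proof of \cref{lem:gbigoplus}.
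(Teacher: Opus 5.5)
Your proof is correct and follows the same route as the paper. Both use \cref{lem:gbigoplus} to reduce the two notions to conditions on \(\cat{C}^1\) (every object is a finite direct sum of simples, and simples are isomorphic or disjoint) and then compare with \(\field\)-linear semisimplicity; your endomorphism dimension count makes precise the step the paper attributes to Schur's lemma, and, as you suspect, the same count works by rank over any non-zero commutative ring, since such rings have invariant basis number.
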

\begin{remark}
In one direction this is equivalent to Lemma 3.1(i) of \cite{sv2023}.
\end{remark}
\begin{proof}
First, observe that \cref{lem:gbigoplus} allows us to say that \(\cat{C}\) is additive \(H\)-semisimple if and only if it has shifts and \(\cat{C}^1\) satisfies the following two axioms in addition to additivity:
\begin{enumerate}
\item every object can be written as a finite direct sum of linear simple objects
\item any two simple objects are either isomorphic or disjoint.
\end{enumerate}
This is equivalent to \(\field\)-linear semisimplicity by Schur's lemma (\ref{lem:schur}).
\end{proof}

\begin{eg}[\(R\cat{G}_{\tau}^{\oplus}\)]
Recall the \(\tau\)-graded category \(R\cat{G}_{\tau}\) from \cref{eg:tgrad:grpd}, which has shifts by \cref{eg:shifts:grpd}.
For each \(g,g' \in R\cat{G}_{\tau}\), the additive completion \(R\cat{G}_{\tau}^{\oplus}\) has
\begin{equation*}
\Hom_{R\cat{G}_{\tau}^{\oplus}}^1 (g,g')
= \Hom_{R\cat{G}_{\tau}}^1 (g,g')
= \delta_{\tau(1)g,g'} R (1,g)
\iso \delta_{g,g'} R
\,,
\end{equation*}
so that the inclusion of \(\ob(R\cat{G}_{\tau})\) gives a set of mutually disjoint \(R\)-linear simple objects in \((R\cat{G}_{\tau}^{\oplus})^1\).
Then \(R\cat{G}_{\tau}^{\oplus}\) is semisimple by construction.
The equivalence classes of simple objects up to general isomorphism are in bijection with the coset space \(\im(\tau) \backslash G\).
\label{eg:ss:grpd}
\end{eg}

\begin{eg}[\({\FFGMod[R]_{\tau}}\)]
Recall the \(\tau\)-graded category \(\Mod[R]_{\tau}\) from \cref{eg:tgrad:modtau}, which has shifts by \cref{eg:shifts:modtau}.
Define \(\FFGMod[R]_{\tau}\) to be the \(\tau\)-graded subcategory containing only the \(G\)-graded modules which are free and finitely generated.
Then we have
\begin{equation*}
(\FFGMod[R]_{\tau})^1
= \FFGMod[R]_G
\eqv \bigboxplus_{g \in G} \FFGMod[R]
\end{equation*}
as \(R\)-linear categories, so that \(\FFGMod[R]_{\tau}\) is semisimple by \cref{eg:objss:modffg}.
The simple objects are given up to degree 1 isomorphism by symbols \((R_g)_{g \in G}\) with components \((R_g)_{g'} = \delta_{gg'} R\).
Up to general isomorphism they correspond to representatives of the coset space \(\im(\tau)\backslash G\).
\label{eg:ss:modtau}
\end{eg}

\begin{eg}[\(\tau \colon C_8 \to C_2\)]
For \(H=C_8=\langle x \rangle\) and \(G=C_2=\langle y \rangle\), recall the \(\tau\)-graded categories \((\cat{C}_k)_{k=1,2,4,8}\) from \cref{eg:tgrad:cyclic}, which have shifts by \cref{eg:shifts:cyclic}.
We already have each \(\End_{\cat{C}_k}^1 = R\), so the additive completion \(\cat{C}_k^{\oplus}\) is semisimple.
The \(8/k\) simple objects are distinct up to degree 1 isomorphism, but the whole category is connected up to general isomorphism.
\label{eg:ss:cyclic}
\end{eg}
\subsubsection{The \texorpdfstring{\(\tau\)}{τ}-graded category \texorpdfstring{\(\cat{M}_{\tau}(L,\psi)\langle g \rangle\)}{Mτ(L,ψ)<g>}}
\label{sec:orgc6908c4}

Our final class of examples is more involved, but will play a central role in our structure theorem for semisimple \(\tau\)-graded categories (\cref{sec:struct}).
They turn out to be indecomposable, and a consequence of \cref{thm:tgrad:struct} will be that all indecomposable semisimple \(\tau\)-graded categories are of this form.

Given a subgroup \(L \le H\), recall that \(M := \Hom_{\Set}(H/L,R^{\times})\) is a right \(H\)-module with multiplication in \(R^{\times}\) and right \(H\)-action \((f \triangleleft h)(kL) = f(hkL)\).
Write \(1 \colon H/L \to R^{\times}\) for the constant function \(hL \mapsto 1\).
A \defined{normalised 2-cocycle} for \(M\) is then a function \(\psi \colon H^2 \to M\) with
\begin{equation*}
d\psi(a,b,c)
= \psi(b,c) \cdot \psi(ab,c)^{-1} \cdot \psi(a,bc) \cdot \psi(a,b)^{-1} \triangleleft c
= 1
\end{equation*}
and all \(\psi(1,h) = \psi(h,1) = 1\).

\begin{definition}
Let \(L \le \ker(\tau)\ \triangleleft H\) and let \(\psi \colon H^2 \to \Hom_{\Set}(H/L,R^{\times})\) be a normalised 2-cocycle.
Write \(\cat{S}_{\tau}(L,\psi)\langle g \rangle\) for the \(\tau\)-graded category with
\begin{itemize}
\item objects \((R_{hL})_{hL \in H/L}\)
\item degree \(a\) morphisms given by \(\Hom^a(R_{hL},R_{h'L}) = \delta_{ahL,h'L} R e_{hL}^a\) with each \(e_{hL}^1 = \id[R_{hL}]\)
\item composition given by \(e_{bhL}^a \of e_{hL}^b = \psi(a,b)(hL)^{-1} e_{hL}^{ab}\)
\item object degrees given by \(|R_{hL}| = \tau(h)g\).
\end{itemize}
We then write \(\cat{M}_{\tau}(L,\psi)\langle g \rangle\) for the completion of \(\cat{S}_{\tau}(L,\psi)\langle g \rangle\) under direct sums, and we also write \(\cat{M}_{\tau}(L,\psi) = \cat{M}_{\tau}(L,\psi)\langle 1 \rangle\).
\label{def:mtau}
\end{definition}
\begin{remark}
Restricting \(L\) to a subgroup of \(\ker(\tau)\) is necessary in order to ensure a \(\tau\)-graded category.
This is because \cref{def:tgrad} requires \(|R_{h'L}| = \tau(a)|R_{hL}|\) whenever \(\Hom^a(R_{hL},R_{h'L}) \neq 0\), but we have \(\Hom^a(R_{hL},R_{h'L}) \neq 0\) precisely when there exists \(\ell \in L\) with \(ah = h'\ell\).
Then we have
\begin{equation*}
\tau(h')g
= |R_{h'L}|
= \tau(a)|R_{hL}|
= \tau(ah)g
= \tau(h'\ell)g
\end{equation*}
if and only if \(\ell \in \ker(\ell)\).
\end{remark}

\begin{eg}[\(\tau \colon C_8 \to C_2\)]
For \(H=C_8=\langle x \rangle\) and \(G=C_2=\langle y \rangle\), recall the \(\tau\)-graded categories \((\cat{C}_k)_{k=1,2,4,8}\) from \cref{eg:tgrad:cyclic}, which have shifts by \cref{eg:shifts:cyclic}.
Directly from \cref{eg:ss:cyclic}, we have each \(\cat{C}_k \iso \cat{M}_{\tau}(C_k,1)\).
\end{eg}

\begin{lemma}
Each \(e_{hL}^a\) is invertible with \((e_{hL}^a)^{-1} = \psi(a^{-1},a)(hL) e_{ahL}^{a^{-1}}\) and \(e_{hL}^1 = \id[R_{hL}]\).
\label{lem:mtau:basis-morphisms}
\end{lemma}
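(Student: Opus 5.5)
The statement to prove is \cref{lem:mtau:basis-morphisms}: each \(e_{hL}^a\) is invertible with the stated inverse, and \(e_{hL}^1 = \id[R_{hL}]\). The plan is a direct computation with the composition rule of \cref{def:mtau}, using the normalisation and cocycle conditions on \(\psi\). The second claim is part of \cref{def:mtau}, and it agrees with the composition rule because \(\psi(1,b)=\psi(a,1)=1\).

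First, the candidate inverse has the right type. Since \(e_{hL}^a \colon R_{hL} \to R_{ahL}\), the morphism \(e_{ahL}^{a^{-1}}\) goes \(R_{ahL} \to R_{a^{-1}ahL} = R_{hL}\) and has degree \(a^{-1}\). Note that \(\psi(a^{-1},a)(hL)\) is a unit of \(R\), so scaling by it is harmless.

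Next compute the composite \(e_{ahL}^{a^{-1}} \of e_{hL}^a\). Apply the composition rule with first morphism \(e_{hL}^b\) for \(b=a\), then \(e_{bhL}^{a'}\) for \(a'=a^{-1}\). This gives
\[
e_{ahL}^{a^{-1}} \of e_{hL}^a = \psi(a^{-1},a)(hL)^{-1}\, e_{hL}^{1} = \psi(a^{-1},a)(hL)^{-1}\,\id[R_{hL}].
\]
Multiplying by \(\psi(a^{-1},a)(hL)\) gives \(\id[R_{hL}]\), so the proposed morphism is a left inverse.

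For the other side, compose in the opposite order: first \(e_{ahL}^{a^{-1}}\) with base coset \(ahL\) and \(b=a^{-1}\), then \(e_{hL}^{a}\), where \(hL = a^{-1}(ahL)\). This gives
\[
e_{hL}^a \of e_{ahL}^{a^{-1}} = \psi(a,a^{-1})(ahL)^{-1}\,\id[R_{ahL}].
\]
So we need \(\psi(a,a^{-1})(ahL) = \psi(a^{-1},a)(hL)\).

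This identity is the main step. Evaluate the cocycle condition \(d\psi(a,a^{-1},a)=1\) at \(hL\), recalling that \((f\triangleleft c)(kL) = f(ckL)\). It reads
\[
\psi(a^{-1},a)(hL)\,\psi(1,a)(hL)^{-1}\,\psi(a,1)(hL)\,\psi(a,a^{-1})(ahL)^{-1} = 1.
\]
By normalisation the middle two factors are \(1\), which yields exactly the required equality. The only real risk is bookkeeping of the right action and of which coset each factor is evaluated at, so I would double-check the conventions \(e^a_{bhL}\of e^b_{hL}\) and \(\triangleleft\) against \cref{def:mtau} at this point.

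Since the morphism is both a left and a right inverse, \(e_{hL}^a\) is invertible with the claimed inverse. If needed, one could also confirm that the result is consistent with associativity of composition, which is equivalent to the cocycle condition, but the lemma does not require this.
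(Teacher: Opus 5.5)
Your proof is correct and is essentially the paper's own argument. You compute both composites directly from the composition rule: the left-inverse identity is immediate, and the right-inverse identity \(\psi(a,a^{-1})(ahL) = \psi(a^{-1},a)(hL)\) follows from the cocycle condition \(d\psi(a,a^{-1},a)=1\) together with normalisation, exactly as the paper does. Your bookkeeping of the right action \(\triangleleft\), and of the coset at which each factor is evaluated, is also correct.
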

\begin{proof}
For any \(h,a \in H\), the normalisation of \(\psi\) gives
\begin{align*}
e_{ahL}^1 \of e_{hL}^a &= \psi(1,a)(hL) e_{hL}^a = e_{hL}^a &
e_{hL}^a  \of e_{hL}^1 &= \psi(a,1)(hL) e_{hL}^a = e_{hL}^a \,.
\end{align*}
It follows that
\begin{gather*}
\begin{split}
e_{hL}^a \of \psi(a^{-1},a)(hL) e_{ahL}^{a^{-1}}
&= \psi(a,a^{-1})(ahL)^{-1} \cdot \psi(a^{-1},a)(hL) \cdot \id[R_{ahL}] \\
&= (\psi(a,a^{-1}) \triangleleft a \big)(hL)^{-1} \cdot \psi(a^{-1},a)(hL) \cdot \id[R_{ahL}] \\
&= \psi(1,a)(hL) \cdot \psi(a,1)(hL)^{-1} \cdot \id[R_{ahL}] \\
&= \id[R_{ahL}]
\end{split}
\\
\psi(a^{-1},a)(hL) e_{ahL}^{a^{-1}} \of e_{hL}^a
= \psi(a^{-1},a)(hL)^{-1} \cdot \psi(a^{-1},a)(hL) \id[R_{hL}]
= \id[R_{hL}]
\,.\qedhere
\end{gather*}
\end{proof}

\begin{proposition}
Every \(\tau\)-graded category of the form \(\cat{M}_{\tau}(L,\psi)\langle g \rangle\) is indecomposable semisimple, with shifts \(r_{R_{hL},h} := e_{hL}^a \colon R_{hL} \to R_{ahL}\) and a single simple object \(R_{1L}\) up to general isomorphism.
\label{prop:indecss}
\end{proposition}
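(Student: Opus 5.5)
The plan is to verify the three claims separately: shifts exist, the degree 1 subcategory is \(R\)-linear semisimple, and the category is indecomposable with one simple object up to general isomorphism. The statement's formula \(r_{R_{hL},h} := e^a_{hL}\) should read \(r_{R_{hL},a} := e^a_{hL} \colon R_{hL} \to R_{ahL}\), and I will use it in that form.

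\textbf{Shifts.} By \cref{lem:mtau:basis-morphisms}, \(e^a_{hL}\) is a degree \(a\) isomorphism \(R_{hL} \to R_{ahL}\). Hence \(R_{ahL}\) is a shift of \(R_{hL}\) by \(a\). On a finite direct sum in \(\cat{M}_{\tau}(L,\psi)\langle g\rangle\), a shift is obtained by taking the direct sum of shift isomorphisms, as in the remark following the definition of additivity. So \(\cat{M}_{\tau}(L,\psi)\langle g\rangle\) has shifts and is additive.

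\textbf{Semisimplicity of \(\cat{M}^1\).} By definition \(\Hom^1(R_{hL},R_{h'L}) = \delta_{hL,h'L}\, R\, e^1_{hL}\). Since \(e^1_{hL} = \id\), each \(\End^1(R_{hL}) = R\,\id \iso R\), so each \(R_{hL}\) is \(R\)-linear simple in degree 1. Distinct cosets give disjoint objects in degree 1. Every object of the completion is a finite direct sum of the \(R_{hL}\). Therefore \(\cat{M}^1\) is \(R\)-linear semisimple, with simple objects indexed by \(H/L\) up to degree 1 isomorphism.

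One point needs care. The completion adds direct sums, and its degree 1 part should be the additive completion of \(\cat{S}^1\). This holds because morphisms between direct sums are matrices, and the degree 1 part of such a matrix is the matrix of degree 1 components. A simple object of \(\cat{M}^1\) is therefore degree 1 isomorphic to some \(R_{hL}\).

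\textbf{One simple object up to general isomorphism.} For each \(h\), the morphism \(e^h_{1L} \colon R_{1L} \to R_{hL}\) is an isomorphism by \cref{lem:mtau:basis-morphisms}. So every simple object is isomorphic, in degree \(h\), to \(R_{1L}\).

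\textbf{Indecomposability.} This is the main obstacle, since it depends on which definition of decomposition is used; I take it to mean a \(\tau\)-graded equivalence with \(\cat{C}_1 \boxplus \cat{C}_2\) where both summands are nonzero. Suppose \(\cat{M}\) decomposes as \(\cat{C}_1 \boxplus \cat{C}_2\). Any simple object lies in exactly one summand, and homspaces between the two summands vanish in all degrees. The isomorphisms \(e^h_{1L}\) connect every simple object to \(R_{1L}\), so all simples lie in the same summand. Every object is a sum of simples, so the other summand is zero. Hence \(\cat{M}\) is indecomposable.
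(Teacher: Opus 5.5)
Your proposal is correct and follows essentially the same route as the paper. The invertibility of the \(e^a_{hL}\) from \cref{lem:mtau:basis-morphisms} gives the shifts and connects every simple object to \(R_{1L}\); semisimplicity of the degree 1 part is read off from the definition; and indecomposability comes from the fact that a non-trivial decomposition would separate simple objects that are actually isomorphic in some degree. Your last step (all simples land in one summand, so the other summand is zero) is just the contrapositive of the paper's observation that \(A \boxplus 0\) and \(0 \boxplus B\) would be non-isomorphic simples, and your reading \(r_{R_{hL},a} := e^a_{hL}\) of the statement is the intended one.
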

\begin{proof}
By definition, \(\cat{M}_{\tau}(L,\psi)\langle g \rangle^1\) is \(R\)-linear semisimple with simple objects \((R_{hL})_{hL \in H/L}\) which are all connected by the general morphisms \(e_{hL}^a\). These are invertible by \cref{lem:mtau:basis-morphisms}, and hence provide shifts.

We note that a semisimple \(\tau\)-graded category is indecomposable if and only if it has a single simple object up to general degree isomorphism: if \(\cat{C} \iso \cat{A} \boxplus \cat{B}\) for semisimple \(\cat{A}\) and \(\cat{B}\) with simple \(A \in \cat{A}\), \(B \in \cat{B}\), then \(A \boxplus 0\) and \(0 \boxplus B\) are two non-isomorphic simple objects in \(\cat{C}\).
\end{proof}

\begin{proposition}
There exist degree \(x\) simple objects in \(\cat{M}_{\tau}(L,\psi)\langle g \rangle\) if and only if \(x \in \im(\tau)g\), in which case they are in bijection with \(\ker(\tau)/L\).
\label{prop:mtau-gsimples}
\end{proposition}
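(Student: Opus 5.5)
The plan is to identify the simple objects of \(\cat{M}_{\tau}(L,\psi)\langle g \rangle\) up to degree 1 isomorphism, read off their degrees, and then count those of a fixed degree. By \cref{def:mtau} and the proof of \cref{prop:indecss}, the category \(\cat{M}_{\tau}(L,\psi)\langle g \rangle^1\) is the additive completion of the degree 1 part of \(\cat{S}_{\tau}(L,\psi)\langle g \rangle\). I would first check that the objects \(R_{hL}\) are mutually disjoint \(R\)-linear simple objects in the degree 1 subcategory. Indeed,
\begin{equation*}
\Hom^1(R_{hL},R_{h'L}) = \delta_{hL,h'L}\, R\, e^1_{hL} = \delta_{hL,h'L}\, R \id[R_{hL}] \,,
\end{equation*}
so each \(\End^1(R_{hL}) \iso R\), and distinct cosets give disjoint objects. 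It follows that every simple object of \(\cat{M}_{\tau}(L,\psi)\langle g \rangle\) is degree 1 isomorphic to exactly one \(R_{hL}\). The reason is that a simple object \(X\) is a finite direct sum of the \(R_{hL}\), and \(\End^1(X)\) is then a product of matrix rings over \(R\); this is isomorphic to \(R\) only when there is a single summand of multiplicity one (using \(R \neq 0\)). Hence degree 1 isomorphism classes of simple objects are in bijection with \(H/L\).

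Next I would compute degrees. By definition \(|R_{hL}| = \tau(h)g\), which is well defined because \(L \le \ker(\tau)\). So a degree \(x\) simple object exists if and only if \(x = \tau(h)g\) for some \(h\), that is, if and only if \(x \in \im(\tau)g\). This proves the first claim.

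For the count, fix \(x = \tau(h_0)g\). The simple objects of degree \(x\), up to degree 1 isomorphism, are the \(R_{hL}\) with \(\tau(h) = \tau(h_0)\). This condition says \(h \in h_0\ker(\tau)\), so these objects correspond to the cosets \(hL\) contained in \(h_0\ker(\tau)\). The map \(kL \mapsto h_0kL\) is a bijection from \(\ker(\tau)/L\) to this set: it is well defined and injective because left multiplication by \(h_0\) is a bijection on \(H\) that carries left \(L\)-cosets to left \(L\)-cosets, and it is surjective onto the cosets inside \(h_0\ker(\tau)\).

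The only mildly delicate step is the first one, showing that every simple object is degree 1 isomorphic to some \(R_{hL}\). This is where the matrix-ring argument and the interpretation of the bijection as being up to degree 1 isomorphism come in. Everything after that is bookkeeping with cosets.
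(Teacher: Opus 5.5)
Your proof is correct and takes essentially the same route as the paper: read off \(|R_{hL}| = \tau(h)g\) and identify the cosets \(hL\) with \(\tau(h) = \tau(h_0)\). You also supply two details the paper leaves implicit. First, your rank argument on \(\mathrm{End}^1(X) \cong \prod_i M_{n_i}(R)\) shows that every simple object is degree 1 isomorphic to exactly one \(R_{hL}\). Second, your parametrisation \(kL \mapsto h_0kL\) gives the bijection with \(\ker(\tau)/L\) directly, whereas the paper's \(kbL\) naturally indexes \(\ker(\tau)/bLb^{-1}\) and needs a conjugation by \(b\) to match.
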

\begin{proof}
Any degree \(x\) simple object is of the form \(R_{hL}\) with \(x = \tau(h)g\).
Conversely, suppose \(x = \tau(b)g \in \im(\tau)g\).
Then for each \(hL \in H/L\) we have \(|R_{hL}| = \tau(h)g = x\) if and only if \(hL = kbL\) for some \(k \in \ker(\tau)\).
\end{proof}

We note that two simple objects in \(\cat{M}_{\tau}(L,\psi)\langle g \rangle\) are equal if and only if they are degree 1 isomorphic.
In a sense this is the highest level of skeletality achievable in a semisimple \(\tau\)-graded category.
\subsection{Structure theorem}
\label{sec:struct}
Finally we prove the structure theorem for semisimple \(\tau\)-graded categories.
This gives a direct sum decomposition of any such category, where each summand is of the form \(\cat{M}_{\tau}(L,\psi)\langle g \rangle\) for some \(L,\psi,g\).
Our first result in this section is a technical lemma that describes how to obtain values for \(L\) and \(\psi\).
\Cref{thm:tgrad:struct} is the main result.
We follow this and end the section with \cref{thm:mtau-equiv}, which determines precisely when and how two categories of the form \(\cat{M}_{\tau}(L,\psi)\langle g \rangle\) can be \(\tau\)-graded equivalent.

\begin{lemma}
Let \(\cat{C}\) be a \(\tau\)-graded category with shifts and a simple object \(S\).
Write
\begin{equation*}
L_S = \{ a \in H \st S\langle a \rangle \iso S \} \le \ker(\tau)
\end{equation*}
and fix coset representatives \((h_i)_{i \in I}\) so that \(H = \coprod_{i \in I} h_i L_S\).
It follows that \(I\) has an \(H\)-action \(i \mapsto ai\) induced by \(h_{ai}L_S = ah_i L_S\).

For all \(i \in I\) and \(a \in H\), fix a morphism \(f_{S,i}^a\) spanning \(\Hom_{\cat{C}}^a(S\langle h_i \rangle, S\langle h_{ai} \rangle) \iso R\).
In particular, fix each \(f_{S,i}^1 = \id[S\langle h_i \rangle]\).
Then
\begin{equation}
\label{eq:tgrad:psi-s}
f_{S,i}^{ab} = \psi_S(a,b)(h_iL_S) \cdot f_{S,bi}^a \of f_{S,i}^b \,.
\end{equation}
defines a normalised 2-cocycle \(\psi_S \colon H^2 \to \Hom_{\Set}(H/L_S,R^{\times})\).
\label{lem:tgrad:psi-s}
\end{lemma}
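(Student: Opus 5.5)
We first need that the statement makes sense, and then the cocycle identity follows from associativity of composition.

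\emph{Step 1: $L_S$ is a subgroup of $\ker(\tau)$.} Composing shift isomorphisms shows $L_S$ is closed under products and inverses. Concretely, $S\langle a\rangle\iso S$ means there is a degree $a$ isomorphism $S\to S$: compose $r_{S,a}$ with a degree 1 isomorphism $S\langle a\rangle\to S$. Products and inverses of such automorphisms have degrees $ab$ and $a^{-1}$. Since $S$ is simple it is non-zero, so $\Hom^a(S,S)\neq 0$, and condition (3) of \cref{def:tgrad} gives $\tau(a)=1$.

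\emph{Step 2: the $H$-action on $I$ is well defined.} The $H$-action on $I$ is just left multiplication on $H/L_S$.

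\emph{Step 3: each $\Hom^a_{\cat{C}}(S\langle h_i\rangle,S\langle h_{ai}\rangle)$ is free of rank one.} Composing with shift isomorphisms identifies this space with $\Hom^{h_{ai}^{-1}ah_i}(S,S)$, and $h_{ai}^{-1}ah_i\in L_S$. For $\ell\in L_S$, precomposition with a fixed degree $\ell$ automorphism $u$ of $S$ identifies $\Hom^\ell(S,S)$ with $\Hom^1(S,S)=\End_{\cat{C}^1}(S)\iso R$. Moreover $u$ itself spans it, since any invertible element of $R$ is a basis of $R$. Because the generator is invertible (a composite of isomorphisms), each $f^a_{S,i}$ is an isomorphism. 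Hence $f^a_{S,bi}\of f^b_{S,i}$ is an invertible element of the rank-one module $\Hom^{ab}(S\langle h_i\rangle,S\langle h_{abi}\rangle)$, so it equals $\lambda f^{ab}_{S,i}$ for a unique $\lambda\in R^\times$. This shows $\psi_S(a,b)(h_iL_S)\in R^\times$ is well defined by \eqref{eq:tgrad:psi-s}, and normalisation follows from $f^1_{S,i}=\id$. Here we use $h_{1i}=h_i$, since coset representatives are fixed.

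\emph{Step 4: the cocycle identity.} Expand $f^{abc}_{S,i}$ in two ways. Bracketing as $(ab)c$ gives
\[ f^{abc}_{S,i}=\psi_S(ab,c)(h_iL_S)\, f^{ab}_{S,ci}\of f^c_{S,i}=\psi_S(ab,c)(h_iL_S)\,\psi_S(a,b)(h_{ci}L_S)\, f^a_{S,bci}\of f^b_{S,ci}\of f^c_{S,i}. \]
Bracketing as $a(bc)$ gives $\psi_S(a,bc)(h_iL_S)\,\psi_S(b,c)(h_iL_S)$ times the same composite. Since the composite is invertible, hence non-zero and a basis, the scalars agree. Using $h_{ci}L_S=ch_iL_S$, we have $\psi_S(a,b)(h_{ci}L_S)=(\psi_S(a,b)\triangleleft c)(h_iL_S)$, and the equality of scalars is exactly $d\psi_S=1$ in the stated convention.

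The main obstacle is Step 3. The care needed there is in showing the relevant hom-space is free of rank one with an invertible generator, so that the scalars land in $R^\times$ rather than merely $R$; simplicity only gives $\End\iso R$. The remaining steps are bookkeeping with the action convention $\triangleleft$.
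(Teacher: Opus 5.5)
Your proposal is correct and follows the paper's route. Both arguments write $f^a_{S,bi}\circ f^b_{S,i}$ as a scalar multiple of $f^{ab}_{S,i}$, read off normalisation from $f^1_{S,i}$ being the identity, and get $d\psi_S=1$ by comparing the two bracketings of $f^a_{S,bci}\circ f^b_{S,ci}\circ f^c_{S,i}$ using associativity. Your Step 3 also spells out what the paper's proof only gestures at with the word ``non-zero'': the relevant hom-spaces are free of rank one with invertible generators, so the scalars genuinely lie in $R^{\times}$.
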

\begin{proof}
Since \(f_{S,bi}^a \of f_{S,i}^b\) is invertible, it gives a non-zero morphism in \(\Hom_{\cat{C}}^{ab}(S\langle h_i \rangle, S\langle h_{abi} \rangle) = R f_{S,i}^{ab}\).
Since each \(i\) picks out a unique coset of \(L_S\), we must have some function \(\psi_L \colon H^2 \to \Hom_{\Set}(H/L_S,R^{\times})\) satisfying \eqref{eq:tgrad:psi-s}.
We then have
\begin{align*}
\begin{split}
f_{S,bci}^{a} \of (f_{S,ci}^b \of f_{S,i}^c)
&= \psi_S(b,c)(h_i L_S)^{-1} \cdot f_{S,bci}^a \of f_{S,i}^{bc} \\
&= \psi_S(b,c)(h_i L_S)^{-1} \cdot \psi_S(a,bc)(h_i L_S)^{-1} \cdot f_{S,i}^{abc}
\end{split}
\\
\begin{split}
(f_{S,bci}^a \of f_{S,ci}^b) \of f_{S,i}^c
&= \psi_S(a,b)(h_{ci} L_S)^{-1} \cdot f_{S,ci}^{ab} \of f_{S,i}^c \\
&= \psi_S(a,b)(ch_i L_S)^{-1} \cdot \psi_S(ab,c)(h_i L_S)^{-1} \cdot f_{S,i}^{abc} \,,
\end{split}
\end{align*}
giving the 2-cocycle condition by the associativity of composition in \(\cat{C}\).
Normalisation follows from \(f_{S,i}^1 = \id[S\langle h_i \rangle]\) by
\begin{equation*}
\id[S\langle h_{ai} \rangle] \of f_{S,i}^a
= \psi_S(1,a)(h_iL_S) \cdot f_{S,i}^a
\qquad
f_{S,i}^a \of \id[S\langle h_i \rangle]
= \psi_S(a,1)(h_iL_S) \cdot f_{S,i}^a
\,.\qedhere
\end{equation*}
\end{proof}

\begin{theorem}
A \(\tau\)-graded category \(\cat{C}\) is semisimple if and only if there is a \(\tau\)-graded equivalence
\begin{equation*}
\cat{C} \eqv \bigboxplus_{i \in I} \cat{M}_{\tau}(L_i,\psi_i)\langle g_i \rangle
\end{equation*}
for some subgroups \(L_i \le \ker(\tau)\), 2-cochains \(\psi_i \colon H^2 \to \Hom_{\Set}(H/L_i,R^{\times})\) and elements \(g_i \in G\).
Suppose \(\cat{C}\) is semisimple, and fix a representative collection \((S_i)_{i \in I}\) of simple objects in \(\cat{C}\) up to general isomorphism.
Then we can choose each \(g_i = |S_i|\), \(L_i = L_{S_i}\) and \(\psi_i = \psi_{S_i}\) as in \cref{lem:tgrad:psi-s}.
\label{thm:tgrad:struct}
\end{theorem}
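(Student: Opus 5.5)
The easy direction comes first. Each \(\cat{M}_{\tau}(L_i,\psi_i)\langle g_i \rangle\) is semisimple by \cref{prop:indecss}. A direct sum of semisimple \(\tau\)-graded categories is again semisimple: shifts are taken componentwise, and the degree 1 part is a direct sum of \(R\)-linear semisimple categories. A \(\tau\)-graded equivalence also transports semisimplicity. Shifts transfer along the equivalence: if \(X \iso FY\) in degree 1, then \(F(Y\langle a \rangle)\) with \(Fr_{Y,a}\) composed with that isomorphism is a shift of \(X\). Moreover \(F^1\) is an \(R\)-linear equivalence, so it preserves \(R\)-linear semisimplicity.

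For the converse, let \(\cat{C}\) be semisimple and fix representatives \((S_i)_{i\in I}\) of simple objects up to general isomorphism. I first check the setup of \cref{lem:tgrad:psi-s}.

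\begin{itemize}
\item \(L_{S}\) is a subgroup. Composing degree \(a\) and degree \(b\) isomorphisms \(S \to S\) gives a degree \(ab\) isomorphism, and inverses have inverse degree.
\item \(L_S \le \ker(\tau)\). Since \(|S\langle a\rangle| = \tau(a)|S|\), a degree 1 isomorphism \(S\langle a \rangle \iso S\) forces \(\tau(a)=1\).
\item Each \(\Hom^a(S\langle h_i\rangle, S\langle h_{ai}\rangle)\) is free of rank one. It is isomorphic to \(\Hom^1(S\langle a h_i\rangle, S\langle h_{ai}\rangle)\) by \cref{rem:shiftdegree}. Both objects are simple, and they are degree 1 isomorphic because \(ah_i L_S = h_{ai}L_S\) gives \(h_{ai}^{-1}ah_i \in L_S\), so this hom-space is \(\End^1 \iso R\).
\item \(\Hom^a(S\langle h_i\rangle, S\langle h_j\rangle)=0\) when \(j\neq ai\). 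The objects \(S\langle ah_i\rangle\) and \(S\langle h_j\rangle\) are non-isomorphic simples, hence disjoint by semisimplicity of \(\cat{C}^1\).
\end{itemize}

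Next I construct the comparison functor. Define \(F_i \colon \cat{S}_{\tau}(L_i,\psi_i)\langle g_i\rangle \to \cat{C}\) by \(R_{h_jL_i} \mapsto S_i\langle h_j\rangle\) and \(e^a_{h_jL_i} \mapsto f^a_{S_i,j}\), extended \(R\)-linearly. It preserves morphism degrees. It preserves object degrees since \(|S_i\langle h_j\rangle| = \tau(h_j)|S_i| = \tau(h_j)g_i\). It preserves identities by the normalisation \(f^1=\id\). Functoriality is exactly \eqref{eq:tgrad:psi-s} compared with the composition rule \(e^a\of e^b = \psi(a,b)^{-1}e^{ab}\) of \cref{def:mtau}. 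It is fully faithful because both sides are rank one or zero in the same places, by the facts checked above, and \(F_i\) sends generators to generators. I then extend to direct sums, using that \(\cat{C}^1\) is additive, to get \(F_i \colon \cat{M}_{\tau}(L_i,\psi_i)\langle g_i\rangle \to \cat{C}\), and assemble \(F = \bigboxplus_i F_i\) on \(\bigboxplus_i \cat{M}_{\tau}(L_i,\psi_i)\langle g_i\rangle\).

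To see that \(F\) is an equivalence, consider full faithfulness across different summands. For \(i\neq i'\), every \(S_i\langle h\rangle\) and \(S_{i'}\langle h'\rangle\) are non-isomorphic simples in \(\cat{C}^1\), because they lie in different general isomorphism classes. So all degree 1 homs between them vanish, and hence all degree \(a\) homs vanish too, by the shift bijections. Full faithfulness then reduces to the simple objects by additivity. For essential surjectivity, every object of \(\cat{C}^1\) is a finite direct sum of simples. Each simple \(T\) is generally isomorphic, via some degree \(a\) isomorphism, to some \(S_i\), so \(T \iso S_i\langle a\rangle\) in degree 1. We have \(a \in h_jL_i\) for some \(j\), and then \(S_i\langle a\rangle \iso S_i\langle h_j\rangle\) in degree 1. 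Finally, a fully faithful, essentially surjective \(\tau\)-graded functor is a \(\tau\)-graded equivalence. For this I check that a quasi-inverse chosen with degree 1 isomorphisms is automatically \(H\)-Hom-graded and degree preserving, using the axiom of choice.

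The step I expect to need most care is the bookkeeping in the functoriality check. The action \(i\mapsto bi\) must match cosets, \(h_{bi}L = bh_iL\), and the evaluation of \(\psi\) at \(h_iL_S\) must agree with the convention \(e^a_{bhL}\of e^b_{hL}\) in \cref{def:mtau}. Lemma \ref{lem:tgrad:psi-s} is set up precisely so that these match, so this should go through once the indices are aligned.
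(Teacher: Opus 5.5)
Your proposal is correct and follows essentially the same route as the paper. Both build the functor componentwise by \(R_{h_jL_i} \mapsto S_i\langle h_j\rangle\) and \(e^a_{h_jL_i} \mapsto f^a_{S_i,j}\), read off functoriality from the defining relation for \(\psi_{S_i}\), and conclude that it is a \(\tau\)-graded equivalence. You are also more explicit than the paper on two points that it compresses into the phrase ``largest semisimple subcategory'': the converse direction, and fullness, meaning the vanishing of off-coset and cross-summand homs, which rests on non-isomorphic simples in \(\cat{C}^1\) being disjoint.
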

\begin{proof}
We can assume \(\cat{C}\) is additive without loss of generality.
Fix a representative collection \((S_i)_{i \in I}\) of simple objects in \(\cat{C}\) up to general isomorphism, even if \(\cat{C}\) is not semisimple.
Set each \(g_i = |S_i|\), \(L_i = L_{S_i}\) and \(\psi_i = \psi_{S_i}\) as in \cref{lem:tgrad:psi-s}.
Fix also coset representatives \((h_{ij})_{j \in J_i}\) for each \(L_i\) so that \(H = \coprod_{j \in J_i} h_{ij}L_i\).

We can then define a \(\tau\)-graded functor \(F \colon \bigboxplus_{i \in I} \cat{M}_{\tau}(L_i,\psi_i)\langle g_i \rangle \to \cat{C}\) componentwise by
\begin{equation*}
R_{h_{ij}L_i} \mapsto S_i\langle h_{ij} \rangle
\qquad
e_{h_{ij}L_i}^a \mapsto f_{S_i,j}^a
\,,
\end{equation*}
using the morphisms \(f_{S_i,j}^a\) also from \cref{lem:tgrad:psi-s}.
Using coset representatives ensures that this is well-defined and maps non-isomorphic objects to non-isomorphic objects.
Clearly \(F\) preserves object and morphism degrees, so \(\tau\)-graded functoriality follows from the composition rule in \cref{lem:tgrad:psi-s} .
Moreover, \(F\) is fully faithful because it maps non-zero morphisms to non-zero morphisms and every \(a\)-degree homset in the source category is at most one-dimensional.
Therefore \(F\) picks out the largest semisimple \(\tau\)-graded subcategory of \(\cat{C}\), which is \(\cat{C}\) if and only if \(\cat{C}\) is semisimple.
\end{proof}
\begin{corollary}
An \(H\)-Hom-graded category is semisimple if and only if it exhibits an \(H\)-Hom-graded equivalence with some \(\bigboxplus_{i \in I} \cat{M}_{(H \toepi 1)}(L_i,\psi_i)\).
\end{corollary}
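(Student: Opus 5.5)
The plan is to specialise \cref{thm:tgrad:struct} to the homomorphism \(\tau \colon H \toepi 1\), using the embedding \(\Cat^H \tomono \Cat_{\tau}\) from the earlier remark. For \(G = 1\), forgetting object degrees is a 2-equivalence \(\Cat_{\tau} \eqv \Cat^H\): every object has degree \(1\), so condition (3) of \cref{def:tgrad} is vacuous, and \(\tau\)-graded functors and transformations are exactly \(H\)-Hom-graded ones. Semisimplicity of an \(H\)-Hom-graded category should be read through this identification, as having shifts and \(\cat{C}^1\) \(R\)-linear semisimple. So I first verify that this 2-equivalence carries \(\tau\)-graded equivalences to \(H\)-Hom-graded equivalences and back. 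This is immediate, since both notions mean a 1-morphism with a pseudo-inverse in the respective 2-category, and the 2-categories coincide once degrees are forgotten.

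Next I apply \cref{thm:tgrad:struct} with \(\tau\) trivial. Here \(\ker(\tau) = H\), so the constraint \(L_i \le \ker(\tau)\) says only that \(L_i\) is an arbitrary subgroup of \(H\). Every \(g_i\) equals \(1\), the unique element of \(G = 1\), so \(\cat{M}_{\tau}(L_i,\psi_i)\langle g_i\rangle = \cat{M}_{(H \toepi 1)}(L_i,\psi_i)\). The theorem then gives that \(\cat{C}\) is semisimple if and only if it is \(\tau\)-graded equivalent to \(\bigboxplus_{i\in I}\cat{M}_{(H\toepi 1)}(L_i,\psi_i)\). Transporting along the 2-equivalence of the first step yields the claimed \(H\)-Hom-graded equivalence.

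The only point needing care is that the direct sum \(\boxplus\) of \(\tau\)-graded categories, and the property of having shifts, agree with their \(H\)-Hom-graded counterparts. Both are defined purely in terms of morphisms and their degrees, so they do not see the trivial object grading.
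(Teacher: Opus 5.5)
Your proposal is correct and follows the paper's approach. The paper gives no separate proof: the corollary is read off from \cref{thm:tgrad:struct} by specialising to \(G=1\), where \(\ker(\tau)=H\) and every \(g_i=1\), and transporting the equivalence along the 2-equivalence \(\Cat_{\tau}\eqv\Cat^H\) (forgetting object degrees) from the earlier remark, exactly as you do.
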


\begin{eg}[\(R\cat{G}_{\tau}^{\oplus} \iso {\FFGMod[R]_{\tau}}\)]
Recall from \cref{eg:ss:grpd} that \(R\cat{G}_{\tau}^{\oplus}\) is semisimple with representative simple objects given by representatives \((g_i)_{i \in I}\) of the coset space \(\im(\tau) \backslash G\).
Each \(g_i\) also has degree \(g_i \in G\), and we have \(g_i = g_i \langle a \rangle = \tau(a)g_i\) if and only if \(a \in \ker(\tau)\)
Composition is induced directly by group multiplication.

Moreover, recall from \cref{eg:ss:modtau} that \(\FFGMod[R]_{\tau}\) also has simple objects indexed up to general isomorphism by the coset space \(\im(\tau)\backslash G\).
Each simple object \(R_{g_i}\) has degree \(g_i\), and we again have \(R_{g_i} = R_{g_i}\langle a \rangle = R_{\tau(a)g_i}\) if and only if \(a \in \ker(\tau)\).
We have
\begin{equation*}
r_{R_{\tau(a)g},a'} \of r_{R_g,a} = r_{R_g,a'a}
\end{equation*}
because the composition of permutations is always another permutation.
Therefore we have \(\tau\)-graded equivalences
\begin{equation*}
R\cat{G}_{\tau}^{\oplus}
\eqv \FFGMod[R]_{\tau}
\eqv \bigboxplus_{\im(\tau)g \in \im(\tau) \backslash G} \cat{M}_{\tau}(\ker(\tau),1)\langle g \rangle
\,.
\end{equation*}
As a special case, \(\tau = \id[H]\) gives \(\cat{M}_{\id[H]}(1,1) \eqv (\FFGMod[R]_H)^{\bullet}\).
\end{eg}

\begin{theorem}
Each \(\tau\)-graded equivalence
\begin{equation*}
\cat{M}_{\tau}(L,\psi)\langle g \rangle \to \cat{M}_{\tau}(L',\psi')\langle g' \rangle
\end{equation*}
corresponds to a choice of group element and normalised 1-cochain
\begin{equation*}
t \in \tau^{-1}(gg'^{-1})
\qquad
\gamma \colon H \to \Hom_{\Set}(H/L,R^{\times})
\end{equation*}
with \(L = tL't^{-1}\) and \(\psi = \psi'^t \cdot d\gamma\), writing \(\psi'^{t} (a,b) (hL') = \psi' (a,b) (thL')\).
The equivalence induced by such data is given by
\begin{equation*}
F_{t,\gamma} \colon
R_{hL} \mapsto R'_{htL'}
\,,\qquad
e_{hL}^a \mapsto \gamma(a)(hL) {e'}_{htL'}^a
\,,
\end{equation*}
using primes to denote the simple objects and basis morphisms in \(\cat{M}_{\tau}(L',\psi')\langle g' \rangle\).
Natural isomorphisms \(F_{t,\gamma} \tonat F_{s,\delta}\) require \(tL' = sL'\) and correspond to 0-cochains \(\eta \colon H/L \to R^{\times}\) with \(\gamma = \delta \cdot d\eta\).
\label{thm:mtau-equiv}
\end{theorem}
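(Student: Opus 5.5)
The plan is to reduce everything to the skeletal subcategory $\cat{S}_{\tau}(L,\psi)\langle g \rangle$. Any $\tau$-graded equivalence $F$ is additive, so it is determined up to natural isomorphism by its restriction to the simples $R_{hL}$ and the basis morphisms $e^a_{hL}$. I would first show that every $F$ is naturally isomorphic to one sending simples to simples. Then I would show that such an $F$ is exactly of the form $F_{t,\gamma}$. Finally I would verify the converse and classify natural isomorphisms.

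\emph{Step 1 (objects).} $F$ is fully faithful on degree $1$ morphisms, so $F(R_{1L})$ is simple in $\cat{M}_{\tau}(L',\psi')\langle g' \rangle^1$. It is therefore degree $1$ isomorphic, hence equal after composing $F$ with a natural isomorphism, to some $R'_{tL'}$. Comparing degrees gives $g = |R_{1L}| = |R'_{tL'}| = \tau(t)g'$, so $t \in \tau^{-1}(gg'^{-1})$. Next, $e^h_{1L}$ is invertible of degree $h$ by \cref{lem:mtau:basis-morphisms}. Hence $F(R_{hL})$ admits a degree $h$ isomorphism from $R'_{tL'}$, and the same replacement lets us assume $F(R_{hL}) = R'_{htL'}$. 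For this to be well defined and injective on objects, we need $hL = kL \iff htL' = ktL'$, which is exactly $L = tL't^{-1}$. Here "$\subseteq$" comes from well-definedness, and "$\supseteq$" from faithfulness via the criterion $\Hom^a(R_{hL},R_{h'L}) \neq 0 \iff ahL = h'L$. Conversely, different choices of representative $t$ in the same coset $tL'$ give the same object assignment.

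\emph{Step 2 (morphisms and the cocycle).} $F(e^a_{hL})$ lies in $\Hom^a(R'_{htL'},R'_{ahtL'}) = R\,{e'}^a_{htL'}$ and is invertible. So $F(e^a_{hL}) = \gamma(a)(hL)\,{e'}^a_{htL'}$ with $\gamma(a)(hL) \in R^{\times}$, and $\gamma(1) = 1$ because $F$ preserves identities. Applying $F$ to $e^a_{bhL} \of e^b_{hL} = \psi(a,b)(hL)^{-1} e^{ab}_{hL}$ and expanding with $\psi'$ gives
\begin{equation*}
\psi(a,b)(hL) = \psi'(a,b)(htL') \cdot \gamma(b)(hL) \cdot \gamma(ab)(hL)^{-1} \cdot \gamma(a)(bhL) \,.
\end{equation*}
This is precisely $\psi = \psi'^t \cdot d\gamma$, with the same coboundary convention as in the definition of $d\psi$. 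Conversely, given $(t,\gamma)$ satisfying these conditions, the same computation shows $F_{t,\gamma}$ is a $\tau$-graded functor; degrees are preserved since $\tau(t)g' = g$. It is bijective on simples, and each degree-$a$ homspace has rank at most one with nonzero basis elements mapped to unit multiples. So $F_{t,\gamma}$ is fully faithful and essentially surjective after additive completion, hence an equivalence.

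\emph{Step 3 (natural isomorphisms).} A natural isomorphism $\eta \colon F_{t,\gamma} \tonat F_{s,\delta}$ has degree $1$ components $R'_{htL'} \to R'_{hsL'}$. These are nonzero only if $htL' = hsL'$, in particular $tL' = sL'$, in which case the two functors agree on objects. Invertibility forces $\eta_{R_{hL}} = \eta(hL)\,\id$ with $\eta(hL) \in R^{\times}$. Naturality against $e^a_{hL}$ then reads $\eta(ahL)\gamma(a)(hL) = \delta(a)(hL)\eta(hL)$, i.e.\ $\gamma = \delta \cdot d\eta$. Conversely, every such $\eta$ defines a natural isomorphism.

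The main obstacle is the bookkeeping in Step 1: making precise that an arbitrary equivalence "corresponds" to some $F_{t,\gamma}$. This only holds up to natural isomorphism, and the correspondence is through the replacement of $F$ by a strictly skeletal representative. The cocycle identity in Step 2 is a routine, if fiddly, check of which coset each $\psi$ and $\gamma$ is evaluated at.
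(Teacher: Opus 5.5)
Your proposal follows the same route as the paper's proof. You pin down $FR_{1L}=R'_{tL'}$ and read off $t\in\tau^{-1}(gg'^{-1})$ from degrees, transport along the invertible $e^h_{1L}$ to get $R_{hL}\mapsto R'_{htL'}$, write $F(e^a_{hL})=\gamma(a)(hL)\,{e'}^a_{htL'}$ and extract the cocycle relation from functoriality, and treat natural isomorphisms as scalar families $\eta(hL)\,\mathrm{id}$. Your explicit replacement of $F$ by a naturally isomorphic functor that sends each simple to some $R'_{kL'}$ on the nose is an improvement. The paper simply asserts $FR_{1L}=R'_{tL'}$, and your step is what makes ``corresponds'' precise, namely up to natural isomorphism, with Step~3 accounting for the remaining identifications.

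Two small slips need fixing.

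First, the inclusion $tL't^{-1}\subseteq L$ (injectivity on simples) comes from fullness, not faithfulness. If $htL'=ktL'$ but $hL\neq kL$, then $F$ must map $\Hom^1(R_{hL},R_{kL})=0$ onto $\Hom^1(R'_{htL'},R'_{ktL'})=R$; this breaks surjectivity, while injectivity is untouched. For instance, with $G=1$, $L=1$, $L'=H$, the functor $R_{hL}\mapsto R'_{H}$, $e^a_{hL}\mapsto {e'}^a_{H}$ is faithful but collapses all simples. The paper's proof makes the same attribution. Since $F$ is an equivalence, the conclusion $L=tL't^{-1}$ is unaffected.

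Second, your displayed identity inverts the $\gamma$-factors. Comparing $F(e^a_{bhL}\circ e^b_{hL})$ with $F(e^a_{bhL})\circ F(e^b_{hL})$ actually gives
\begin{equation*}
\psi(a,b)(hL)=\psi'(a,b)(htL')\cdot\gamma(ab)(hL)\cdot\gamma(a)(bhL)^{-1}\cdot\gamma(b)(hL)^{-1} \,.
\end{equation*}
In the coboundary convention you adopt, $d\gamma(a,b)=\gamma(b)\cdot\gamma(ab)^{-1}\cdot(\gamma(a)\triangleleft b)$, which matches the paper's $d\psi$, this reads $\psi={\psi'}^{t}\cdot(d\gamma)^{-1}$. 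So the stated form $\psi={\psi'}^{t}\cdot d\gamma$ needs one of two harmless fixes. Either reparametrise $\gamma\mapsto\gamma^{-1}$, or use the opposite sign convention for $d$ on $1$-cochains, which the paper never specifies. With the latter choice your Step~3 condition $\gamma=\delta\cdot d\eta$ stands exactly as written.
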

\begin{proof}
Write \(\cat{M} = \cat{M}_{\tau}(L,\psi)\langle g \rangle\) and \(\cat{M}' = \cat{M}_{\tau}(L',\psi')\langle g' \rangle\).
Since \(\cat{M}\) has a single simple object \(R_{1L}\) up to general isomorphism, any functor \(F \colon \cat{M} \to \cat{M}\) is determined on objects by the image of \(R_{1L}\) and on morphisms by the image of \(e_{hL}^a\) for each \(h,a \in H\).

If \(F\) is a \(\tau\)-graded equivalence, it must preserve simplicity and object degrees.
Since \(|R_{1L}| = g\), we must therefore have \(FR_{1L} = R'_{tL'}\) for some \(t \in H\) with \(|R'_{tL'}| = \tau(t)g' = g\); that is, \(t \in \tau^{-1}(gg'^{-1})\).
More generally, \(F\) must map each simple object \(R_{hL}\) to some \(R'_{aL'}\).
Then we have
\begin{equation*}
\delta_{htL', aL'} R {e'}_{tL'}^h
= \Hom_{\cat{M}}^h (R'_{tL'}, R'_{aL'})
\iso \Hom_{\cat{M}}^h (R_{1L}, R_{hL})
= R e_{1L}^h
\end{equation*}
so that \(aL' = htL'\).
Therefore \(F\) is given on objects by \(R_{hL} \mapsto R'_{htL'}\).
Well-definedness requires \(R'_{atL'} = R'_{btL'}\) whenever \(R_{aL} = R_{bL}\).
Writing \(\ell = a^{-1}b\), this is the same as requiring \(t^{-1} \ell t = (at)^{-1} bt \in L'\) whenever \(\ell \in L\); that is, \(L \le t L' t^{-1}\).

On morphisms, \(F\) must now act by \(e_{hL}^a \mapsto \gamma(a)(hL) \cdot {e'}_{htL'}^a\) for some function
\begin{equation*}
\gamma \colon H \to \Hom_{\Set}(H/L,R^{\times}) \,.
\end{equation*}
Functoriality requires \(\gamma(1)(hL) = 1\) and
\begin{equation*}
\begin{split}
\psi(a,b)(hL)^{-1} \cdot \gamma(ab)(hL) \cdot {e'}_{htL'}^{ab}
&= F\big( \psi(a,b)(hL)^{-1} \cdot e_{hL}^{ab} \big) \\
&= F(e_{bhL}^a \of e_{hL}^b) \\
&= Fe_{bhL}^a \of Fe_{hL}^b \\
&= \gamma(a)(bhL) \cdot {e'}_{bhtL'}^a \of \gamma(b)(hL) \cdot {e'}_{htL'}^b \\
&= \gamma(a)(bhL) \cdot \gamma(b)(hL)
\cdot {\psi'}(a,b)(htL')^{-1} \cdot {e'}_{htL'}^{ab} \,,
\end{split}
\end{equation*}
which rearranges to \(\psi = {\psi'}^t \cdot d\gamma\).
Then \(F\) is
\begin{itemize}
\item full automatically, because the image of each \(e_{hL}^a\) is non-zero and all simple objects in \(\cat{M}'\) have one-dimensional endomorphism spaces
\item surjective automatically, because any \(R'_{hL'}\) has \(R'_{hL'} = F_{t,\gamma}(R_{ht^{-1}L})\)
\item faithful if and only if \(atL' = btL'\) implies \(hL = L\); that is, if and only if \(tL't^{-1} \le L\).
\end{itemize}

A \(\tau\)-graded natural isomorphism \(F_{t,\gamma} \tonat F_{s,\delta}\) has degree 1 components of the form
\begin{equation*}
\eta_{R_{hL}}
= \eta(hL) \id[R'_{htL'}]
\colon R'_{htL'} \to R'_{hsL'}
\end{equation*}
for some function \(\eta \colon H/L \to R^{\times}\).
This is well-defined if and only if \(tL' = sL'\), and the naturality square
\begin{equation*}
\begin{tikzcd}[column sep=5em]
F_{s,\gamma} R_{hL} = R'_{hsL'}
\ar[r,"\eta(hL) \id"]
\ar[d,swap,"\gamma(a)(hL) {e'}_{hL'}^a"]
&
F_{s,\delta} R_{hL} = R'_{hsL'}
\ar[d,"\delta(a)(hL) {e'}_{hL'}^a"]
\\
F_{t,\gamma} R_{ahL} = R'_{ahtL'}
\ar[r,"\eta(ahL) \id"]
&
F_{s,\delta} R_{khL} = R'_{ahsL'}
\,,
\end{tikzcd}
\end{equation*}
commutes if and only if \(\gamma = \delta \cdot d\eta\).
\end{proof}

The use (here with \(n=2\)) of \(n\)-cocycles for classifying objects, \((n-1)\)-cochains for classifying 1-morphisms and \((n-2)\)-cochains for classifying 2-morphism can seem reminiscent of Theorem 8.3.7 in \cite{bl2004}.
Note however that Baez and Lauda have instead \(n=3\).
Baez and Lauda are also dealing with 2-groups (which we have generalised to group homomorphisms; see \cref{sec:defs}), but the key difference is that they vary the 2-group whereas we fix the group homomorphism and vary other data.
\section{Relationship with module categories}
\label{sec:2eqv}
In this section we construct an alternate picture of \(\tau\)-graded categories in terms of module categories.
We first recall the notion of \(H\)-module category from \cite{egno2015} and describe their 2-category \(\ModCat[H]\).
In the special case \(G=1\), we then show that shifts on an \(H\)-Hom-graded category \(\cat{C}\) induce an \(H\)-module category structure on \(\cat{C}^1\).
We extend this to a 2-functor
\begin{equation*}
\Cat^{H,\shifts} \to \ModCat[H] \,,
\end{equation*}
where \(\Cat^{H,\shifts}\) is the 2-category of \(H\)-Hom-graded categories with shifts (\cref{def:hasshifts}).
We construct a 2-functor
\begin{equation*}
\ModCat[H] \to \Cat^{H,\shifts}
\end{equation*}
in \cref{sec:2fun2}, and then show these form a 2-equivalence in \cref{sec:2eqv1}.
Finally, in \cref{sec:2eqv2} we define the 2-category \(\ModCat[H]_{\tau}\) and derive the general 2-equivalence \(\Cat_{\tau}^{\shifts} \eqv \ModCat[H]_{\tau}\).
\subsection{Review of \(H\)-module categories}
\label{sec:org269d62a}

The data used in \cref{thm:tgrad:struct} is very similar to that used in the structure theorem for \(H\)-module categories.
The indecomposable \(H\)-module categories are described in \cite[, Example 7.4.10]{egno2015}, with more complete treatments in \cite{ostrik2003} and \cite{natale2017}.
Following \cite[, \S 2.7]{egno2015} we define \(H\)-module categories as an alias for \(R\)-linear \(\Disc(H)\)-module categories.

\begin{definition}
An \defined{\(H\)-module category} \((\cat{C},\alpha)\) is an \(R\)-linear category \(\cat{C}\) together with an \(R\)-linear monoidal functor \(\alpha^{(-)} \colon \Disc(H) \to \Aut(\cat{C})\) called the \defined{\(H\)-action}.
Explicitly, \(\alpha\) is a collection \((\alpha^h)_{h \in H}\) of \(R\)-linear equivalences \(\cat{C} \to \cat{C}\) together with natural isomorphisms \(\epsilon \colon \id[\cat{C}] \tonat \alpha^1\) and \(\mu_{a,b} \colon \alpha^a \alpha^b \tonat \alpha^ab\) making the diagrams
\begin{equation*}
\begin{tikzcd}
\alpha^h
\ar[r,"\epsilon \hof \id"] \ar[d,swap,"\id \hof \epsilon"] \ar[dr,"\id" description]
&
\alpha^1 \alpha^h
\ar[d,"\mu_{1,h}"]
\\
\alpha^h \alpha^1
\ar[r,swap,"\mu_{h,1}"]
&
\alpha^h
\end{tikzcd}
\qquad
\begin{tikzcd}[column sep=4em]
\alpha^a \alpha^b \alpha^c
\ar[r,"\mu_{a,b} \hof \id"] \ar[d,swap,"\id \hof \mu_{bc}"]
&
\alpha^{ab} \alpha^c
\ar[d,"\mu_{ab,c}"]
\\
\alpha^a \alpha^{bc}
\ar[r,swap,"\mu_{a,bc}"]
&
\alpha^{abc}
\end{tikzcd}
\end{equation*}
commute.
When ambiguous, we write \(\epsilon^{\alpha}\) and \(\mu^{\alpha}\).
\end{definition}
\begin{definition}
An \defined{\(H\)-module functor} \((F,s) \colon (\cat{C},\alpha) \to (\cat{D},\beta)\) is an \(R\)-linear functor \(F \colon \cat{C} \to \cat{D}\) together with natural isomorphisms \((s^h \colon \beta^h F \tonat F \alpha^h)_{h \in H}\) making the diagrams
\begin{equation*}
\begin{tikzcd}
&
F
\ar[dl,swap,"\epsilon^{\beta} \hof \id"] \ar[dr,"\id \hof \epsilon^{\alpha}"]
&
\\
\beta^1 F
\ar[rr,swap,"s^1"]
&&
F \alpha^1
\end{tikzcd}
\qquad
\begin{tikzcd}
\beta^a \beta^b F
\ar[r,"\id \hof s^b"] \ar[d,swap,"\mu^{\beta}_{a,b} \hof F"]
&
\beta^a F \alpha^b
\ar[r,"s^a \hof \id"]
&
F \alpha^a \alpha^b
\ar[d,"\id \hof \mu^{\alpha}_{a,b}"]
\\
\beta^{ab} F
\ar[rr,swap,"s^{ab}"]
&&
F \alpha^{ab}
\end{tikzcd}
\end{equation*}
commute.
An \defined{\(H\)-module natural transformation} \((E,r) \tonat (F,s)\) is an \(R\)-linear natural transformation \(\eta \colon E \tonat F\) with commuting
\begin{equation*}
\begin{tikzcd}
\beta^h E
\ar[r,"r^h"] \ar[d,swap,"\beta^h \hof \eta"]
&
E \alpha^h
\ar[d,"\eta \hof \alpha^h"]
\\
\beta^h F
\ar[r,swap,"s^h"]
&
F \alpha^h
\end{tikzcd}
\end{equation*}
for all \(h \in H\).
Compatible \(H\)-module functors compose as \((E,r) \of (F,s) = (EF,rs)\) for
\begin{equation*}
(rs)^h_X = E s^h_X \of r^h_{FX} \,.
\end{equation*}
We write \(\ModCat[H]\) for the 2-category of \(H\)-module categories, \(H\)-module functors and \(H\)-module natural transformations.
\end{definition}
\subsection{Shifts as group actions; the 2-functor \texorpdfstring{\((-)^1 \colon \Cat^{H,\shifts} \to \ModCat[H]\)}{(-)1: CatH}}
\label{sec:orgd9d46cb}

We show now how shifts objects on a \(\tau\)-graded category \(\cat{C}\) correspond directly to an action of \(H\) on \(\cat{C}^1\).
Recall the definitions and notation from \cref{sec:shifts}.

\begin{definition}
Let \(\cat{C}\) be an \(H\)-Hom-graded category.
A \defined{shift functor} for \(\cat{C}\) is a functor
\begin{equation*}
\phi \colon \Disc(H) \to \Aut(\cat{C}^1)
\end{equation*}
(that is, an autoequivalence \(\phi^a \colon \cat{C}^1 \to \cat{C}^1\) for each \(a \in H\)) together with a fixed degree \(a\) isomorphism \(r_{X,a}^{\phi} \colon X \to \phi^a X\) for each \(X \in \cat{C}\) and \(a \in H\) (making each \(\phi^a X\) a shift of \(X\) by \(a\)).

We leave such isomorphisms unlabelled when they appear in commutative diagrams; their labels are clear from the source and target.
Furthermore, unlabeled arrows of the form \(\phi^a X \to \phi^a \phi^b X\) should be read as \(r^{\phi}_{\phi^b X} \of r^{\phi}_{X,b} \of (r^{\phi}_{X,a})^{-1}\) so that the square
\begin{equation*}
\begin{tikzcd}[row sep=.7em]
\phi^a X \ar[r]
&
\phi^a \phi^b X
\\
X \ar[r] \ar[u]
&
\phi^b X \ar[u]
\end{tikzcd}
\end{equation*}
commutes.
\end{definition}

\begin{lemma}
Let \(\cat{C}\) be an \(H\)-Hom-graded category.
Then \(\cat{C}\) has shifts (in the sense of \cref{def:hasshifts}) if and only if it has a shift functor.

In particular, each fixed choice of shifts \((r_{X,a} \colon X \to X\langle a \rangle)_{X \in \cat{C},a \in H}\) defines a shift functor \(\phi\) mapping objects \(X \mapsto X\langle a \rangle\) and morphisms \(f \in \Hom_{\cat{C}^1}(X,Y)\) to \(r_{Y,a} \of f \of r_{X,a}^{-1}\), with each \(r_{X,a} = r_{X,a}^{\phi}\) .
\end{lemma}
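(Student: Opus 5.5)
The lemma has two directions, and the reverse one is immediate. If \(\phi\) is a shift functor, then each \(\phi^a X\), equipped with the fixed degree \(a\) isomorphism \(r^{\phi}_{X,a}\), is by definition a shift of \(X\) by \(a\). So \(\cat{C}\) has shifts in the sense of \cref{def:hasshifts}. Nothing more is needed here.

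For the forward direction I would prove the ``in particular'' statement directly, since it supplies the construction. Fix a choice of shifts \((r_{X,a} \colon X \to X\langle a \rangle)\); this uses the axiom of choice, as permitted by the conventions. Define \(\phi^a X = X\langle a \rangle\) and \(\phi^a f = r_{Y,a} \of f \of r_{X,a}^{-1}\) for \(f \in \Hom^1_{\cat{C}}(X,Y)\). The checks, in order, are as follows.

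\begin{enumerate}
\item \emph{Degrees.} The morphism \(\phi^a f\) has degree \(a \cdot 1 \cdot a^{-1} = 1\), so \(\phi^a\) is an endofunctor of \(\cat{C}^1\).
\item \emph{Linearity and functoriality.} \(R\)-linearity is clear from bilinearity of composition. Functoriality holds because the conjugation telescopes: \(r_{Z,a} g r_{Y,a}^{-1} r_{Y,a} f r_{X,a}^{-1} = r_{Z,a}\, gf\, r_{X,a}^{-1}\), and identities are sent to identities.
\item \emph{Autoequivalence.} I would exhibit \(\phi^{a^{-1}}\) as a quasi-inverse. The components \(r_{X\langle a\rangle,a^{-1}} \of r_{X,a} \colon X \to \phi^{a^{-1}}\phi^a X\) have degree \(a^{-1}a = 1\) and are invertible. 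Naturality in \(X\) follows by the same cancellation as in step 2, and symmetrically for \(\phi^a\phi^{a^{-1}}\). Alternatively, \(\phi^a\) is fully faithful, because conjugation by isomorphisms is a bijection \(\Hom^1(X,Y) \to \Hom^{1}(X\langle a\rangle,Y\langle a\rangle)\) by the remark after the definition of shifts. It is also essentially surjective, since \(Y \iso \phi^a(Y\langle a^{-1}\rangle)\) via a degree 1 isomorphism built from the \(r\)'s.
\item \emph{The functor on \(\Disc(H)\).} Since \(\Disc(H)\) has only identity morphisms, a functor \(\Disc(H) \to \Aut(\cat{C}^1)\) is just the assignment \(a \mapsto \phi^a\). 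No coherence is required at this stage; monoidality is not asked for in the statement.
\end{enumerate}

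Finally, setting \(r^{\phi}_{X,a} := r_{X,a}\) gives the required degree \(a\) isomorphisms, which completes the forward direction.

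The only step with any content is step 3, showing each \(\phi^a\) is an autoequivalence. The main care needed there is to check that the comparison morphisms genuinely have degree 1, so that they live in \(\cat{C}^1\).
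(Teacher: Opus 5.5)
Your proof is correct and takes essentially the same route as the paper. The reverse direction is immediate. The forward direction conjugates by the chosen shift isomorphisms, checks the degree \(a\cdot 1\cdot a^{-1}=1\) and functoriality by cancellation, uses the discreteness of \(H\) as a category, and takes \(\phi^{a^{-1}}\) as a quasi-inverse; the only difference is that you write out the degree \(1\) components \(r_{X\langle a\rangle,a^{-1}}\circ r_{X,a}\), which the paper leaves as ``straightforward''.
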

\begin{proof}
Clearly a shift functor directly gives a choice of shifts.
Conversely, suppose \(\cat{C}\) has fixed shifts and define \(\phi\) as above.
For each morphism \(f\) in \(\cat{C}^1\) we have \(|\phi^a f| = a1a^{-1} = 1\), and functoriality of \(\phi^a\) is given by
\begin{equation*}
\phi^a \id[X]
= r_{X,a} \of \id[X] \of r_{X,a}^{-1}
= r_{X,a} \of r_{X,a}^{-1}
= \id[X\langle a \rangle]
= \id[\phi^a X]
\end{equation*}
and the commuting diagram
\begin{equation*}
\begin{tikzcd}[row sep=.7em]
\phi^a X \ar[rr,"\phi^a (f' \of f)"]
&&
\phi^a Z
\\
X \ar[r,"f'"]
\ar[u] \ar[d]
&
Y \ar[r,"f"]
\ar[d]
&
Z
\ar[u] \ar[d]
\\
\phi^a X \ar[r,swap,"\phi^a f'"]
&
\phi^a Y \ar[r,swap,"\phi^a f"]
&
\phi^a Z
\,.
\end{tikzcd}
\end{equation*}
This trivially extends to a functor \(\phi \colon \Disc(H) \to \Aut(\cat{C}^1)\) by the discreteness of \(\Disc(H)\), and it it is straightforward to show that we have \(\phi^{a^{-1}} \phi^a \iso \id[\cat{C}^1] \iso \phi^a \phi^{a^{-1}}\).
\end{proof}

\begin{proposition}
Shift functors are unique up to natural isomorphism.
Let \(\cat{C}\) be an \(H\)-Hom-graded category with shift functors \(\phi\) and \(\psi\).
Then there is a natural isomorphism \(\phi \iso \psi\).
\end{proposition}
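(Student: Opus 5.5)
The natural isomorphism will be built directly from the fixed shift isomorphisms, in the same way as the uniqueness of shifts in \cref{rem:shiftdegree}. For each \(a \in H\) and \(X \in \cat{C}\), I would set
\begin{equation*}
\theta^a_X := r^{\psi}_{X,a} \of (r^{\phi}_{X,a})^{-1} \colon \phi^a X \to \psi^a X \,.
\end{equation*}
This is a composite of a degree \(a^{-1}\) isomorphism with a degree \(a\) isomorphism, so it is a degree \(1\) isomorphism, that is, an isomorphism in \(\cat{C}^1\).

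The first step is to check that, for fixed \(a\), the family \(\theta^a\) is natural with respect to morphisms \(f \in \Hom_{\cat{C}^1}(X,Y)\). The key observation is that the action of a shift functor on degree 1 morphisms is forced by its shift isomorphisms: it should satisfy \(\phi^a f = r^{\phi}_{Y,a} \of f \of (r^{\phi}_{X,a})^{-1}\), and likewise for \(\psi\). Granting this, the naturality square
\begin{equation*}
\theta^a_Y \of \phi^a f = r^{\psi}_{Y,a} \of f \of (r^{\phi}_{X,a})^{-1} = \psi^a f \of \theta^a_X
\end{equation*}
holds by cancelling the inverse pairs. Because \(\Disc(H)\) is discrete, a natural isomorphism \(\phi \tonat \psi\) of functors \(\Disc(H) \to \Aut(\cat{C}^1)\) amounts to such a family \(\theta^a\) for each \(a\), with no further compatibility to verify.

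The main obstacle is the forcing claim itself. The definition of a shift functor only asks for autoequivalences \(\phi^a\) together with degree \(a\) isomorphisms \(r^{\phi}_{X,a}\). It does not state explicitly that the \(r^{\phi}_{X,a}\) are natural in \(X\), i.e.\ that \(\phi^a f \of r^{\phi}_{X,a} = r^{\phi}_{Y,a} \of f\). If this is intended (as in the preceding lemma, where \(\phi^a\) is defined precisely by conjugation), the argument above goes through. Otherwise I would first replace \(\phi\) by its conjugation functor \(\tilde\phi^a f := r^{\phi}_{Y,a} \of f \of (r^{\phi}_{X,a})^{-1}\). This is the functor produced by the preceding lemma from the shifts \(r^{\phi}\), so it agrees with \(\phi\) on objects and shift isomorphisms, and the construction above gives \(\tilde\phi \iso \tilde\psi\). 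I would then interpret the proposition as being about these canonical shift functors, adding the naturality of \(r^{\phi}\) as the implicit convention.

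Finally, I would note that each \(\theta^a\) is invertible with inverse \(r^{\phi}_{X,a} \of (r^{\psi}_{X,a})^{-1}\), which gives \(\phi \iso \psi\).
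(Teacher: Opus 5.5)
This is essentially the paper's proof: it takes \(\eta^a_X = r^{\phi}_{X,a} \of (r^{\psi}_{X,a})^{-1} \colon \psi^a X \to \phi^a X\) (the inverse of your \(\theta^a_X\)), gets naturality in \(a\) from the discreteness of \(\Disc(H)\), and checks naturality in \(X\) with a diagram whose outer cells commute exactly because \(\phi^a f\) and \(\psi^a f\) are conjugates of \(f\) by the shift isomorphisms. You are right that this conjugation property is not literally part of the definition of a shift functor, and treating it as an explicit convention is the right repair: the paper's proof uses it tacitly, and without it the claim fails already for \(H=1\), where twisting by a non-trivial automorphism of a commutative endomorphism algebra gives a counterexample.
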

\begin{proof}
Suppose \(\cat{C}\) is an \(H\)-Hom-graded category with shift functors \(\phi\) and \(\psi\).
We can define isomorphisms \(\eta^a_X \in \Hom_{\cat{C}^1}(\psi^a X, \phi^a X)\) by the composition
\begin{equation*}
\begin{tikzcd}
\psi^a X
\ar[rr,"\eta^a_X",rounded corners,to path={
  (\tikztostart.north east)
  -- ([yshift=.2em,xshift=1em]\tikztostart.north east)
  -- ([yshift=.2em,xshift=-1em]\tikztotarget.north west) \tikztonodes
  -- (\tikztotarget.north west)
}]
&
X \ar[l] \ar[r]
&
\phi^a X
\,.
\end{tikzcd}
\end{equation*}
Naturality in \(a\) and \(X\) is by discreteness of \(\Disc(H)\) and the commuting diagram
\begin{equation*}
\begin{tikzcd}[row sep=0]
\psi^a X \ar[rrr,"\psi^a f"] \ar[dd,swap,"\eta^a_X"]
&&&
\psi^a Y \ar[dd,"\eta^a_Y"]
\\
&
X \ar[r,"f"]
\ar[ul] \ar[dl]
&
Y
\ar[ur] \ar[dr]
\\
\phi^a X \ar[rrr,swap,"\phi^a f"]
&&&
\phi^a Y
\,.
\end{tikzcd}
\end{equation*}
\end{proof}

\begin{proposition}
Let \(\cat{C}\) be an \(H\)-Hom-graded category with fixed shifts inducing a shift functor \(\phi\).
Then \((\cat{C}^1,\phi)\) defines an \(H\)-module category with \(\epsilon^{\phi}_X = r_{X,1}\) and \(\mu^{\phi}\) given by the composition
\begin{equation*}
\begin{tikzcd}
\phi^a \phi^b X
\ar[rrr,"(\mu^{\phi}_{a,b})_X",rounded corners,to path={
  (\tikztostart.north east)
  -- ([yshift=.2em,xshift=1em]\tikztostart.north east)
  -- ([yshift=.2em,xshift=-1em]\tikztotarget.north west) \tikztonodes
  -- (\tikztotarget.north west)
}]
& \phi^b X \ar[l]
& X \ar[l] \ar[r]
& \phi^{ab} X
\,.
\end{tikzcd}
\end{equation*}
\label{prop:shiftaction}
\end{proposition}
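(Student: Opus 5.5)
We must check three things in turn: \(\epsilon^{\phi}\) and \(\mu^{\phi}\) are well-defined natural isomorphisms in \(\cat{C}^1\), the unit triangle commutes, and the associativity square commutes. The guiding observation is that every component is a composite of shift isomorphisms and their inverses. Degrees multiply along such a composite. For \(\epsilon^{\phi}_X = r_{X,1}\) the degree is \(1\). For \((\mu^{\phi}_{a,b})_X = r_{X,ab} \of r_{X,b}^{-1} \of r_{\phi^b X,a}^{-1}\) it is \(ab \cdot b^{-1} \cdot a^{-1} = 1\). So both are degree \(1\) isomorphisms, i.e. isomorphisms in \(\cat{C}^1\).

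For naturality, take \(f \in \Hom_{\cat{C}^1}(X,Y)\). Recall that \(\phi^a f = r_{Y,a} \of f \of r_{X,a}^{-1}\), so \(\phi^a \phi^b f = r_{\phi^b Y,a} \of r_{Y,b} \of f \of r_{X,b}^{-1} \of r_{\phi^b X,a}^{-1}\). Naturality of \(\epsilon^{\phi}\) is then immediate:
\begin{equation*}
\phi^1 f \of r_{X,1} = r_{Y,1} \of f \,.
\end{equation*}
Naturality of \(\mu^{\phi}\) follows the same way: substitute the formula for \(\phi^a\phi^b f\), and the inner shift isomorphisms cancel in pairs, leaving \(r_{Y,ab} \of f \of (\ldots)^{-1}\) on both sides.

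For the coherence diagrams, the method is to rewrite each arrow as a composite of \(r\)'s and observe that both paths of each diagram reduce to the same composite. In the unit triangle, consider \((\mu_{1,h})_X \of (\epsilon \hof \id)_X\). Here \((\epsilon \hof \id)_X = r_{\phi^h X,1}\), and this cancels against the \(r_{\phi^h X,1}^{-1}\) inside \(\mu_{1,h}\), giving \(r_{X,h} \of r_{X,h}^{-1} = \id\). The other leg has \((\id \hof \epsilon)_X = \phi^h(r_{X,1}) = r_{\phi^1 X,h} \of r_{X,1} \of r_{X,h}^{-1}\). Composing with \(\mu_{h,1}\) gives
\begin{equation*}
r_{X,h} \of r_{X,1}^{-1} \of r_{\phi^1X,h}^{-1} \of r_{\phi^1X,h} \of r_{X,1} \of r_{X,h}^{-1} = \id \,.
\end{equation*}
For associativity we compute both paths \(\phi^a\phi^b\phi^c X \to \phi^{abc} X\). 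The route through \(\mu_{a,b} \hof \id\) uses the component \((\mu_{a,b})_{\phi^c X}\), followed by \(\mu_{ab,c}\). The route through \(\id \hof \mu_{b,c}\) uses \(\phi^a((\mu_{b,c})_X)\), which expands as \(r_{\phi^{bc}X,a} \of (\mu_{b,c})_X \of r_{\phi^b\phi^c X,a}^{-1}\), followed by \(\mu_{a,bc}\).

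Expanding everything, both paths collapse to
\begin{equation*}
r_{X,abc} \of r_{X,c}^{-1} \of r_{\phi^c X,b}^{-1} \of r_{\phi^b\phi^c X,a}^{-1} \,.
\end{equation*}
The main obstacle is just this bookkeeping. Cancellation occurs only between \(r\)'s with matching object subscripts, so one must track the subscripts carefully, in particular \(\phi^c X\) versus \(\phi^{bc} X\) and \(\phi^{ab}\) versus \(\phi^a\phi^b\). I would organise it with the paper's convention that every unlabelled arrow in the diagrams is a shift isomorphism. The whole diagram then lives in the groupoid generated by the \(r\)'s, with all objects connected to \(X\). In that groupoid each path from \(\phi^a\phi^b\phi^c X\) to \(\phi^{abc} X\) is an explicit word in the \(r\)'s, so after free cancellation both words are literally the same.

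Finally, the \(\alpha^h\) must be autoequivalences, which is supplied by the preceding lemma. \(R\)-linearity of each \(\phi^a\) is clear from its conjugation formula. Hence \((\cat{C}^1,\phi)\) is an \(H\)-module category.
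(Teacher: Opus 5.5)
Your proposal is correct and takes essentially the paper's route: the paper's commuting diagrams for naturality and coherence of \(\mu^{\phi}\), and its one-line check for \(\epsilon^{\phi}\), encode the same strategy of writing every component as a composite of the fixed shift isomorphisms and cancelling. Your explicit reductions are right: both unit legs give the identity, and both associativity paths give \(r_{X,abc} \circ r_{X,c}^{-1} \circ r_{\phi^c X,b}^{-1} \circ r_{\phi^b\phi^c X,a}^{-1}\). The only slip is cosmetic: \(\alpha^h\) in your last paragraph should read \(\phi^h\).
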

\begin{proof}
Naturality and coherence for \(\mu^{\phi}\) are by the commuting diagrams
\begin{equation*}
\begin{tikzcd}
\phi^a \phi^b X
\ar[rrr,"(\mu^{\phi}_{a,b})_X"]
\ar[ddd,swap,"\phi^a \phi^b f" description]
&[-2em]&[-1em]&[-2em]
\phi^{ab} X
\ar[ddd,"\phi^{ab} f" description]
\\[-1.5em]
& \phi^b X
\ar[ul] \ar[d,"\phi^b f"]
& X
\ar[l] \ar[ur] \ar[d,"f"]
\\
& \phi^b Y \ar[dl]
& Y \ar[l] \ar[dr]
\\[-1.5em]
\phi^a \phi^b Y
\ar[rrr,"(\mu^{\phi}_{a,b})_Y"]
&&& \phi^{ab} Y
\end{tikzcd}
\qquad
\begin{tikzcd}
\phi^a \phi^b \phi^c X
\ar[rrr,"(\mu^{\phi}_{a,b})_{\phi^c X}"]
\ar[ddd,swap,"\phi^a (\mu^{\phi}_{b,c})_X" description]
&[-2em]&[-1em]&[-2em]
\phi^{ab} \phi^c X
\ar[ddd,"(\mu^{\phi}_{ab,c})_X" description]
\\[-1.5em]
&
\phi^b \phi^c X \ar[ul]
\ar[d,"(\mu^{\phi}_{b,c})_X"]
&
\phi^c X \ar[l] \ar[ur]
\\
&
\phi^{bc} X \ar[dl]
&
X \ar[u] \ar[l] \ar[dr]
\\[-1.5em]
\phi^a \phi^{bc} X
\ar[rrr,"(\mu^{\phi}_{a,bc})_X"]
&&&
\phi^{abc} X
\,.
\end{tikzcd}
\end{equation*}
Naturality of \(r_{X,1}\) is by \(\phi^1 f \of r_{X,1} = r_{Y,1} \of f \of r_{X,1}^{-1} \of r_{X,1}\), with coherence following directly from the definition of \(\mu^{\phi}\).
\end{proof}

\begin{lemma}
Let \(F \colon \cat{C} \to \cat{D}\) be an \(H\)-Hom-graded functor, and fix \(a \in H\).
Suppose \(\cat{C}\) and \(\cat{D}\) have fixed shifts inducing respective shift functors \(\phi\) and \(\psi\).
Then
\begin{equation*}
(s_F^a)_X = Fr_{X,a} \of r_{FX,a}^{-1}
\end{equation*}
defines an \(R\)-linear natural isomorphism \(s_F^a \colon \psi^a F^1 \tonat F^1 \phi^a\).
Moreover, for all \(a,b \in H\) there is a commuting diagram
\begin{equation*}
\begin{tikzcd}[column sep=3em]
\psi^a \psi^b  F^1
\ar[r,"\id \hof s_F^b"]
\ar[d,swap,"\mu^{\psi}_{a,b} \hof \id"]
&
\psi^a F^1 \phi^b
\ar[r,"s_F^a \hof \id"]
&
F^1 \phi^a \phi^b
\ar[d,"\id \hof \mu^{\phi}_{a,b}"]
\\
\psi^{ab} F^1
\ar[rr,swap,"s_F^{ab}"]
&&
F^1 \phi^{ab}
\end{tikzcd}
\end{equation*}
so that \((F^1,s)\) defines an \(H\)-module functor \((\cat{C}^1,\phi) \to (\cat{D}^1,\psi)\).
\label{lem:shiftaction-functor}
\end{lemma}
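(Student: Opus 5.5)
The plan is to verify, in order, that each component \((s_F^a)_X\) is a well-defined degree 1 isomorphism, that the family is natural in \(X\), and finally that the coherence square commutes. All three steps use only \(H\)-Hom-graded functoriality of \(F\) together with the defining formulas for \(\phi\), \(\psi\) and \(\mu\) from \cref{prop:shiftaction}.

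\textbf{Components.} Since \(F\) preserves morphism degrees, \(Fr_{X,a}\) is a degree \(a\) isomorphism \(FX \to F(\phi^a X)\), with inverse \(F(r_{X,a}^{-1})\). Also \(r_{FX,a}^{-1} \colon \psi^a FX \to FX\) has degree \(a^{-1}\). Hence the composite \((s_F^a)_X\) is an isomorphism of degree \(aa^{-1}=1\), that is, an isomorphism in \(\cat{D}^1\) from \(\psi^a F^1 X\) to \(F^1\phi^a X\). This is exactly the remark following \cref{def:hasshifts}.

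\textbf{Naturality.} Fix \(f \in \Hom_{\cat{C}^1}(X,Y)\). We need
\[
F(\phi^a f) \of Fr_{X,a} \of r_{FX,a}^{-1} = Fr_{Y,a} \of r_{FY,a}^{-1} \of \psi^a(Ff).
\]
Substitute \(\phi^a f = r_{Y,a} \of f \of r_{X,a}^{-1}\) and \(\psi^a(Ff) = r_{FY,a} \of Ff \of r_{FX,a}^{-1}\), then apply functoriality of \(F\). Both sides reduce to \(Fr_{Y,a} \of Ff \of r_{FX,a}^{-1}\).

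\textbf{Coherence square.} Evaluate at \(X\) and write every arrow as a composite of shift isomorphisms and their \(F\)-images. By \cref{prop:shiftaction},
\[
(\mu^{\phi}_{a,b})_X = r_{X,ab} \of r_{X,b}^{-1} \of r_{\phi^b X,a}^{-1},
\]
and similarly for \(\mu^{\psi}\) at \(FX\). The only place needing care is the top-left arrow \(\psi^a\big((s^b_F)_X\big)\). Expanding it with the definition of \(\psi^a\) on morphisms gives
\[
r_{F\phi^b X,a} \of Fr_{X,b} \of r_{FX,b}^{-1} \of r_{\psi^b FX,a}^{-1}.
\]
The top path then becomes
\[
F(\mu^{\phi}_{a,b})_X \of Fr_{\phi^b X,a} \of r_{F\phi^b X,a}^{-1} \of r_{F\phi^b X,a} \of Fr_{X,b} \of r_{FX,b}^{-1} \of r_{\psi^b FX,a}^{-1}.
\]
The middle pair \(r_{F\phi^bX,a}^{-1} \of r_{F\phi^bX,a}\) cancels. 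Then
\[
F(\mu^{\phi}_{a,b})_X \of Fr_{\phi^bX,a} \of Fr_{X,b} = Fr_{X,ab}
\]
by functoriality and the formula for \(\mu^{\phi}\). The top path therefore equals
\[
Fr_{X,ab} \of r_{FX,b}^{-1} \of r_{\psi^b FX,a}^{-1}.
\]
The bottom path is
\[
Fr_{X,ab} \of r_{FX,ab}^{-1} \of r_{FX,ab} \of r_{FX,b}^{-1} \of r_{\psi^bFX,a}^{-1},
\]
which simplifies to the same expression, so the square commutes.

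\textbf{Unit axiom.} The unit triangle with \(\epsilon = r_{-,1}\) is checked the same way: \((s^1_F)_X \of r_{FX,1} = Fr_{X,1}\) directly. Hence \((F^1,s)\) is an \(H\)-module functor.

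The main obstacle is purely bookkeeping: correctly expanding \(\psi^a\) applied to the component \(s^b_F\), which introduces conjugation by \(r_{-,a}\) at the right objects, and tracking the degrees \(a\), \(b\), \(ab\) so that the cancellations are legitimate. Once that term is written out, the square collapses by functoriality of \(F\) alone.
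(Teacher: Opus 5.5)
Your proposal is correct and follows essentially the same route as the paper: the paper proves naturality and the coherence square with commuting diagrams that break every arrow into shift isomorphisms out of $FX$ (or $X$) and their $F$-images, which are exactly the cancellations you carry out algebraically. Your explicit check of the unit triangle $(s^1_F)_X \circ r_{FX,1} = Fr_{X,1}$ is a small addition, since the paper leaves that axiom of an $H$-module functor implicit.
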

\begin{proof}
Naturality and coherence are by the commuting diagrams
\begin{equation*}
\begin{tikzcd}[column sep=.7em]
\psi^a FX
\ar[rr,"(s_F^a)_X"] \ar[ddd,swap,"\psi^a Ff" description]
&&
F \phi^a X
\ar[ddd,"F \phi^a f" description]
\\[-2em]
&
FX \ar[ul] \ar[ur]
\ar[d,"Ff"]
\\
&
FY \ar[dl] \ar[dr]
\\[-2em]
\psi^a FY
\ar[rr,swap,"(s_F^a)_Y"]
&&
F \phi^a Y
\end{tikzcd}
\qquad
\begin{tikzcd}
\psi^a \psi^b  FX
\ar[rr,"\psi^a (s_F^b)_X"]
\ar[ddd,swap,"(\mu^{\psi}_{a,b})_{FX}" description]
&[-2em]&[-2em]
\psi^a F \phi^b X
\ar[rr,"(s_F^a)_{\phi^b X}"]
&[-2em]&[-2em]
F \phi^a \phi^b X
\ar[ddd,"F (\mu^{\phi}_{a,b})_X" description]
\\[-1em]
&
\psi^b FX \ar[ul]
\ar[rr,"(s_F^b)_X"]
&&
F \phi^b X \ar[ul] \ar[ur]
\\[-2em]
&&
FX \ar[ul] \ar[ur] \ar[dll] \ar[drr]
\\[-2em]
\psi^{ab} FX
\ar[rrrr,swap,"(s_F^{ab})_X"]
&&&&
F \phi^{ab} X
\end{tikzcd}
\end{equation*}
for \(f\) of degree 1.
\end{proof}

\begin{proposition}
Then there is a 2-functor \((-)^1 \colon \Cat^{H,\shifts} \to \ModCat[H]\) sending
\begin{itemize}
\item each object \(\cat{C}\) to \((\cat{C}^1,\phi)\) as in \cref{prop:shiftaction}
\item each 1-morphism \(F\) to \((F^1,s_F)\) as in \cref{lem:shiftaction-functor}
\item each 2-morphism \(\eta\) to \(\eta^1\).
\end{itemize}
\label{prop:first2fun}
\end{proposition}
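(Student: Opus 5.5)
Most of the work has already been done by \cref{prop:shiftaction} and \cref{lem:shiftaction-functor}. These show that the assignments on objects and 1-morphisms land in \(\ModCat[H]\). What remains is threefold:
\begin{enumerate}
\item show that \((F^1,s_F)\) also satisfies the unit axiom for \(H\)-module functors;
\item show that \(\eta^1\) is an \(H\)-module natural transformation;
\item check strict 2-functoriality, i.e.\ preservation of identities and of vertical and horizontal composition.
\end{enumerate}
Throughout, we fix once and for all a choice of shifts on each object of \(\Cat^{H,\shifts}\), and hence a shift functor, so that the assignment on objects is well-defined.

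\textbf{The unit axiom.} We need \(s_F^1 \of (\epsilon^{\psi} \hof F^1) = F^1 \hof \epsilon^{\phi}\). On components the left side is
\[
Fr_{X,1} \of r_{FX,1}^{-1} \of r_{FX,1} = Fr_{X,1},
\]
which is exactly the right side.

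\textbf{Module natural transformations.} Let \(\eta \colon E \tonat F\) be an \(H\)-Hom-graded natural transformation. Its components have degree 1, so \(\eta^1\) is \(R\)-linear natural. We must show
\[
(\eta \hof \phi^a) \of s_E^a = s_F^a \of (\psi^a \hof \eta).
\]
On components, the left side is \(\eta_{\phi^a X} \of Er_{X,a} \of r_{EX,a}^{-1}\). Naturality of \(\eta\) applied to the degree \(a\) morphism \(r_{X,a}\) gives
\[
\eta_{\phi^a X} \of Er_{X,a} = Fr_{X,a} \of \eta_X.
\]
The right side is \(Fr_{X,a} \of r_{FX,a}^{-1} \of r_{FX,a} \of \eta_X \of r_{EX,a}^{-1}\), since \(\psi^a\eta_X = r_{FX,a} \of \eta_X \of r_{EX,a}^{-1}\) by definition. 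The middle pair cancels, so both sides agree. This is the only place where full \(H\)-Hom-graded naturality (rather than naturality in degree 1 only) is used.

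\textbf{Strict 2-functoriality.} On 2-morphisms, restriction to degree 1 clearly preserves vertical composition, horizontal composition and identities. The identity 1-morphism goes to \((\id,s_{\id})\) with
\[
(s_{\id}^a)_X = r_{X,a} \of r_{X,a}^{-1} = \id,
\]
which is the identity \(H\)-module functor.

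\textbf{Composition of 1-morphisms.} This is the main point to check. We need \(s_{EF} = s_E s_F\) in the sense of the composition rule \((rs)^h_X = E s^h_X \of r^h_{FX}\). On components, \((s_E s_F)^a_X\) equals
\[
E\big(Fr_{X,a} \of r_{FX,a}^{-1}\big) \of Er_{FX,a} \of r_{EFX,a}^{-1} = EFr_{X,a} \of r_{EFX,a}^{-1} = (s_{EF}^a)_X,
\]
using functoriality of \(E\) on the degree \(a\) morphisms involved. So composition is preserved strictly, not just up to isomorphism, and \((-)^1\) is a genuine 2-functor.

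The only subtle points are therefore bookkeeping: using the correct order in the composition rule for \(H\)-module functors, and keeping track of the fact that the naturality of \(\eta\) used in the module-transformation step is naturality with respect to degree \(a\) morphisms.
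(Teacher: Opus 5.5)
Your proposal is correct and follows the paper's proof essentially step for step. Like the paper, you check $H$-module naturality of $\eta^1$ by applying naturality of $\eta$ to the degree-$a$ shift $r_{X,a}$, you get $s_{\id}=\id$ and $s_E s_F = s_{EF}$ by cancelling shift isomorphisms, and you note that restriction automatically preserves the compositions of 2-morphisms. Your explicit check of the unit axiom $s_F^1 \of (\epsilon^{\psi} \hof F^1) = F^1 \hof \epsilon^{\phi}$ is a small welcome addition, since the paper's \cref{lem:shiftaction-functor} only verifies the multiplicativity diagram.
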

\begin{proof}
\Cref{prop:shiftaction,lem:shiftaction-functor} imply that this is well-defined on objects and 1-morphisms.
On 2-morphisms, naturality gives the commuting square
\begin{equation*}
\begin{tikzcd}
\psi^a EX
\ar[rr,"(s_E^a)_X"] \ar[ddd,swap,"\psi^a \eta_X"]
&& E \phi^a X
\ar[ddd,"\eta_{\phi^a X}"]
\\[-2em]
& EX \ar[ul] \ar[ur]
\ar[d,"\eta_X"]
\\
& FX \ar[dl] \ar[dr]
\\[-2em]
\psi^a FX
\ar[rr,swap,"(s_F^a)_X"]
&& F \phi^a X
\end{tikzcd}
\end{equation*}
so that \(\eta^1\) is indeed an \(H\)-module natural transformation and \((-)^1\) is well-defined.
We have \((s^{\id[\cat{C}]})_X = r_{X,a} \of r_{X,a}^{-1} = \id[\phi^a]\) so that \(\id[\cat{C}]^1 = \id[\cat{C}^1]\), and
\begin{equation*}
\begin{split}
(s_{F'} s_F)_X^a
&= F'(s_F^a)_X \of (s_{F'}^a)_{FX} \\
&= F'Fr_{X,a} \of F'r_{FX,a}^{-1} \of F'r_{FX,a} \of r_{F'FX,a}^{-1} \\
&= F'Fr_{X,a} \of \of r_{F'FX,a}^{-1} \\
&= (s_{F'F}^a)_X
\end{split}
\end{equation*}
so that \(F'^1 F^1 = (F'F)^1\), hence \((-)^1\) is a functor of the underlying 1-categories.
Since \((-)^1\) acts on 2-morphisms only by restricting the source and target, it automatically preserves vertical composition, horizontal composition and identity 2-morphisms.
Therefore \((-)^1\) is a 2-functor.
\end{proof}
\subsection{The 2-functor \texorpdfstring{\((-)^{\bullet} \colon \ModCat[H] \to \Cat^{H,\shifts}\)}{(-)•: H-ModCat → CatH}}
\label{sec:2fun2}
The reverse direction --- building \(H\)-Hom-graded categories from \(H\)-module categories --- is harder because we need to add new morphisms.
Inspired by the isomorphisms \(\Hom^h(X,Y) \iso \Hom^1(X\langle h \rangle, Y)\) in a Hom-graded category, we can keep track of morphism degrees by changing their source objects.
However, this means we must also change the way in which morphisms are composed.

\begin{definition}
Let \((\cat{C},\alpha)\) be an \(H\)-module category.
We write \((\cat{C},\alpha)^{\bullet}\) for the \(H\)-Hom-graded category with
\begin{itemize}
\item objects the same as \(\cat{C}\)
\item degree \(h\) morphisms given by \(\Hom_{(\cat{C},\alpha)^{\bullet}}^h(X,Y) = \Hom_{\cat{C}}(\alpha^h X, Y)\)
\item composition given by
 \begin{equation*}
 \begin{tikzcd}
 \alpha^{h'h} X
 \ar[rrr,"f' \of^{\bullet} f",rounded corners,to path={
   (\tikztostart.north east)
   -- ([yshift=.2em,xshift=1em]\tikztostart.north east)
   -- ([yshift=.2em,xshift=-1em]\tikztotarget.north west) \tikztonodes
   -- (\tikztotarget.north west)
 }]
 \ar[r,swap,"(\mu_{h',h})_X^{-1}"]
 &[2em]
 \alpha^{h'} \alpha^h X
 \ar[r,swap,"\alpha^{h'} f"]
 &
 \alpha^{h'} Y
 \ar[r,swap,"f'"]
 &[-1em]
 Z \vphantom{\alpha^{h'h} X}
 \end{tikzcd}
\end{equation*} 
for morphisms \(f \in \Hom_{(\cat{C},\alpha)^{\bullet}}^h(X,Y)\) and \(f' \in \Hom_{(\cat{C},\alpha)^{\bullet}}^{h'}(Y,Z)\)
\item identities given by \(\id[X]^{\bullet} = \epsilon_X^{-1} \in \Hom_{\cat{C}}(\alpha^1 X, X) = \Hom_{(\cat{C},\alpha)^{\bullet}}^1(X,X)\).
\end{itemize}
\label{def:2bullet-0}
\end{definition}

The composition \(\of^{\bullet}\) preserves morphism degrees by definition, and identitities are by
\begin{equation*}
\begin{tikzcd}
\alpha^h X
\ar[r,shift left,"(\mu_{1,h})_X^{-1}"]
\ar[rrr, "\epsilon_Y^{-1} \of^{\bullet} f", rounded corners, to path={
  (\tikztostart.north)
  -- ([yshift=1.2em]\tikztostart.north)
  -- ([yshift=1.2em]\tikztotarget.north) \tikztonodes
  -- (\tikztotarget.north)
}]
\ar[rrr, swap, "f", rounded corners, to path={
  (\tikztostart.south)
  -- ([yshift=-1.2em]\tikztostart.south)
  -- ([yshift=-1.2em]\tikztotarget.south) \tikztonodes
  -- (\tikztotarget.south)
}]
&[1em]
\alpha^1 \alpha^h X
\ar[l,shift left,"\epsilon_{\alpha^h X}^{-1}"]
\ar[r,"\alpha^1 f"]
& \alpha^1 Y
\ar[r,"\epsilon_Y^{-1}"]
& Y
\end{tikzcd}
\end{equation*}
\begin{equation*}
\begin{tikzcd}
\alpha^h X
\ar[r,shift left,"(\mu_{h,1})_X^{-1}"]
\ar[rrr, "f \of^{\bullet} \epsilon_X^{-1}", rounded corners, to path={
  (\tikztostart.north)
  -- ([yshift=1.2em]\tikztostart.north)
  -- ([yshift=1.2em]\tikztotarget.north) \tikztonodes
  -- (\tikztotarget.north)
}]
\ar[rrr, swap, "f", rounded corners, to path={
  (\tikztostart.south)
  -- ([yshift=-1.2em]\tikztostart.south)
  -- ([yshift=-1.2em]\tikztotarget.south) \tikztonodes
  -- (\tikztotarget.south)
}]
&[1em]
\alpha^h \alpha^1 X
\ar[r,shift left,"\alpha^h \epsilon_X^{-1}"]
&[1em]
\alpha^h X
\ar[r,"f"]
\ar[l,shift left,"(\mu^{-1}_{h,1})_X"]
& Y
\,.
\end{tikzcd}
\end{equation*}
For homogeneous morphisms \(f'', f', f\) of respective degrees \(a,b,c\), associativity is by the commutative diagram
\begin{equation*}
\begin{tikzcd}
&[1em]
\alpha^{ab} \alpha^c X
\ar[d,"(\mu_{a,b})_{\alpha^h X}^{-1}"] \ar[r,"\alpha^{ab} f"]
&[4em]
\alpha^{ab} Y
\ar[dr,near start,"f'' \of^{\bullet} f'"] \ar[d,swap,"(\mu_{a,b})_Y^{-1}"]
&[1em]
\\
\alpha^{abc} X
\ar[rrr, "(f'' \of^{\bullet} f') \of^{\bullet} f", rounded corners, to path={
  ([xshift=-.5em]\tikztostart.north)
  -- ([yshift=4.5em,xshift=-.5em]\tikztostart.north)
  -- ([yshift=4.5em]\tikztotarget.north) \tikztonodes
  -- (\tikztotarget.north)
}]
\ar[rrr, swap, "f'' \of^{\bullet} (f' \of^{\bullet} f)", rounded corners, to path={
  ([xshift=-.5em]\tikztostart.south)
  -- ([yshift=-4.5em,xshift=-.5em]\tikztostart.south)
  -- ([yshift=-4.5em]\tikztotarget.south) \tikztonodes
  -- (\tikztotarget.south)
}]
\ar[ur,near end,"(\mu_{ab,c})_X^{-1}"]
\ar[dr,near end,swap,"(\mu_{a,bc})_X^{-1}"]
& \alpha^a \alpha^b \alpha^c X
\ar[r,"\alpha^a \alpha^b f"]
& \alpha^a \alpha^b Y
\ar[d,swap,"\alpha^a f'"]
& W
\\
& \alpha^a \alpha^{bc} X
\ar[u,swap,"\alpha^a (\mu_{b,c})_X^{-1}"] \ar[r,swap,"\alpha^a (f' \of^{\bullet} f)"]
& \alpha^a Z
\ar[ur,near start,swap,"f''"]
\,.
\end{tikzcd}
\end{equation*}

\begin{lemma}
For each \(H\)-module category \((\cat{C},\alpha)\), the \(H\)-Hom-graded category \((\cat{C},\alpha)^{\bullet}\) has shifts given by \(X\langle a \rangle = \alpha^a X\) together with the isomorphisms
\begin{align*}
r^{\alpha}_{X,a}
&= \id[\alpha^a X]
&\in\ &
\Hom_{\cat{C}}(\alpha^a X, \alpha^a X)
= \Hom_{(\cat{C},\alpha)^{\bullet}}^a(X, \alpha^a X)
\\
(r^{\alpha}_{X,a})^{-1}
&= \epsilon_X^{-1} \of (\mu_{a^{-1},a})_X
&\in\ &
\Hom_{\cat{C}}(\alpha^{a^{-1}} \alpha^a X, X)
= \Hom_{(\cat{C},\alpha)^{\bullet}}^{a^{-1}}(\alpha^a X, X)
\,.
\end{align*}
\label{lem:2bullet-shifts}
\end{lemma}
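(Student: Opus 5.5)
The plan is a direct check that the two given morphisms are mutually inverse with respect to the composition \(\of^{\bullet}\) of \cref{def:2bullet-0}. First I will confirm the degrees. The identity \(\id[\alpha^a X]\) lies in \(\Hom_{\cat{C}}(\alpha^a X,\alpha^a X)=\Hom^a_{(\cat{C},\alpha)^{\bullet}}(X,\alpha^a X)\), so \(r^{\alpha}_{X,a}\) has degree \(a\). The proposed inverse \(\epsilon_X^{-1}\of(\mu_{a^{-1},a})_X\) has source \(\alpha^{a^{-1}}\alpha^a X\) and target \(X\), so it lies in \(\Hom^{a^{-1}}_{(\cat{C},\alpha)^{\bullet}}(\alpha^a X,X)\); implicitly this uses \(\alpha^{a^{-1}a}=\alpha^1\). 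So only the two composites remain to be checked.

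For the composite \((r^{\alpha}_{X,a})^{-1}\of^{\bullet} r^{\alpha}_{X,a}\), which has degree \(a^{-1}a=1\), the definition of \(\of^{\bullet}\) gives
\begin{equation*}
\epsilon_X^{-1}\of(\mu_{a^{-1},a})_X\of\alpha^{a^{-1}}(\id[\alpha^a X])\of(\mu_{a^{-1},a})_X^{-1}=\epsilon_X^{-1},
\end{equation*}
and \(\epsilon_X^{-1}\) is exactly \(\id[X]^{\bullet}\). This direction is immediate.

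For the other composite \(r^{\alpha}_{X,a}\of^{\bullet}(r^{\alpha}_{X,a})^{-1}\), which has degree \(aa^{-1}=1\), the definition gives
\begin{equation*}
\alpha^a\epsilon_X^{-1}\of\alpha^a(\mu_{a^{-1},a})_X\of(\mu_{a,a^{-1}})_{\alpha^a X}^{-1}\colon\alpha^1\alpha^a X\to\alpha^a X,
\end{equation*}
and this must equal \(\epsilon_{\alpha^a X}^{-1}\). This is the main step. I will use the associativity coherence square for the triple \((a,a^{-1},a)\):
\begin{equation*}
(\mu_{1,a})_X\of(\mu_{a,a^{-1}})_{\alpha^a X}=(\mu_{a,1})_X\of\alpha^a(\mu_{a^{-1},a})_X.
\end{equation*}
Rearranging gives \(\alpha^a(\mu_{a^{-1},a})_X\of(\mu_{a,a^{-1}})^{-1}_{\alpha^a X}=(\mu_{a,1})_X^{-1}\of(\mu_{1,a})_X\). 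The unit triangle gives \((\mu_{1,a})_X=\epsilon_{\alpha^a X}^{-1}\) and \((\mu_{a,1})_X^{-1}=\alpha^a\epsilon_X\). Substituting these, the composite becomes
\begin{equation*}
\alpha^a\epsilon_X^{-1}\of\alpha^a\epsilon_X\of\epsilon_{\alpha^a X}^{-1}=\epsilon_{\alpha^a X}^{-1},
\end{equation*}
as required.

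The only obstacle is bookkeeping. I need to keep track of which whiskering (\(\epsilon\hof\id\) or \(\id\hof\epsilon\)) corresponds to each leg of the unit triangle, and I need to write the naturality-indexed components correctly when instantiating the pentagon-free associativity square at \((a,a^{-1},a)\).
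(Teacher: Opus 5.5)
Your proposal is correct and is essentially the paper's proof. The paper also checks the two \(\of^{\bullet}\)-composites directly: the first collapses at once to \(\epsilon_X^{-1} = \id[X]^{\bullet}\), and the second reduces to \(\epsilon_{\alpha^a X}^{-1}\) using the associativity coherence at \((a,a^{-1},a)\) together with the unit identities \((\mu_{1,a})_X = \epsilon_{\alpha^a X}^{-1}\) and \((\mu_{a,1})_X = \alpha^a \epsilon_X^{-1}\), which the paper presents as two commutative diagrams.
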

\begin{proof}
These morphisms have the correct degrees, sources and targets by definition.
Invertibility is by the commutative diagrams
\begin{equation*}
\begin{tikzcd}[row sep=1em]
\alpha^1 X
\ar[rrrr,"{
  (\epsilon_X^{-1} \of (\mu_{a^{-1},a})_X) \of^{\bullet} \id[\alpha^a X]
}",rounded corners,to path={
  (\tikztostart.north)
  -- ([yshift=1.2em]\tikztostart.north)
  -- ([yshift=1.2em]\tikztotarget.north) \tikztonodes
  -- (\tikztotarget.north)
}]
\ar[rrrr,swap,"{
  \epsilon_X^{-1} \ =\ \id[X]^{\bullet}
}",rounded corners,to path={
  (\tikztostart.south)
  -- ([yshift=-.5em]\tikztostart.south)
  -- ([yshift=-.5em]\tikztotarget.south) \tikztonodes
  -- (\tikztotarget.south)
}]
\ar[r,"(\mu_{a^{-1},a})_X^{-1}"]
&[3em]
\alpha^{a^{-1}} \alpha^a X
\ar[r,"{\alpha^{a^{-1}} \id[\alpha^a X]}"]
&[3em]
\alpha^{a^{-1}} \alpha^a X
\ar[r,"(\mu_{a^{-1},a})_X^{-1}"]
&[2em]
X
\ar[r,"\epsilon_X^{-1}"]
&
\alpha^a X
\end{tikzcd}
\end{equation*}
\begin{equation*}
\begin{tikzcd}[row sep=1em]
\alpha^1 \alpha^a X
\ar[rrrr,"{
  \id[\alpha^a X] \of^{\bullet} (\epsilon_X^{-1} \of (\mu_{a^{-1},a})_X)
}",rounded corners,to path={
  (\tikztostart.north)
  -- ([yshift=1.2em]\tikztostart.north)
  -- ([yshift=1.2em]\tikztotarget.north) \tikztonodes
  -- (\tikztotarget.north)
}]
\ar[rrrr,swap,"{
  (\mu_{1,a})_X \ =\ \epsilon_{\alpha^a X}^{-1} \ =\ \id[\alpha^a X]^{\bullet}
}",rounded corners,to path={
  (\tikztostart.south)
  -- ([yshift=-1.2em]\tikztostart.south)
  -- ([yshift=-1.2em]\tikztotarget.south) \tikztonodes
  -- (\tikztotarget.south)
}]
\ar[r,"(\mu_{a,a^{-1}})_{\alpha^a X}^{-1}"]
&[3em]
\alpha^a \alpha^{a^{-1}} \alpha^a X
\ar[r,"\alpha^a (\mu_{a^{-1},a})_X"]
&[3em]
\alpha^a \alpha^1 X
\ar[r,shift left,"\alpha^a \epsilon_X^{-1}"]
\ar[r,shift right,swap,"\alpha^a (\mu_{a,1})_X"]
&[2em]
\alpha^a X
\ar[r,swap,"\id"]
&
\alpha^a X
\,.
\end{tikzcd}
\end{equation*}
\end{proof}

\begin{lemma}
Let \((F,s) \colon (\cat{C},\alpha) \to (\cat{D},\beta)\) be an \(H\)-module functor.
Then there is an \(H\)-Hom-graded functor \((F,s)^{\bullet} \colon (\cat{C},\alpha)^{\bullet} \to (\cat{D},\beta)^{\bullet}\) mapping each object \(X \mapsto FX\) and each morphism \(f \in \Hom_{(\cat{C},\alpha)^{\bullet}}^h(X,Y) = \Hom_{\cat{C}}(\alpha^h X,Y)\) to
\begin{equation*}
(F,s)^{\bullet} f
= Ff \of s^h_X
\in \Hom_{\cat{D}}(\beta^h FX,FY)
\in \Hom_{(\cat{D},\beta)^{\bullet}}^h(FX,FY)
\,.
\end{equation*}
\label{lem:2bullet-1}
\end{lemma}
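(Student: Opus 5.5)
We must check that \((F,s)^{\bullet}\) is well defined, \(R\)-linear, preserves degrees, preserves identities and preserves the composition \(\of^{\bullet}\). Well-definedness and degree preservation are immediate. Since \(s^h_X \colon \beta^h FX \to F\alpha^h X\) and \(f \colon \alpha^h X \to Y\), the composite \(Ff \of s^h_X\) lies in \(\Hom_{\cat{D}}(\beta^h FX, FY) = \Hom^h_{(\cat{D},\beta)^{\bullet}}(FX,FY)\). \(R\)-linearity follows from \(R\)-linearity of \(F\), since precomposition with \(s^h_X\) is linear. So all the content is in the two functoriality axioms, and both will come from the two coherence diagrams in the definition of an \(H\)-module functor.

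For identities, we need \(F(\epsilon^{\alpha}_X)^{-1} \of s^1_X = (\epsilon^{\beta}_{FX})^{-1}\). The triangle for \((F,s)\) gives \(s^1_X \of \epsilon^{\beta}_{FX} = F\epsilon^{\alpha}_X\). Rearranging this, using that both \(\epsilon\)'s are isomorphisms, gives exactly the required equation.

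For composition, take \(f \in \Hom_{\cat{C}}(\alpha^h X, Y)\) and \(f' \in \Hom_{\cat{C}}(\alpha^{h'} Y, Z)\). The image of \(f' \of^{\bullet} f\) is
\begin{equation*}
Ff' \of F\alpha^{h'} f \of F(\mu^{\alpha}_{h',h})_X^{-1} \of s^{h'h}_X ,
\end{equation*}
while the composite of images is
\begin{equation*}
Ff' \of s^{h'}_Y \of \beta^{h'} Ff \of \beta^{h'} s^h_X \of (\mu^{\beta}_{h',h})_{FX}^{-1} .
\end{equation*}
The plan is to transform the second expression into the first in two moves. First, naturality of \(s^{h'}\) at the morphism \(f\) gives \(s^{h'}_Y \of \beta^{h'} Ff = F\alpha^{h'} f \of s^{h'}_{\alpha^h X}\). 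This reduces the problem to showing
\begin{equation*}
s^{h'}_{\alpha^h X} \of \beta^{h'} s^h_X \of (\mu^{\beta}_{h',h})_{FX}^{-1} = F(\mu^{\alpha}_{h',h})_X^{-1} \of s^{h'h}_X .
\end{equation*}
Second, this identity is precisely the hexagon (square) coherence for \((F,s)\) evaluated at \(X\), after inverting the two \(\mu\)'s. That square reads \(F(\mu^{\alpha}_{h',h})_X \of s^{h'}_{\alpha^h X} \of \beta^{h'} s^h_X = s^{h'h}_X \of (\mu^{\beta}_{h',h})_{FX}\).

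The main obstacle is purely bookkeeping. One has to make sure the whiskerings \(\id \hof s^b\) and \(s^a \hof \id\) in the coherence diagram are read as the components \(\beta^{h'} s^h_X\) and \(s^{h'}_{\alpha^h X}\) respectively, in that order. One also has to apply naturality of \(s^{h'}\) to the degree-\(h\) morphism \(f\), viewed as an ordinary morphism \(\alpha^h X \to Y\) of \(\cat{C}\), which is legitimate. Once these identifications are fixed, the argument is a single commutative diagram combining the naturality square with the coherence square.
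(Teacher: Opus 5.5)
Your proof is correct and follows the same approach as the paper. The identity axiom comes from the unit triangle \(s^1_X \circ \epsilon^{\beta}_{FX} = F\epsilon^{\alpha}_X\), and the composition axiom comes from combining naturality of \(s^{h'}\) at \(f\) with the coherence square for \(s\); these are exactly what the paper's two commutative diagrams encode.
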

\begin{proof}
By definition, \((F,s)^{\bullet}\) preserves morphism degrees.
Functoriality is by the commutative diagrams
\begin{equation*}
\qquad\qquad\qquad 
\begin{tikzcd}[column sep=5em]
\beta^1 FX
\ar[r,"{(F,s)^{\bullet} \id[X]^{\bullet}}"]
\ar[d,swap,"s^1_X"]
& FX
\\
F\alpha^1 X
\ar[r,swap,"(s^1_X)^{-1}"]
\ar[ur,"{F\id[X]^{\bullet} = F(\epsilon^{\alpha}_X)^{-1}}" description]
&
\beta^1 FX
\ar[u,swap,"{(\epsilon^{\beta}_{FX})^{-1} = \id[FX]^{\bullet}}"]
\end{tikzcd}
\end{equation*}
\begin{equation*}
\begin{tikzcd}
\beta^{h'h} FX
\ar[rr,"{(F,s)^{\bullet} (f' \of^{\bullet} f)}"]
\ar[dr,"s^{h'h}_X"]
\ar[d,swap,"(\mu^{\beta}_{h',h})_{FX}^{-1}"]
&& FZ
\\
\beta^{h'} \beta^h FX
\ar[d,"\beta^{h'} s^h_X"]
\ar[ddr, swap, "{\beta^{h'} (F,s)^{\bullet} f}", rounded corners=.8em, to path={
  (\tikztostart.west)
  -- ([xshift=-.5em]\tikztostart.west)
  -- ([xshift=-.5em,yshift=-7.3em]\tikztostart.west) \tikztonodes
  -- (\tikztotarget.west)
}]
& F \alpha^{h'h} X
\ar[d,"F(\mu^{\alpha}_{h',h})_X^{-1}"]
\ar[ur,"F(f' \of^{\bullet} f)"]
\\
\beta^{h'} F \alpha^h X
\ar[r,"s^{h'}_{\alpha^h X}"]
\ar[dr,swap,"\beta^{h'} Ff"]
& F \alpha^{h'} \alpha^h X
\ar[r,"F\alpha^{h'} f"]
& F \alpha^{h'} Y
\ar[uu,"Ff"]
\\
& \beta^{h'} FY
\ar[ur,swap,"s^{h'}_Y"]
\ar[uuur, swap, "{(F,s)^{\bullet} f'}", rounded corners=.8em, to path={
  (\tikztostart.east)
  -- ([xshift=1em,yshift=-10.9em]\tikztotarget.east)
  -- ([xshift=1em]\tikztotarget.east) \tikztonodes
  -- (\tikztotarget.east)
}]
\end{tikzcd}
\end{equation*}
since the bottom path of the latter is precisely the definition of \((F,s)^{\bullet} f' \of^{\bullet} (F,s)^{\bullet} f\).
\end{proof}

\begin{lemma}
Let \((E,r), (F,s) \colon (\cat{C},\alpha) \to (\cat{D},\beta)\) be \(H\)-module functors with an \(H\)-module natural transformation \(\eta \colon (E,r) \tonat (F,s)\).
Then there is an \(H\)-Hom-graded natural transformation \(\eta^{\bullet} \colon (E,r)^{\bullet} \to (F,s)^{\bullet}\) with components given by
\begin{equation*}
\eta^{\bullet}_X
= \eta_X \of (\epsilon^{\beta}_{EX})^{-1}
\in \Hom_{\cat{D}}(\beta^1 EX, FX)
\in \Hom_{(\cat{D},\beta)^{\bullet}}^1(EX, FX)
\,.
\end{equation*}
\label{lem:2bullet-2}
\end{lemma}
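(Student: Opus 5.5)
We have to check two things: each component \(\eta^{\bullet}_X\) is a well-defined degree 1 morphism of \((\cat{D},\beta)^{\bullet}\), and the family is natural with respect to morphisms of every degree \(h\). The first is immediate. Since \(\epsilon^{\beta}_{EX} \colon EX \to \beta^1 EX\), the composite \(\eta_X \of (\epsilon^{\beta}_{EX})^{-1}\) lies in \(\Hom_{\cat{D}}(\beta^1 EX, FX) = \Hom^1_{(\cat{D},\beta)^{\bullet}}(EX,FX)\). So all the work is in naturality.

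Fix \(f \in \Hom^h_{(\cat{C},\alpha)^{\bullet}}(X,Y) = \Hom_{\cat{C}}(\alpha^h X, Y)\). We must show
\begin{equation*}
\eta^{\bullet}_Y \of^{\bullet} (E,r)^{\bullet} f = (F,s)^{\bullet} f \of^{\bullet} \eta^{\bullet}_X
\end{equation*}
as morphisms \(\beta^h EX \to FY\) in \(\cat{D}\). I would unfold both sides using \cref{def:2bullet-0} and \cref{lem:2bullet-1}.

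The left side is
\begin{equation*}
\eta_Y \of (\epsilon^{\beta}_{EY})^{-1} \of \beta^1(Ef \of r^h_X) \of (\mu^{\beta}_{1,h})^{-1}_{EX}.
\end{equation*}
The right side is
\begin{equation*}
Ff \of s^h_X \of \beta^h\big(\eta_X \of (\epsilon^{\beta}_{EX})^{-1}\big) \of (\mu^{\beta}_{h,1})^{-1}_{EX}.
\end{equation*}

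I would simplify each side using the unit coherences of the action \(\beta\), namely \((\mu^{\beta}_{1,h})_Z = (\epsilon^{\beta}_{\beta^h Z})^{-1}\) and \((\mu^{\beta}_{h,1})_Z = \beta^h(\epsilon^{\beta}_Z)^{-1}\) from the triangle diagram.

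On the right, this gives \(\beta^h (\epsilon^{\beta}_{EX})^{-1} \of (\mu^{\beta}_{h,1})^{-1}_{EX} = \id\). The right side therefore reduces to \(Ff \of s^h_X \of \beta^h \eta_X\).

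On the left, the unit coherence rewrites \((\mu^{\beta}_{1,h})^{-1}_{EX}\) as \(\epsilon^{\beta}_{\beta^h EX}\). Naturality of \(\epsilon^{\beta}\colon \id \tonat \beta^1\) then lets us move \((\epsilon^{\beta}_{EY})^{-1} \of \beta^1(Ef \of r^h_X)\) past it, giving \(Ef \of r^h_X \of (\epsilon^{\beta}_{\beta^h EX})^{-1}\). The \(\epsilon\)'s cancel, and the left side reduces to \(\eta_Y \of Ef \of r^h_X\).

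It remains to show \(\eta_Y \of Ef \of r^h_X = Ff \of s^h_X \of \beta^h\eta_X\). First, naturality of \(\eta\) applied to \(f \colon \alpha^h X \to Y\) gives \(\eta_Y \of Ef = Ff \of \eta_{\alpha^h X}\). Second, the \(H\)-module naturality square for \(\eta\) gives \(\eta_{\alpha^h X} \of r^h_X = s^h_X \of \beta^h \eta_X\). Combining the two yields the required equality.

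The only step needing care is the bookkeeping of the unit coherences of \(\beta\): choosing the correct identification of \(\mu_{1,h}\) and \(\mu_{h,1}\) with \(\epsilon\), and respecting the direction conventions of \(\epsilon\) in \cref{def:2bullet-0}. I expect this to be the main (mild) obstacle, and I would draw the naturality square as a pasted commutative diagram in the paper's style to make it transparent.
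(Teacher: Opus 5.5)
Your proposal is correct and follows essentially the same route as the paper's proof. You expand both composites via \(\of^{\bullet}\) and cancel the \(\mu^{\beta}_{1,h}\) and \(\mu^{\beta}_{h,1}\) terms against \(\epsilon^{\beta}\) using the unit coherences and naturality of \(\epsilon^{\beta}\). You then finish with naturality of \(\eta\) at \(f\) and the \(H\)-module square \(\eta_{\alpha^h X} \of r^h_X = s^h_X \of \beta^h \eta_X\), which is exactly what the paper's commutative diagram encodes.
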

\begin{proof}
Let \(f \in \Hom_{(\cat{C},\alpha)^{\bullet}}^h(X,Y) = \Hom_{\cat{C}}(\alpha^h X, Y)\).
Expanding the composition rule gives
\begin{gather*}
\eta^{\bullet}_Y \of^{\bullet} (E,r)^{\bullet} f
= (\eta_Y \of (\epsilon^{\beta}_{EY})^{-1}) \of^{\bullet} (Ef \of r^h_X)
= \eta_Y \of (\epsilon^{\beta}_{EY})^{-1} \of \beta^1 Ef \of \beta^1 r^h_X \of (\mu^{\beta}_{1,h})_{EX}^{-1}
\\
(F,s)^{\bullet} f \of^{\bullet} \eta^{\bullet}_X
= (Ff \of s^h_X) \of^{\bullet} (\eta_X \of (\epsilon^{\beta}_{EX})^{-1})
= Ff \of s^h_X \of \beta^h \eta_X \of \beta^h (\epsilon^{\beta}_{EX})^{-1}
\of (\mu_{h,1}^{\beta})_{EX}^{-1}
\,,
\end{gather*}
and these two lines are equal by the commuting diagram
\begin{equation*}
\begin{tikzcd}[column sep=4em]
&[-3em] \beta^1 \beta^h EX
\ar[r,"\beta^1 r^h_X"]
\ar[rdd,"(\epsilon^{\beta}_{\beta^h EX})^{-1}" description]
& \beta^1 E\alpha^h X
\ar[r,"\beta^1 Ef"]
\ar[rd,"(\epsilon^{\beta}_{E\alpha^h X})^{-1}" description]
& \beta^1 EY
\ar[r,"(\epsilon^{\beta}_{EY})^{-1}"]
& EY
\ar[rd,"\eta_Y"]
&[-3em]
\\
\beta^h EX
\ar[ru,"(\mu^{\beta}_{1,h})_{EX}^{-1}"]
\ar[rd,swap,"(\mu^{\beta}_{h,1})_{EX}^{-1}"]
\ar[rrd,"\id"]
&&& E\alpha^h X
\ar[ru,"Ef" description]
\ar[rd,"\eta_{\alpha^h X}"]
&& FY
\\
& \beta^h \beta^1 EX
\ar[r,swap,"\beta^h(\epsilon^{\beta}_{EX})^{-1}"]
& \beta^h EX
\ar[r,swap,"\beta^h \eta_X"]
\ar[ru,"r^g_X"]
& \beta^h FX
\ar[r,swap,"s^g_X"]
& F\alpha^h X
\ar[ru,swap,"Ff"]
\,.
\end{tikzcd}
\end{equation*}
\end{proof}

\begin{proposition}
There is a 2-functor \((-)^{\bullet} \colon \ModCat[H] \to \Cat^{H,\shifts}\) defined by \cref{def:2bullet-0,lem:2bullet-1,lem:2bullet-2}.
\end{proposition}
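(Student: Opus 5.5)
The plan is to check the 2-functor axioms one by one, relying on \cref{def:2bullet-0,lem:2bullet-shifts,lem:2bullet-1,lem:2bullet-2} for well-definedness. \Cref{def:2bullet-0} and the subsequent identity and associativity diagrams show that \((\cat{C},\alpha)^{\bullet}\) is an \(H\)-Hom-graded category. \Cref{lem:2bullet-shifts} shows that it has shifts, so it is an object of \(\Cat^{H,\shifts}\). \Cref{lem:2bullet-1,lem:2bullet-2} supply the 1- and 2-cells. What remains is strict preservation of identities and composition for 1-morphisms, and of vertical and horizontal composition and identities for 2-morphisms.

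For 1-morphisms, the identity \(H\)-module functor is \((\id,\id)\), which sends \(f\) to \(f \of \id = f\); hence \((\id,\id)^{\bullet} = \id\). For a composite \((E,r)\of(F,s) = (EF,rs)\) and \(f \in \Hom_{\cat{C}}(\alpha^h X,Y)\), we have
\begin{equation*}
(EF,rs)^{\bullet} f = EFf \of Es^h_X \of r^h_{FX} = E(Ff \of s^h_X) \of r^h_{FX} = (E,r)^{\bullet}\big((F,s)^{\bullet} f\big) \,,
\end{equation*}
so composition is preserved strictly. Both functors agree on objects.

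For 2-morphisms, first consider the identity transformation on \((F,s)\). It maps to \(\id[FX] \of (\epsilon^{\beta}_{FX})^{-1} = \id[FX]^{\bullet}\), which is the identity in \((\cat{D},\beta)^{\bullet}\). For vertical composition, take \(\theta \of \eta\) with \(\eta \colon E \tonat F\) and \(\theta \colon F \tonat K\). We must show \(\theta^{\bullet}_X \of^{\bullet} \eta^{\bullet}_X = \theta_X\eta_X(\epsilon^{\beta}_{EX})^{-1}\). Expanding \(\of^{\bullet}\) gives
\begin{equation*}
\theta_X \of (\epsilon^{\beta}_{FX})^{-1} \of \beta^1\eta_X \of \beta^1(\epsilon^{\beta}_{EX})^{-1} \of (\mu^{\beta}_{1,1})^{-1}_{EX} \,.
\end{equation*}
Naturality of \(\epsilon^{\beta}\) moves \(\eta_X\) past \(\epsilon^{-1}\). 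The unit coherence \(\mu_{1,1} = \epsilon\hof\id\) (from the triangle with \(h=1\)) together with \(\epsilon_{\alpha^1 Y} = \alpha^1\epsilon_Y\) then collapses the remaining factors to a single \((\epsilon^{\beta}_{EX})^{-1}\). The equality \(\epsilon_{\alpha^1} = \alpha^1\epsilon\) is the standard consequence of naturality of \(\epsilon\) with \(\epsilon\) invertible.

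Horizontal composition is where I expect the real work, since \(\hof\) in \(\Cat^{H,\shifts}\) is \((\theta\hof\eta)_X = \theta_{F'X}\of^{\bullet} E^{\bullet}\eta^{\bullet}_X\). Here \(\eta \colon E\tonat E'\) lives on \(\cat{C}\to\cat{D}\) and \(\theta \colon F\tonat F'\) on \(\cat{D}\to\cat{E}\). I would reduce this to whiskerings, using the interchange law and the already-established preservation of vertical composition, so it suffices to treat \(\theta\hof E\) and \(F\hof\eta\). The left whiskering is immediate: \((\theta\hof E)^{\bullet}_X = \theta_{EX}\epsilon^{-1}\). For the right whiskering we must compute
\begin{equation*}
(F,s)^{\bullet}(\eta^{\bullet}_X) = F\eta_X \of F(\epsilon^{\beta}_{EX})^{-1}\of s^1_{EX} \,.
\end{equation*}
The unit axiom for \((F,s)\) gives \(s^1 = (\id\hof\epsilon^{\beta})\of(\epsilon^{\gamma}\hof\id)^{-1}\). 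Substituting, \(F(\epsilon^{\beta})^{-1} \of s^1_{EX} = (\epsilon^{\gamma}_{FEX})^{-1}\), so the expression reduces to \(F\eta_X \of (\epsilon^{\gamma}_{FEX})^{-1} = (F\eta)^{\bullet}_X\), as required. With identities, both compositions, and well-definedness all checked, \((-)^{\bullet}\) is a 2-functor.
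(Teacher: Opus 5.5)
Your proof is correct. The parts on 1-morphisms, identity 2-morphisms and vertical composition are the same computations the paper does. The paper draws the vertical case as a diagram built from naturality of \(\epsilon^{\beta}\) and the unit coherence at \(h=1\), exactly as you argue it.

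The genuine difference is in horizontal composition. The paper verifies \((\nu'\hof\nu)^{\bullet}_X = \nu'^{\bullet}_{FX} \of^{\bullet} E'^{\bullet}\nu^{\bullet}_X\) directly in one diagram. That diagram uses three facts at once: the unit coherence of \(\mu^{\gamma}\) at \(h=1\), the unit axiom of the outer module functor, and naturality of \(\epsilon^{\gamma}\). You instead factor \(\theta\hof\eta = (\theta\hof E')\of(F\hof\eta)\) by interchange in both 2-categories. You then invoke the already-established preservation of vertical composition and of 1-composition, so only the two whiskerings remain. One whiskering is a tautology, and the other reduces to the unit axiom \(s^1 \of (\epsilon^{\gamma}\hof F) = F\hof\epsilon^{\beta}\). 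Your route is more modular and pinpoints the single axiom doing the work. The paper's route is self-contained and does not lean on the earlier steps or on interchange.

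There are two small slips to fix. First, the \(h=1\) unit triangle gives \(\mu_{1,1}^{-1} = \epsilon\hof\id\); as written, \(\mu_{1,1} = \epsilon\hof\id\) does not typecheck. The same diagram also gives \(\mu_{1,1}^{-1} = \id\hof\epsilon\), so the equality \(\epsilon_{\alpha^1 Y} = \alpha^1\epsilon_Y\) comes for free and your naturality argument is not needed. Second, with your labels (\(\eta\colon E\tonat E'\), \(\theta\colon F\tonat F'\)) the horizontal composite in the target is \(\theta^{\bullet}_{E'X}\of^{\bullet}F^{\bullet}\eta^{\bullet}_X\), not \(\theta_{F'X}\of^{\bullet}E^{\bullet}\eta^{\bullet}_X\). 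The corrected form is the one consistent with the factorisation you actually use.
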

\begin{proof}
This is well-defined by \cref{lem:2bullet-shifts}.
Let \(f \in \Hom_{(\cat{C},\alpha)^{\bullet}}^h(X,Y)\).
Then \((-)^{\bullet}\) preserves identity 1-morphisms by
\begin{equation*}
\id[(\cat{C},\alpha)]^{\bullet} X
= \id[\cat{C}] X
= X
\qquad
\id[(\cat{C},\alpha)]^{\bullet} f
= \id[\cat{C}] f \of \id[\alpha^h X]
= f
\end{equation*}
and the composition of 1-morphisms by
\begin{gather*}
((E,r) \of (F,s))^{\bullet} X
= EFX
= E^{\bullet} F^{\bullet} X
\\
\begin{split}
((E,r) \of (F,s))^{\bullet} f
&= (EF, E(s^h_X \of r^h_{FX}))^{\bullet} f \\
&= EFf \of E s^h_X \of r^h_{FX} \\
&= E^{\bullet} (Ff \of s^h_X) \\
&= E^{\bullet} F^{\bullet} f \,.
\end{split}
\end{gather*}
Identity 2-morphisms are preserved by
\begin{equation*}
(\id[(F,s)])^{\bullet}_X
= \id[FX] \of (\epsilon^{\beta}_{FX})^{-1}
= \id[FX]^{\bullet}
= (\id[(F,s)^{\bullet}]^{\bullet})_X
\end{equation*}
for \((F,s) \colon (\cat{C},\alpha) \to (\cat{D},\beta)\).
Given a diagram in \(\ModCat[H]\) of the form
\begin{equation*}
\begin{tikzcd}[column sep=4em]
(\cat{A},\alpha)
\ar[r,bend left,"{(E,r)}"{name=e}]
\ar[r,"{(F,s)}" {description, name=f}]
\ar[r,bend right,swap,"{(H,t)}" {name=h}]
\nat[shorten <=2pt, shorten >=1pt, from=e, to=f, "\,\nu"]
\nat[shorten <=1pt, shorten >=2pt, from=f, to=h, "\,\eta"]
& (\cat{B},\beta)
\ar[r,bend left,"{(E',r')}"{name=E}]
\ar[r,bend right,swap,"{(F',s')}"{name=F}]
\nat[shorten <=3pt, shorten >=3pt, from=E, to=F, "\,\nu'"]
& (\cat{C},\gamma)
\,,
\end{tikzcd}
\end{equation*}
we have preservation of vertical and horizontal composition by the commutative diagrams
\begin{equation*}
\begin{tikzcd}[row sep=1em]
\beta^1 EX
\ar[rrrr,"(\eta \of \nu)^{\bullet}_X"]
\ar[dr,"\id"]
\ar[ddd,swap,"(\mu^{\beta}_{1,1})_{EX}^{-1}"]
&[1em]&&[-2em]&
HX
\\[-.5em]
& \beta^1 EX
\ar[r,"(\epsilon^{\beta}_{EX})^{-1}"]
\ar[drrr,"\beta^1 \nu_X"]
& EX
\ar[r,"\nu_X"]
& FX
\ar[ur,"\eta_X"]
\\
&& \beta^1 \beta^1 FX
\ar[rr,swap,near start,"\beta^1 (\epsilon^{\beta}_{FX})_{-1}"]
&& \beta^1 FX
\ar[ul,swap,near end,"(\epsilon^{\beta}_{FX})^{-1}"]
\ar[uu,swap,"\eta^{\bullet}_X"]
\\
\beta^1 \beta^1 EX
\ar[uur,near end,"\beta^1 (\epsilon^{\beta}_{EX})^{-1}"]
\ar[urr,near end,"\beta^1 \beta^1 \nu_X"]
\ar[rrr,near end,"\beta^1 (\epsilon^{\beta}_{EX})^{-1}"]
\ar[urrrr,near end,swap,"\beta^1 \nu_X",rounded corners,to path={
  ([xshift=1em]\tikztostart.south)
  -- ([yshift=-.5em,xshift=2em]\tikztostart.south)
  -- ([yshift=-3.2em]\tikztotarget.south)
  -| (\tikztotarget.south) \tikztonodes
}]
&&& \beta^1 EX
\ar[ur,swap,near start,"\beta^1 \nu_X"]
\end{tikzcd}
\end{equation*}
\begin{equation*}
\begin{tikzcd}[column sep=4em]
\gamma^1 E'EX
\ar[rrr,"(\nu'^{\bullet} \hof \nu^{\bullet})_X \ =\ \nu'^{\bullet}_{FX} \of^{\bullet} E'^{\bullet} \nu^{\bullet}_X"]
\ar[dd,swap,"\id"]
&&& F'FX
\\[-1.5em]
& \gamma^1 \gamma^1 E'EX
\ar[dr,"{\gamma^1 {}E'^{\bullet} \nu^{\bullet}_X}"]
\ar[d,"\gamma^1 r'^1_{EX}"]
\\
\gamma^1 E'EX
\ar[ur,near end,"(\mu^{\gamma}_{1,1})_{E'EX}^{-1}"]
\ar[r,swap,"\gamma^1 E' \epsilon^{\beta}_{EX}"]
\ar[rr,swap,"\gamma^1 E' \nu_X",rounded corners,to path={
  ([xshift=1em]\tikztostart.south)
  -- ([yshift=-1.2em,xshift=2em]\tikztostart.south)
  -- ([yshift=-1.2em,xshift=-2em]\tikztotarget.south) \tikztonodes
  -- ([xshift=-1em]\tikztotarget.south)
}]
\ar[dd,swap,"(\epsilon^{\gamma}_{E'EX})^{-1}"]
& \gamma^1 E' \beta^1 EX
\ar[r,swap,"\gamma^1 E' \nu^{\bullet}_X"]
& \gamma^1 E'FX
\ar[r,"\nu'^{\bullet}_{FX}"]
\ar[d,"(\epsilon^{\gamma}_{E'FX})^{-1}"]
& F'FX
\ar[uu,swap,"\id"]
\\
&& E'FX
\ar[ur,bend right=1em,swap,"\nu'_{FX}"]
\\[-2.1em]
E'EX
\ar[urr,shift left=.1em,near start,"E' \nu_X"]
\ar[uurrr,swap,near end,"(\nu' \hof \nu)_X",rounded corners,to path={
  (\tikztostart.east)
  -- ([yshift=-4em]\tikztotarget.south)
  -| (\tikztotarget.south) \tikztonodes
}]
\,.
\end{tikzcd}
\end{equation*}
\end{proof}
\subsection{2-equivalence between \texorpdfstring{\(\Cat^{H,\shifts}\)}{CatH<>} and \texorpdfstring{\(\ModCat[H]\)}{H-ModCat}}
\label{sec:2eqv1}
It remains to show that the 2-functors \((-)^1\) and \((-)^{\bullet}\) form a 2-equivalence.
That is, we must find 2-natural isomorphisms
\begin{equation*}
\two{\nu} \colon ((-)^{\bullet})^1 \to \iid[\ModCat[H]]
\qquad
\two{\eta} \colon ((-)^1)^{\bullet} \to \iid[\Cat^{H,\shifts}]
\,.
\end{equation*}

\begin{lemma}
Let \((\cat{C},\alpha)\) be an \(H\)-module category.
Then the \(H\)-action on \(((\cat{C},\alpha)^{\bullet})^1\) is given by \(\phi \colon \Disc(H) \to \Aut(\cat(C)^1)\) with
\begin{align*}
\phi^h X &= \alpha^h X &
\epsilon^{\phi}_X &= r^{\alpha}_{X,1} = \id[\alpha^1 X] \\
\phi^h f &= \alpha^h f \of \alpha^h \epsilon^{\alpha}_X \of (\epsilon^{\alpha}_{\alpha^h X})^{-1} &
(\mu^{\phi}_{a,b})_X &= (\mu^{\alpha}_{a,b})_X \of (\epsilon^{\alpha}_{\phi^a \phi^b X})^{-1}
\end{align*}
for morphisms \(f \in \Hom_{((\cat{C},\alpha)^{\bullet})^1}(X,Y) = \Hom_{\cat{C}}(\alpha^1 X, Y)\).
\end{lemma}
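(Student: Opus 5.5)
The plan is to unwind definitions, composing the shift functor construction from the earlier lemma (the one building \(\phi\) from fixed shifts) with the shifts of \((\cat{C},\alpha)^{\bullet}\) from \cref{lem:2bullet-shifts}. By that lemma, \(r^{\alpha}_{X,a}=\id[\alpha^a X]\) is a degree \(a\) morphism \(X\to\alpha^aX\), so \(\phi^hX=\alpha^hX\) and \(\epsilon^{\phi}_X=r^{\alpha}_{X,1}=\id[\alpha^1X]\) hold by \cref{prop:shiftaction}. The only work is to compute \(\phi^hf\) and \(\mu^{\phi}\) in terms of \(\cat{C}\), by expanding the bullet composition \(\of^{\bullet}\) of \cref{def:2bullet-0}.

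For morphisms, we have \(\phi^hf=r^{\alpha}_{Y,h}\of^{\bullet}f\of^{\bullet}(r^{\alpha}_{X,h})^{-1}\), with \((r^{\alpha}_{X,h})^{-1}=\epsilon_X^{-1}\of(\mu_{h^{-1},h})_X\). First I compute \(f\of^{\bullet}(r^{\alpha}_{X,h})^{-1}\), a degree \(h^{-1}\) morphism \(\alpha^h X\to Y\) in the bullet category, i.e. a map \(\alpha^{h^{-1}}\alpha^hX\to Y\) in \(\cat{C}\). It equals \(f\of\alpha^1\epsilon_X^{-1}\of\alpha^1(\mu_{h^{-1},h})_X\of(\mu_{1,h^{-1}})^{-1}_{\alpha^hX}\). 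Naturality of \(\mu_{1,-}\) and the unit axiom \(\mu_{1,k}=\epsilon^{-1}\hof\id\) then reduce this to \(f\of\alpha^1\epsilon^{-1}_X\) precomposed with \(\epsilon\)'s and \(\mu\)'s. Next I postcompose with \(\id[\alpha^hY]\) using \(\of^{\bullet}\). This applies \(\alpha^h\) and precomposes with \((\mu_{h,h^{-1}})^{-1}_{\alpha^hX}\).

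The resulting word in \(\mu\)'s, then \(\alpha^h\alpha^{h^{-1}}\alpha^hX\to\alpha^h\alpha^1X\), is simplified with the associativity pentagon-type square. It compares \(\alpha^h(\mu_{h^{-1},h})\of(\mu_{h,h^{-1}})^{-1}_{\alpha^h}\) with \((\mu_{h,1})^{-1}\of\mu_{1,h}\)-type composites. Using the unit triangles \(\mu_{h,1}=\id\hof\epsilon^{-1}\) and \(\mu_{1,h}=\epsilon^{-1}\hof\id\), everything collapses to \(\alpha^hf\of\alpha^h\epsilon_X\of(\epsilon_{\alpha^hX})^{-1}\). Here the \(\epsilon_X\) appears because a degree 1 morphism in \((\cat{C},\alpha)^{\bullet}\) is a map out of \(\alpha^1X\), and the trailing \(\epsilon_{\alpha^hX}^{-1}\) is the bullet identity encoding.

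The main obstacle is this coherence bookkeeping, since one must keep track of which \(\alpha^1\) is which. I would organise it as a single commutative diagram, as done earlier for associativity of \(\of^{\bullet}\). The same method gives \(\mu^{\phi}\). Its defining composite is \(r^{\alpha}_{X,ab}\of^{\bullet}(r^{\alpha}_{X,b})^{-1}\of^{\bullet}(r^{\alpha}_{\phi^bX,a})^{-1}\) (reading the zigzag \(\phi^a\phi^bX\to\phi^bX\to X\to\phi^{ab}X\)). Expanding it yields a map \(\alpha^1\alpha^a\alpha^bX\to\alpha^{ab}X\). Using the unit laws and the associativity square to cancel the \(\mu_{a^{-1},a}\), \(\mu_{b^{-1},b}\) factors, this becomes \((\mu^{\alpha}_{a,b})_X\of(\epsilon^{\alpha}_{\alpha^a\alpha^bX})^{-1}\), as claimed.
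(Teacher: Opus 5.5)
Your proposal is correct and takes essentially the same route as the paper. You expand $\phi^h f = r^{\alpha}_{Y,h} \of^{\bullet} f \of^{\bullet} (r^{\alpha}_{X,h})^{-1}$ and $(\mu^{\phi}_{a,b})_X = r^{\alpha}_{X,ab} \of^{\bullet} (r^{\alpha}_{X,b})^{-1} \of^{\bullet} (r^{\alpha}_{\alpha^b X,a})^{-1}$ via the bullet composition, then collapse them using naturality, the unit triangles and the associativity square (your comparison of $\alpha^h(\mu_{h^{-1},h})_X \of (\mu_{h,h^{-1}})^{-1}_{\alpha^h X}$ with $(\mu_{h,1})_X^{-1} \of (\mu_{1,h})_X$ is exactly the key step), which the paper organises as commutative diagrams.
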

\begin{proof}
Applying the 2-functors gives
\begin{align*}
\phi^h X &= \alpha^h X &
\epsilon^{\phi}_X &= r^{\alpha}_{X,1} = \id[\alpha^1 X] \\
\phi^h f
&= r^{\alpha}_{Y,h} \of^{\bullet} f \of^{\bullet} (r^{\alpha}_{X,h})^{-1} &
(\mu^{\phi}_{a,b})_X
&= r^{\alpha}_{X,ab} \of^{\bullet} (r^{\alpha}_{X,b})^{-1} \of^{\bullet} (r^{\alpha}_{\alpha^b X,a})^{-1} \,.
\end{align*}
The commuting diagram
\begin{equation*}
\begin{tikzcd}[column sep=4em]
\alpha^1 \alpha^h X
\ar[rr,
"r^{\alpha}_{Y,h} \of^{\bullet} (f \of^{\bullet} (r^{\alpha}_{X,h})^{-1})"
]
\ar[drr, near end, swap, "(\mu^{\alpha}_{h,h^{-1}})^{-1}_{\alpha^h X}"]
\ar[d, swap, "(\epsilon^{\alpha}_{\alpha^h X})^{-1} = (\mu^{\alpha}_{1,h})_X"]
&[-1.5em]&
\alpha^h Y
&[-2em]&[-5em]
\alpha^h Y
\ar[ll,swap,"r^{\alpha}_{Y,h} = \id"]
\\
\alpha^h X
\ar[d, swap, "\alpha^h \epsilon^{\alpha}_X = (\mu^{\alpha}_{h,1})^{-1}_X"]
&& \alpha^h \alpha^{h^{-1}} \alpha^h X
\ar[urr, near start, swap, "\alpha^h f \of^{\bullet} \alpha^h (r^{\alpha}_{X,h})^{-1}"]
\ar[dr, swap, "\alpha^h (\mu^{\alpha}_{1,h^{-1}})^{-1}_{\alpha^h X}"]
&& \alpha^h \alpha^1 X
\ar[u,swap,"\alpha^h f"]
\\[1em]
\alpha^h \alpha^1 X
\ar[urr, swap, near start, "\alpha^h (\mu^{\alpha}_{h^{-1},h})^{-1}_X"]
\ar[drr, swap, "\alpha^h (\mu_{1,1}^{\alpha})^{-1}_X"]
\ar[urrrr, swap, rounded corners, "\id", to path={
  (\tikztostart.south)
  -- ([yshift=-3.3em]\tikztostart.south) \tikztonodes
  -- ([yshift=-7.8em]\tikztotarget.south)
  -- (\tikztotarget.south)
}]
&&& \alpha^h \alpha^1 \alpha^{h^{-1}} \alpha^h X
\ar[ur, "\alpha^h \alpha^1 (r^{\alpha}_{X,h})^{-1}"]
\ar[dl, swap,"\alpha^h \alpha^1 (\mu^{\alpha}_{h^{-1},h})_X"]
\\[-.5em]
&& \alpha^h \alpha^1 \alpha^1 X
\ar[uurr, rounded corners, "\alpha^h \alpha^1 (\epsilon^{\alpha}_X)^{-1}", to path={
  (\tikztostart.east)
  -- ([xshift=10.3em]\tikztostart.east) \tikztonodes
  -- ([xshift=-.4em]\tikztotarget.south)
}]
\end{tikzcd}
\end{equation*}
gives \(\phi^h f = \alpha^h f \of \alpha^h \epsilon^{\alpha}_X \of (\epsilon^{\alpha}_{\alpha^h X})^{-1}\).
The commuting diagram
\begin{equation*}
\begin{tikzcd}
\alpha^{(ab)^{-1}} \alpha^a \alpha^b X
\ar[rrrr,
"(r^{\alpha}_{X,b})^{-1} \of^{\bullet} (r^{\alpha}_{\alpha^b X, a})^{-1}"
]
\ar[drr,"(\mu^{\alpha}_{b^{-1},a^{-1}})^{-1}_{\alpha^a \alpha^b X}"]
\ar[ddd,swap,"(\mu^{\alpha}_{(ab)^{-1},a})_{\alpha^b X}"]
&&[-6em]&[-1em]&[2em] X
\\
&& \alpha^{b^{-1}} \alpha^{a^{-1}} \alpha^a \alpha^b X
\ar[ddr,"\alpha^{b^{-1}} (r^{\alpha}_{\alpha^h X, a})^{-1}"]
\ar[dl,swap,near end,"\alpha^{b^{-1}} (\mu^{\alpha}_{a^{-1},a})_{\alpha^b X}"]
\\
& \alpha^{b^{-1}} \alpha^1 \alpha^b X
\ar[dl,swap,near start,"(\mu_{b^{-1},1})_{\alpha^b X}"]
\ar[drr,swap,near start,"\alpha^{b^{-1}} (\epsilon^{\alpha}_{\alpha^b X})^{-1}"]
\\
\alpha^{b^{-1}} \alpha^b X
\ar[rrr,swap,"\id"]
&&& \alpha^{b^{-1}} \alpha^b X
\ar[uuur,"(r^{\alpha}_{X,b})^{-1}"]
\ar[r,swap,"(\mu^{\alpha}_{b^{-1},b})_X"]
& \alpha^1 X
\ar[uuu,swap,"(\epsilon^{\alpha}_X)^{-1}"]
\end{tikzcd}
\end{equation*}
makes \(\alpha^{ab} ((r^{\alpha}_{X,b})^{-1} \of^{\bullet} (r^{\alpha}_{\alpha^b X,a})^{-1})\) match the downward right hand side of the further commuting diagram
\begin{equation*}
\begin{tikzcd}
\alpha^1 \alpha^a \alpha^b X
\ar[rr,"(\mu^{\alpha}_{ab,(ab)^{-1}})^{-1}_{\alpha^a \alpha^b X}"]
\ar[dr,"(\mu^{\alpha}_{1,a})_{\alpha^b X}"]
\ar[ddd,swap,"\alpha^1 (\mu^{\alpha}_{a,b})_X"]
&&[1em] \alpha^{ab} \alpha^{(ab)^{-1}} \alpha^a \alpha^b X
\ar[d,"\alpha^{ab} (\mu^{\alpha}_{(ab)^{-1},a})_{\alpha^b X}"]
\\
& \alpha^a \alpha^b X
\ar[d,"(\mu^{\alpha}_{a,b})_X"]
& \alpha^{ab} \alpha^{b^{-1}} \alpha^b X
\ar[d,"\alpha^{ab} (\mu^{\alpha}_{b^{-1},b})_X"]
\ar[l,swap,"(\mu^{\alpha}_{ab,b^{-1}})_{\alpha^b X}"]
\\
& \alpha^{ab} X
& \alpha^{ab} \alpha^1 X
\ar[l,swap,"(\mu^{\alpha}_{ab,1})_X"]
\\
\alpha^1 \alpha^{ab} X
\ar[ur,"(\mu^{\alpha}_{1,ab})_X"]
\ar[rr,swap,"(\epsilon^{\alpha}_{\alpha^{ab} X})^{-1}"]
&& \alpha^{ab} X
\ar[ul,"\id"]
\ar[u,swap,"\alpha^{ab} \epsilon^{\alpha}_X"]
\end{tikzcd}
\end{equation*}
to give
\begin{equation*}
\begin{split}
(\mu^{\phi}_{a,b})_X
&= r^{\alpha}_{X,ab} \of^{\bullet} (r^{\alpha}_{X,b})^{-1} \of^{\bullet} (r^{\alpha}_{\alpha^b X, a})^{-1} \\
&= r^{\alpha}_{X,ab}
\of \phi^{ab} ((r^{\alpha}_{X,b})^{-1} \of^{\bullet} (r^{\alpha}_{\phi^b X, a})^{-1})
\of (\mu^{\alpha}_{ab,(ab)^{-1}})^{-1}_{\phi^a \phi^b X} \\
&= \id[\phi^{ab} X] \of (\epsilon^{\alpha}_{\phi^{ab} X})^{-1}
\of \alpha^1 (\mu_{a,b}^{\alpha})_X \\
&= (\mu^{\alpha}_{a,b})_X \of (\epsilon^{\alpha}_{\phi^a \phi^b X})^{-1}
\,.\qedhere
\end{split}
\end{equation*}
\end{proof}

\begin{lemma}
For each \(H\)-module category \((\cat{C},\alpha)\), there is an \(H\)-module equivalence
\begin{equation*}
\two{\nu}_{(\cat{C},\alpha)} = (\two{\nu}_{(\cat{C},\alpha)}, \id) \colon ((\cat{C},\alpha)^{\bullet})^1 \to (\cat{C},\alpha)
\end{equation*}
mapping objects \(X \mapsto X\) and morphisms \(f \in \Hom_{((\cat{C},\alpha)^{\bullet})^1}(X,Y) = \Hom_{\cat{C}}(\alpha^1 X,Y)\) to \(f \of \epsilon^{\alpha}_X\).
\label{lem:2eqv-nu}
\end{lemma}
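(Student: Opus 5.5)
We need to show that $\two{\nu}_{(\cat{C},\alpha)}$, which is the identity on objects and sends $f$ to $f \of \epsilon^{\alpha}_X$, is an $R$-linear functor $((\cat{C},\alpha)^{\bullet})^1 \to \cat{C}$, that it is an equivalence, and that the identity 2-cells make it an $H$-module functor. The plan is to verify these three things in turn, using the preceding lemma's explicit description of the action $\phi$ on $((\cat{C},\alpha)^{\bullet})^1$.

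First, functoriality. Identities are preserved because $\id[X]^{\bullet} = \epsilon_X^{-1}$ is sent to $\epsilon_X^{-1} \of \epsilon_X = \id[X]$. For degree 1 morphisms $f \colon \alpha^1 X \to Y$ and $f' \colon \alpha^1 Y \to Z$ we have
\begin{equation*}
f' \of^{\bullet} f = f' \of \alpha^1 f \of (\mu_{1,1})_X^{-1} .
\end{equation*}
The unit coherence gives $(\mu_{1,1})_X^{-1} = \epsilon_{\alpha^1 X}$, and naturality of $\epsilon$ gives $\alpha^1 f \of \epsilon_{\alpha^1 X} = \epsilon_Y \of f$. Hence
\begin{equation*}
(f' \of^{\bullet} f) \of \epsilon_X = f' \of \epsilon_Y \of f \of \epsilon_X ,
\end{equation*}
which is exactly the composite of the images. $R$-linearity is clear. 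The functor is bijective on homsets, since precomposition with the isomorphism $\epsilon_X$ is bijective, and it is the identity on objects. So it is an isomorphism of $R$-linear categories, and in particular an equivalence.

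Second, the $H$-module structure. We take $s^h = \id$, which requires $\two{\nu}\,\phi^h = \alpha^h \two{\nu}$ strictly. Both sides agree on objects. On a morphism $f$, the preceding lemma gives
\begin{equation*}
\two{\nu}(\phi^h f) = \alpha^h f \of \alpha^h \epsilon_X \of \epsilon_{\alpha^h X}^{-1} \of \epsilon_{\alpha^h X} = \alpha^h(f \of \epsilon_X) = \alpha^h \two{\nu} f ,
\end{equation*}
so the identity transformations are well-defined and natural. The unit axiom asks that
\begin{equation*}
\two{\nu}(\epsilon^{\phi}_X) = \id[\alpha^1 X] \of \epsilon_X = \epsilon_X ,
\end{equation*}
which is exactly $\epsilon^{\alpha}_X$. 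The multiplication axiom, with identity $s$, asks that $\two{\nu}(\mu^{\phi}_{a,b}) = \mu^{\alpha}_{a,b}$, and indeed
\begin{equation*}
(\mu^{\alpha}_{a,b})_X \of \epsilon^{-1}_{\alpha^a\alpha^b X} \of \epsilon_{\alpha^a\alpha^b X} = (\mu^{\alpha}_{a,b})_X .
\end{equation*}

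The only step needing care is the composition check, specifically identifying $(\mu_{1,1})^{-1}$ with $\epsilon_{\alpha^1 X}$. This follows from the triangle diagram in the definition of an $H$-module category, $\mu_{1,h} \of (\epsilon \hof \id) = \id$, taken at $h=1$. Everything else is direct substitution into the formulas of the previous lemma.
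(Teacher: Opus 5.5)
Your proposal is correct and follows essentially the same route as the paper. Composition is checked via the unit identity $(\mu_{1,1})^{-1}_X=\epsilon_{\alpha^1 X}$ together with naturality of $\epsilon$, and the equivalence comes from the inverse given by precomposing with $\epsilon_X^{-1}$. The identity $H$-module structure is verified by substituting the previous lemma's formulas for $\phi^h f$, $\epsilon^{\phi}$ and $\mu^{\phi}$.
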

\begin{proof}
A strict inverse can be given mapping morphisms \(f \mapsto f \of (e^{\alpha}_X)^{-1}\).
For degree 1 morphisms \(X \to[f] Y \to[f'] Z\) we have
\begin{equation*}
f' \of^{\bullet} f
= f' \of \alpha^1 f \of (\mu_{1,1}^{\alpha})^{-1}_X
= f' \of \alpha^1 f \of \epsilon^{\alpha}_{\alpha^1 X}
= f' \of \epsilon^{\alpha}_Y \of f
\,,
\end{equation*}
so that \(\two{\nu}_{(\cat{C},\alpha)}^1\) has functoriality given by
\begin{gather*}
\two{\nu}_{(\cat{C},\alpha)} \id[X]^{\bullet}
= \id[X]^{\bullet} \of \epsilon^{\alpha}_X
= (\epsilon^{\alpha}_X)^{-1} \of \epsilon^{\alpha}_X
= \id[X]
= \id[\two{\nu}_{(\cat{C},\alpha)} X]
\\
\two{\nu}_{(\cat{C},\alpha)} (f' \of^{\bullet} f)
= f' \of \epsilon^{\alpha}_Y \of f \of \epsilon^{\alpha}_X
= \two{\nu}_{(\cat{C},\alpha)} f' \of \two{\nu}_{(\cat{C},\alpha)} f
\,.
\end{gather*}
Now \(\id[\alpha^h \alpha]\) gives a natural transformation \(\alpha^h \two{\nu}_{(\cat{C},\alpha)} \tonat \two{\nu}_{(\cat{C},\alpha)} \phi^h\) by
\begin{equation*}
\two{\nu}_{(\cat{C},\alpha)} \phi^h f
= \phi^h f \of \epsilon^{\alpha}_{\alpha^h X}
= \alpha^h f \of \alpha^h \epsilon^{\alpha}_X \of (\epsilon^{\alpha}_{\alpha^h X})^{-1} \of \epsilon^{\alpha}_{\alpha^h X}
= \alpha^h \two{\nu}_{(\cat{C},\alpha)} f \,,
\end{equation*}
and the \(H\)-module equivalence coherence diagrams follow directly from
\begin{gather*}
\two{\nu}_{(\cat{C},\alpha)} \epsilon^{\phi}_X
= \epsilon^{\phi}_X \of \epsilon^{\alpha}_X
= \id[X] \of \epsilon^{\alpha}_X
= \epsilon^{\alpha}_X
\\
\two{\nu}_{(\cat{C},\alpha)} (\mu^{\phi}_{a,b})_X
= (\mu^{\phi}_{a,b})_X \of \epsilon^{\alpha}_{\phi^a \phi^b X}
= (\mu^{\alpha}_{a,b})_X \of (\epsilon^{\alpha}_{\phi^a \phi^b X})^{-1} \epsilon^{\alpha}_{\phi^a \phi^b X}
= (\mu^{\alpha}_{a,b})_X
\,.\qedhere
\end{gather*}
\end{proof}

\begin{lemma}
Let \(\cat{C}\) be an \(H\)-Hom-graded category with shifts denoted \(r_{X,a} \colon X \to X\langle a \rangle\).
Then there is an \(H\)-Hom-graded equivalence \(\two{\eta}_{\cat{C}} \colon (\cat{C}^1)^{\bullet} \to \cat{C}\) mapping objects \(X \mapsto X\) and morphisms \(f \in \Hom_{(\cat{C}^1)^{\bullet}}^h(X,Y) = \Hom_{\cat{C}}^1(X\langle h \rangle, Y)\) to \(f \of r_{X,h}\).
\label{lem:2eqv-eta}
\end{lemma}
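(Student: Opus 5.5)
We prove \cref{lem:2eqv-eta} in three steps: show \(\two{\eta}_{\cat{C}}\) is well-defined and degree-preserving, check functoriality against the twisted composition \(\of^{\bullet}\) of \cref{def:2bullet-0}, and deduce the equivalence from the shift isomorphisms. Throughout, \(\cat{C}^1\) carries the shift functor \(\phi\) of \cref{prop:shiftaction}. So \(\phi^h X = X\langle h \rangle\) and \(\phi^h f = r_{Y,h} \of f \of r_{X,h}^{-1}\). The structure maps are \(\epsilon^{\phi}_X = r_{X,1}\) and \((\mu^{\phi}_{a,b})_X = r_{X,ab} \of r_{X,b}^{-1} \of r_{X\langle b \rangle,a}^{-1}\).

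For \(f \in \Hom^1_{\cat{C}}(X\langle h \rangle, Y)\), the composite \(f \of r_{X,h}\) has degree \(1 \cdot h = h\), so degrees are preserved. Since composition in \(\cat{C}\) is \(R\)-linear, the map is \(R\)-linear on each graded piece. For identities, \(\id[X]^{\bullet} = (\epsilon^{\phi}_X)^{-1} = r_{X,1}^{-1}\), and this is sent to \(r_{X,1}^{-1} \of r_{X,1} = \id[X]\).

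Composition is the main computation. Take \(f \colon X\langle h \rangle \to Y\) and \(f' \colon Y\langle h' \rangle \to Z\) of degree 1. Unwinding,
\begin{equation*}
f' \of^{\bullet} f
= f' \of \phi^{h'} f \of (\mu^{\phi}_{h',h})_X^{-1}
= f' \of r_{Y,h'} \of f \of r_{X\langle h \rangle,h'}^{-1} \of r_{X\langle h \rangle,h'} \of r_{X,h} \of r_{X,h'h}^{-1}
= f' \of r_{Y,h'} \of f \of r_{X,h} \of r_{X,h'h}^{-1} \,.
\end{equation*}
Precomposing with \(r_{X,h'h}\) gives exactly \((f' \of r_{Y,h'}) \of (f \of r_{X,h})\), as required. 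The only difficulty is bookkeeping: the unlabelled arrow conventions must match the explicit form of \(\mu^{\phi}\) from \cref{prop:shiftaction}, and I expect the inner \(r_{X\langle h \rangle,h'}\) terms to cancel as shown.

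For the equivalence, \(\two{\eta}_{\cat{C}}\) is the identity on objects, so it is surjective on objects. On each graded homspace it is the map \((-) \of r_{X,h} \colon \Hom^1_{\cat{C}}(X\langle h \rangle,Y) \to \Hom^h_{\cat{C}}(X,Y)\). This map is an \(R\)-linear isomorphism by the remark following the definition of shifts, with inverse \((-) \of r_{X,h}^{-1}\). Hence \(\two{\eta}_{\cat{C}}\) is fully faithful, and in fact bijective on graded homspaces, so it is an \(H\)-Hom-graded isomorphism of categories and in particular an equivalence. If equivalence in \(\Cat^{H,\shifts}\) is wanted, shifts are preserved automatically: the shift \(r^{\phi}_{X,a} = \id[X\langle a \rangle]\) of \cref{lem:2bullet-shifts} is sent to \(r_{X,a}\).
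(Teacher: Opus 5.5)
Your proposal is correct and follows essentially the same route as the paper: the paper also gives the strict inverse \(f \mapsto f \of r_{X,h}^{-1}\), checks identities via \(\id[X]^{\bullet} = r_{X,1}^{-1}\), and verifies compatibility with \(\of^{\bullet}\) by unwinding \(\phi^{h'} f\) and \(\mu^{\phi}_{h',h}\). The only difference is presentational: the paper does the composition check with a commutative diagram, whereas you cancel the \(r_{X\langle h \rangle,h'}\) terms algebraically.
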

\begin{proof}
A strict inverse can be given mapping morphisms \(f \mapsto f \of r_{X,h}^{-1}\).
Functoriality is by
\begin{equation*}
\two{\eta}_{\cat{C}} \id[X]^{\bullet}
= \id[X]^{\bullet} \of r_{X,1}
= \epsilon_X^{-1} \of e_X
= \id[X]
\end{equation*}
and the commutative diagram
\begin{equation*}
\begin{tikzcd}
X
\ar[dr] \ar[dd] \ar[rr,"\two{\eta}_{\cat{C}}(f' \of^{\bullet} f)"]
\ar[ddd,bend right=6em,swap,"\two{\eta}_{\cat{C}} f"]
&& Z
\\[-1.5em]
& \phi^{h'h} X
\ar[d,"(\mu^{\phi}_{h',h})_X"] \ar[ur,swap,near start,"f' \of^{\bullet} f"]
\\
\phi^h X
\ar[r] \ar[d,swap,"f"]
& \phi^{h'} \phi^h X
\ar[r,swap,"\phi^{h'} f"]
& \phi^{h'} Y
\ar[uu,swap,"f'"]
\\
Y
\ar[rr,"\id"]
&& Y
\ar[u] \ar[uuu,bend right=6em,swap,"\two{\eta}_{\cat{C}} f'"]
\,.
\end{tikzcd}
\end{equation*}
\end{proof}

\begin{theorem}
The functors \((-)^1 \colon \Cat^{H,\shifts} \rightleftarrows \ModCat[H] \colon (-)^{\bullet}\) form a 2-equivalence.
\label{thm:2eqv-H}
\end{theorem}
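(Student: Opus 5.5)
We use the components \(\two{\nu}_{(\cat{C},\alpha)}\) of \cref{lem:2eqv-nu} and \(\two{\eta}_{\cat{C}}\) of \cref{lem:2eqv-eta}. Both are equivalences (in fact isomorphisms, since we wrote down strict inverses) in \(\ModCat[H]\) and \(\Cat^{H,\shifts}\) respectively. It therefore suffices to show that they assemble into 2-natural isomorphisms
\begin{equation*}
\two{\nu} \colon ((-)^{\bullet})^1 \to \iid[\ModCat[H]]
\qquad
\two{\eta} \colon ((-)^1)^{\bullet} \to \iid[\Cat^{H,\shifts}]
\,.
\end{equation*}
The plan has three parts: verify 1-naturality on 1-morphisms, verify compatibility with 2-morphisms, and conclude. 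Because each component is invertible and 1-naturality holds strictly, this gives a 2-equivalence (indeed a 2-isomorphism of 2-categories up to these strict isomorphisms).

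For \(\two{\eta}\), fix an \(H\)-Hom-graded functor \(F \colon \cat{C} \to \cat{D}\) between categories with shifts. We must check the equality \(F \of \two{\eta}_{\cat{C}} = \two{\eta}_{\cat{D}} \of (F^1,s_F)^{\bullet}\). On objects both sides are \(X \mapsto FX\). On a morphism \(f \in \Hom_{\cat{C}}^1(X\langle h \rangle, Y)\), the left side gives \(Ff \of Fr_{X,h}\). The right side gives
\begin{equation*}
Ff \of (s_F^h)_X \of r_{FX,h} = Ff \of Fr_{X,h} \of r_{FX,h}^{-1} \of r_{FX,h} \,,
\end{equation*}
which is the same. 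For a 2-morphism \(\theta \colon F \tonat F'\), we compare the whiskerings \(\theta \hof \two{\eta}_{\cat{C}}\) and \(\two{\eta}_{\cat{D}} \hof (\theta^1)^{\bullet}\). The component \((\theta^1)^{\bullet}_X = \theta_X \of \epsilon^{-1}_{FX} = \theta_X \of r_{FX,1}^{-1}\), and applying \(\two{\eta}_{\cat{D}}\) precomposes with \(r_{FX,1}\), returning \(\theta_X\).

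For \(\two{\nu}\), fix an \(H\)-module functor \((F,s) \colon (\cat{C},\alpha) \to (\cat{D},\beta)\). The functor \(((F,s)^{\bullet})^1\) sends a morphism \(f \in \Hom_{\cat{C}}(\alpha^1 X,Y)\) to \(Ff \of s^1_X\). Applying \(\two{\nu}\) gives \(Ff \of s^1_X \of \epsilon^{\beta}_{FX} = Ff \of F\epsilon^{\alpha}_X\), using the unit coherence of \((F,s)\); this equals \(F(\two{\nu} f)\). We must also match the module structures. The structure of \(((F,s)^{\bullet})^1\) is \(s_{F^{\bullet}}\) from \cref{lem:shiftaction-functor}, namely \(F^{\bullet} r^{\alpha}_{X,h} \of^{\bullet} (r^{\beta}_{FX,h})^{-1}\). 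Translating this through \(\two{\nu}_{(\cat{D},\beta)}\) should recover \(s^h_X\), since \(r^{\alpha}\) and \(r^{\beta}\) are identities and the remaining \(\mu\) and \(\epsilon\) terms cancel by the coherence axioms, as in the earlier lemma computing \(\phi\) and \(\mu^{\phi}\). For 2-morphisms \(\eta\), both sides reduce to \(\eta_X\) after cancelling \(\epsilon^{\beta}\).

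The main obstacle is this last check, the \(H\)-module structure comparison for \(\two{\nu}\). It requires unwinding \(\of^{\bullet}\) with inverse shift morphisms \(\epsilon^{-1}\of\mu_{h^{-1},h}\), and it is diagrammatically the messiest step. I would handle it by a commutative diagram analogous to the one computing \(\phi^h f\), using the module-functor coherence for \(s^{h^{-1}}\) and \(s^{h}\) to absorb the \(\mu_{h^{-1},h}\) terms.
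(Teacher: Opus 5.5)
Your proposal is correct and follows the paper's proof: the same components \(\two{\nu}\) and \(\two{\eta}\), and the same 1-naturality computations on the underlying functors. Your 2-cell checks and your module-structure comparison fill in steps the paper leaves implicit.

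For that comparison, note that \((F,s)^{\bullet} r^{\alpha}_{X,h} = s^h_X\) exactly. After applying \(\two{\nu}_{(\cat{D},\beta)}\), the leftover factor \(\beta^h (\epsilon^{\beta}_{FX})^{-1} \circ \beta^h (\mu^{\beta}_{h^{-1},h})_{FX} \circ (\mu^{\beta}_{h,h^{-1}})^{-1}_{\beta^h FX} \circ \epsilon^{\beta}_{\beta^h FX}\) is the identity by the unit and associativity axioms for \(\beta\) alone, so you do not need module-functor coherence for \(s^{h^{-1}}\).
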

\begin{proof}
We show that \(\two{\nu}_{(\cat{C},\alpha)}\) and \(\two{\eta}_{\cat{C}}\) provide the components for 2-natural isomorphisms
\begin{equation*}
\two{\nu} \colon ((-)^{\bullet})^1 \tonat \iid[\ModCat[H]] \qquad
\two{\eta} \colon ((-)^1)^{\bullet} \tonat \iid[\Cat^{H,\shifts}] \,.
\end{equation*}
We have already seen that the components are invertible.

First, let \((F,s) \colon (\cat{C},\alpha) \to (\cat{D},\beta)\) be an \(H\)-module functor.
Write \(\phi\) and \(\psi\) respectively for the induced shift functors on \((\cat{C},\alpha)^{\bullet}\) and \((\cat{D},\beta)^{\bullet}\).
By \cref{lem:shiftaction-functor,lem:2bullet-1}, \(((F,s)^{\bullet})^1\) maps objects \(X \mapsto FX\) and morphisms \(f \in \Hom_{((\cat{C},\alpha)^{\bullet})^1}(X,Y) = \Hom_{(\cat{C},\alpha)}(\alpha^1 X, Y)\) to \(Ff \of s^h_X\).
Recall from \cref{lem:2eqv-nu} the \(H\)-module equivalences \(\two{\nu}\); coherence for \(s\) gives
\begin{gather*}
\big( \two{\nu}_{(\cat{D},\beta)} \of ((F,s)^{\bullet})^1 \big) X
= FX
= \big( (F,s) \of \two{\nu}_{(\cat{C},\alpha)} \big) X
\\
\big( \two{\nu}_{(\cat{D},\beta)} \of ((F,s)^{\bullet})^1 \big) f
= Ff \of s^1_X \of e^{\alpha}_{FX}
= F(f \of \epsilon^{\alpha}_X)
= \big( (F,s) \of \two{\nu}_{(\cat{C},\alpha)} \big) f
\end{gather*}
so that \(\two{\nu}_{(\cat{C},\alpha)}\) is natural in \((\cat{C},\alpha)\).

Now let \(F \colon \cat{C} \to \cat{D}\) be an \(H\)-Hom-graded functor.
Suppose \(\cat{C}\) has shifts denoted \(r_{X,a} \colon X \to \phi^a X\) and \(\cat{D}\) has shifts denoted \(s_{Y,a} \colon Y \to \psi^a Y\).
Recall from \cref{lem:2eqv-eta} the \(H\)-Hom-graded equivalences \(\two{\eta}\); we have
\begin{gather*}
(\two{\eta}_{\cat{D}} \of (F^1)^{\bullet}) X
= \two{\eta}_{\cat{D}} (FX)
= FX
= (F \of \two{\eta}_{\cat{C}}) X
\\
(\two{\eta}_{\cat{D}} \of (F^1)^{\bullet}) f
= \two{\eta}_{\cat{D}} (F^1 f \of (s_F^h)_X)
= \big( Ff \of Fr^{\phi}_{X,h} \of (r^{\psi}_{FX,h})^{-1} \big) \of r^{\psi}_{FX,h}
= (F \of \two{\eta}_{\cat{C}}) f
\end{gather*}
so that \(\two{\eta}_{\cat{C}}\) is natural in \(\cat{C}\).
\end{proof}
\subsection{2-equivalence between \texorpdfstring{\(\Cat_{\tau}^{\shifts}\)}{Catτ<>} and \texorpdfstring{\(\ModCat[H]_{\tau}\)}{H-ModCatτ}}
\label{sec:2eqv2}
We obtain this final 2-equivalence as a restriction of \cref{thm:2eqv-H}, but first we need to identify the extension of \(\ModCat[H]\) corresponding to \(\Cat_{\tau}^{\shifts} \supseteq \Cat^{H,\shifts}\).

\begin{definition}
A \defined{\(\tau\)-module category} is an \(H\)-module category \((\cat{C},\alpha)\) such that
\begin{enumerate}
\item \(\cat{C}\) is \(G\)-graded with \(\cat{C} = \bigboxplus_{g \in G} \cat{C}_g\)
\item we have \(\alpha^h X \in \cat{C}_{\tau(h)g}\) for all \(h \in H\), \(g \in G\) and \(X \in \cat{C}_g\).
\end{enumerate}
We write \(\ModCat[H]_{\tau}\) for the 2-category of \(\tau\)-module categories, where
\begin{itemize}
\item \(\tau\)-module functors are \(H\)-module functors \((F,s)\) satisfying \(|FX|=|X|\) for all homogeneous objects \(X\)
\item \(\tau\)-module natural transformations are just \(H\)-module natural transformations between \(\tau\)-module functors.
\end{itemize}
\end{definition}

Writing \(\PSFun(\ccat{C},\ccat{D})\) for 2-categories of pseudofunctors as in \cite{jy2021} and recalling the groupoid \(\cat{G}_{\tau}\) from \cref{eg:tgrad:grpd}, a direct comparison of data gives
\begin{equation*}
\ModCat[H]_{\tau}
= \PSFun(\Disc(\cat{G}_{\tau}), \Cat)
\,.
\end{equation*}
When \(\tau = \chi\) is a crossed module, \(\cat{G}_{\chi}\) becomes a 2-group.
There is a standard notion of 2-group representation (see for example \cite{elgueta2007}), but this \textbf{does not} match the objects of \(\ModCat[H]_{\chi}\).
Representations of 2-groups are instead pseudofunctors from the delooping of \(\cat{G}_{\chi}\), and might be denoted
\begin{equation*}
\ModCat[\cat{G}_{\chi}]
= \PSFun(\deloop \cat{G}_{\chi}, \Cat)
\,.
\end{equation*}
In particular, \(\Disc(\cat{G}_{\chi})\) has objects from \(G\) and 1-morphisms from \(H\), whereas \(\deloop \cat{G}_{\chi}\) has 1-morphisms from \(G\) and 2-morphisms from \(H\).

\begin{theorem}
The 2-equivalence \((-)^1 \colon \Cat^{H,\shifts} \rightleftarrows \ModCat[H] \colon (-)^{\bullet}\) from \cref{thm:2eqv-H} extends to a 2-equivalence \(\Cat_{\tau}^{\shifts} \eqv \ModCat[H]_{\tau}\).
\label{thm:final2eqv}
\end{theorem}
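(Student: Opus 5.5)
The plan is to show that the two 2-functors \((-)^1\) and \((-)^{\bullet}\) of \cref{prop:first2fun} and \cref{sec:2fun2} restrict to 2-functors between \(\Cat_{\tau}^{\shifts}\) and \(\ModCat[H]_{\tau}\). Once that is done, the components \(\two{\nu}\) and \(\two{\eta}\) of \cref{thm:2eqv-H} must be checked to be \(\tau\)-module and \(\tau\)-graded respectively. Since \(\Cat_{\tau}^{\shifts}\) sits inside the 2-category of \(H\)-Hom-graded categories with shifts (forgetting object degrees), and \(\ModCat[H]_{\tau}\) sits inside \(\ModCat[H]\), all the coherence and 2-naturality computations from \cref{thm:2eqv-H} carry over verbatim. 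Only the extra grading conditions need checking.

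\emph{Forward direction.} Let \(\cat{C}\in\Cat_{\tau}^{\shifts}\). Then \(\cat{C}^1\) is \(G\)-graded by \cref{def:tgrad}, and decomposing objects gives \(\cat{C}^1\eqv\bigboxplus_{g}\cat{C}_g\). Degree-1 morphisms between different components vanish by condition (3) of \cref{def:tgrad} with \(h=1\). By \cref{rem:shiftdegree}, a homogeneous \(X\in\cat{C}_g\) has \(\phi^hX=X\langle h\rangle\) of degree \(\tau(h)g\). On non-homogeneous objects I would choose the shifts componentwise, which is harmless because shift functors are unique up to isomorphism. A \(\tau\)-graded functor preserves object degrees, so \((F^1,s_F)\) is a \(\tau\)-module functor. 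Nothing needs checking for 2-cells.

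\emph{Backward direction.} Let \((\cat{C},\alpha)\) be a \(\tau\)-module category. Give \((\cat{C},\alpha)^{\bullet}\) the object grading of \(\cat{C}\); the degree-1 part \(\Hom_{\cat{C}}(\alpha^1X,Y)\) is then \(G\)-graded because \(\alpha^1X\) lies in the same component as \(X\). For \(X\in\cat{C}_x\) and \(Y\in\cat{C}_y\), we have \(\Hom^h(X,Y)=\Hom_{\cat{C}}(\alpha^hX,Y)\) with \(\alpha^hX\in\cat{C}_{\tau(h)x}\). Since \(\cat{C}=\bigboxplus_g\cat{C}_g\) has no morphisms between distinct components, this is zero unless \(y=\tau(h)x\), which is condition (3). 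The functor \((F,s)^{\bullet}\) acts as \(F\) on objects, so it preserves degrees. The components of \(\eta^{\bullet}\) lie in \(\Hom^1\), so it is \(\tau\)-graded. The shifts of \cref{lem:2bullet-shifts} exist, so the result lies in \(\Cat_{\tau}^{\shifts}\).

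\emph{Units.} The maps \(\two{\nu}_{(\cat{C},\alpha)}\) and \(\two{\eta}_{\cat{C}}\) are the identity on objects, so they preserve degrees. Their strict inverses are also identity on objects, hence \(\tau\)-graded and \(\tau\)-module equivalences. The 2-naturality equations are identical to those in \cref{thm:2eqv-H}.

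The main obstacle I expect is the \(G\)-grading bookkeeping. \(\tau\)-graded categories are only required to have \(\cat{C}^1\) \(G\)-graded in the weak sense of objects decomposing as direct sums. A \(\tau\)-module category, by contrast, is literally \(\bigboxplus_g\cat{C}_g\). If \(\cat{C}^1\) lacks direct sums, the identification must be made at the level of homogeneous objects. I would first define \((\cat{C}^1)_g\) as the full subcategory of degree-\(g\) homogeneous objects. I would then check that the relevant 2-categories only see this data, for example by restricting to, or replacing \(\cat{C}\) with, an equivalent category in which objects are formal finite lists of homogeneous ones. This equivalence is compatible with shifts by the remark on shifts of direct sums, so it preserves \(\Cat_{\tau}^{\shifts}\) up to 2-equivalence.
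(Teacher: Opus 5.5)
Your route is the paper's: the 2-functors of \cref{thm:2eqv-H} restrict, and only the extra degree conditions need checking. These are handled by \cref{rem:shiftdegree} for \((-)^1\) and by the vanishing of \(\Hom_{\cat{C}}(\alpha^h X,Y)\) between components for \((-)^{\bullet}\). Your check that \(\two{\nu}\), \(\two{\eta}\) and their inverses are the identity on objects makes explicit a point the paper leaves unsaid.

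The one thing to correct is the claim that decomposing objects gives \(\cat{C}^1 \eqv \bigboxplus_g \cat{C}_g\), along with the proposed replacement of \(\cat{C}\) by formal finite lists of homogeneous objects. That list category contains \(\cat{C}^1\) fully faithfully. However, it is equivalent to \(\cat{C}^1\) only if \(\cat{C}^1\) already has sums of objects of different degrees. This really does fail inside \(\Cat_{\tau}^{\shifts}\). For \(G \neq 1\), \(R\cat{G}_{\tau}\) has shifts by \cref{eg:shifts:grpd}, but \((R\cat{G}_{\tau})^1\) has no biproduct of two objects of different degrees, because degree 1 morphisms between distinct objects vanish. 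By contrast, a literal \(\bigboxplus_g \cat{C}_g\) related to it by a degree-preserving \(R\)-linear equivalence would contain such a biproduct, and equivalences reflect biproducts. So the condition ``\(\cat{C} = \bigboxplus_g \cat{C}_g\)'' in the definition of \(\tau\)-module category cannot be meant literally, and no replacement of \(\cat{C}\) rescues that reading. Your other suggestion, working only with the full subcategories of homogeneous objects, is the right one. It is what the paper's one-line ``\(\cat{C}^1\) inherits object degrees'' implicitly does: read that condition in the same weak sense as the \(G\)-grading of \cref{def:tgrad}, meaning homogeneous components with no morphisms between them and every object a finite sum of homogeneous ones. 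Under that reading your forward direction needs no equivalence at all, and the rest of your argument goes through unchanged.
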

\begin{proof}
Suppose \(\cat{C}\) is a \(\tau\)-graded category with shifts.
The \(H\)-module category \(\cat{C}^1\) inherits object degrees from \(\cat{C}\).
Since the \(H\)-action is precisely the shift functor, \(\cat{C}^1\) is an \(H\)-module category by \cref{rem:shiftdegree}.

In the other direction, suppose now that \((\cat{C},\alpha)\) is a \(\tau\)-module category.
The \(H\)-Hom-graded category \((\cat{C},\alpha)^{\bullet}\) again inherits object degrees from \((\cat{C},\alpha)\).
For homogeneous \(X\) and \(Y\) we have \(|\alpha^h X| = \tau(h) |X|\) so that
\begin{equation*}
\Hom_{(\cat{C},\alpha)^{\bullet}}^h(X,Y)
= \Hom_{(\cat{C},\alpha)}(\alpha^h X,Y)
= 0
\end{equation*}
unless \(|Y| = \tau(h) |X|\).

If \(F\) is a \(\tau\)-graded functor then we have \(|FX|=|X|\) automatically.
If \((F,s)\) is a \(\tau\)-module functor then we also have \(|(F,s)^{\bullet} X| = |FX| = |X|\).
There are no extra conditions on 2-morphisms.
\end{proof}

\begin{remark}
When \(G=1\), the indecomposable semisimple \(H\)-Hom-graded category \(\cat{M}_{\tau}(L,\psi)\) (from \cref{def:mtau}) corresponds under this 2-equivalence to the indecomposable \(H\)-module category in \cite[, Example 7.4.10]{egno2015}.
This is the precise sense in which \cref{thm:tgrad:struct} generalises the structure theorem for \(H\)-module categories.
\end{remark}
\section*{Acknowledgments}
\label{sec:orgb99c3a6}
\addcontentsline{toc}{section}{Acknowledgments}

This work was completed towards the author's PhD project, supported by a School Scholarship at the University of Nottingham, School of Mathematical Sciences.
The author wishes to thank Robert Laugwitz for his supervision and guidance.
\label{sec:orgffc4e4b}

\printbibliography[heading=bibintoc]
\end{document}